\theoremstyle{definition}
\newtheorem{theorem}{Theorem}
\newtheorem{lemma}{Lemma}
\newtheorem{proposition}{Proposition}
\newtheorem{corollary}{Corollary}
\newtheorem{subclaim}{Subclaim}
\tikzset{
modal/.style={>=stealth,shorten >=1pt,shorten <=1pt,auto,node distance=1.5cm,
semithick},
world/.style={circle,draw,minimum size=0.5cm,fill=gray!15},
point/.style={circle,draw,inner sep=0.5mm,fill=black},
reflexive above/.style={->,loop,looseness=20,in=120,out=60},
reflexive below/.style={->,loop,looseness=20,in=240,out=300},
reflexive left/.style={->,loop,looseness=20,in=150,out=210},
reflexive right/.style={->,loop,looseness=20,in=30,out=330}
}
\newenvironment{subproof}[1][\proofname]{%
  \begin{proof}[#1]%
}{%
  \end{proof}%
}
\title{Binary Kripke Semantics for a Strong Logic for Naive Truth} 
\author{Ben Middleton \\ University of Notre Dame}
\date{}	
\begin{document}
\maketitle

\begin{abstract}
I show that the logic $\textsf{TJK}^{d+}$, one of the strongest logics currently known to support the naive theory of truth, is obtained from the Kripke semantics for constant domain intuitionistic logic by (i) dropping the requirement that the accessibility relation is reflexive and (ii) only allowing reflexive worlds to serve as counterexamples to logical consequence. In addition, I provide a simplified natural deduction system for $\textsf{TJK}^{d+}$, in which a restricted form of conditional proof is used to establish conditionals.

\paragraph{Keywords:} naive truth; TJK; completeness; disjunction property; existence property; subintuitionistic; binary Kripke semantics; natural deduction; conditional proof.
\end{abstract}

\section{Introduction}

Let $\mathcal{L}_T$ be the first-order language with primitive operators $\top$, $\bot$, $\wedge$, $\vee$, $\rightarrow$, $\forall$, $\exists$ whose signature consists of a binary relation symbol $=$ for identity, a unary relation symbol $T$ for truth, a constant symbol $0$ for the number zero and an $n$-ary function symbol $f_e$ for the $n$-ary primitive recursive function with index $e$. We identify a linguistic object with its G{\"o}del code. In the absence of a primitive negation operator, the naive theory of truth (\textsf{NT}) can be identified with the following set of $\mathcal{L}_T$-sentences:
\begin{align*}
\text{A1. } &\forall x \hspace{1mm} x = x \\
\text{A2. } &\forall x \forall y(x = y \rightarrow y = x) \\
\text{A3. } &\forall x \forall y \forall z(x = y \wedge y = z \rightarrow x = z) \\
\text{I$e$. } &\forall \overline{x} \forall \overline{y}(\textstyle\bigwedge_i x_i = y_i \rightarrow f_e(\overline{x}) = f_e(\overline{y})) \\
\text{A4. } &\forall x \forall y(x = y \wedge T(x) \rightarrow T(y)) \\
\text{A5. } &\forall x(s(x) = 0 \rightarrow \bot) \\
\text{A6. } &\forall x \forall y(s(x) = s(y) \rightarrow x = y) \\
\text{D$e$. } &\text{the definition of } f_e \text{ for } f_e \neq s \\
\text{Ind. } &\forall \overline{x}[\forall y(\phi(y, \overline{x}) \rightarrow \phi(s(y), \overline{x})) \rightarrow (\phi(0, \overline{x}) \rightarrow \forall y \phi(y, \overline{x}))] \\
\text{TB. } &T\ulcorner \phi \urcorner \leftrightarrow \phi
\end{align*}
where $\ulcorner \phi \urcorner$ is the numeral for $\phi$ and $\phi \leftrightarrow \psi$ abbreviates $(\phi \rightarrow \psi) \wedge (\psi \rightarrow \phi)$.\begin{footnote}{Note, in particular, the form of the induction schema. The more usual formulation $\forall \overline{x}[\phi(0, \overline{x}) \wedge \forall y(\phi(y, \overline{x}) \rightarrow \phi(s(y), \overline{x})) \rightarrow \forall y \phi(y, \overline{x})]$ is false in the standard model for the closure of $\textsf{NT}$ under $\textsf{TJK}^{d+}$ (see the appendix of this paper).}\end{footnote} Closing $\textsf{NT}$ under classical first-order logic (\textsf{CQL}) results in the trivial theory. So if we want to accept \textsf{NT} without being committed to \textit{everything}, we need to weaken \textsf{CQL}. Even if we weaken \textsf{CQL} to the point where \textsf{NT} becomes non-trivial, \textsf{NT} might still behave undesirably in other respects. Most obviously, \textsf{NT} might be $\omega$-inconsistent, in the sense that either (i) $\textsf{NT}$ implies $\phi(\dot{n})$ for every $n$ but $\textsf{NT} \cup \{\forall v \phi\}$ explodes or (ii) $\textsf{NT}$ implies $\exists v \phi$ but $\textsf{NT} \cup \{\phi(\dot{n})\}$ explodes for every $n$. In this case, we cannot interpret the quantifiers in \textsf{NT} as restricted to $\omega$. Say that a subclassical logic \textit{supports} \textsf{NT} iff \textsf{NT} is $\omega$-consistent in the logic (and hence non-trivial). One of the strongest logics currently known to support \textsf{NT} is the logic $\textsf{TJK}^{d+}$, which is obtained from positive basic relevant logic with $\exists$-Elim ($\textsf{B}^{d+}$) by adding the following axioms for $\rightarrow$:
$$
(\phi \rightarrow \psi) \wedge (\psi \rightarrow \chi) \rightarrow (\phi \rightarrow \chi)
\qquad
\phi \rightarrow (\psi \rightarrow \phi)
$$
$$
(\phi \rightarrow \psi) \rightarrow ((\chi \rightarrow \phi) \rightarrow (\chi \rightarrow \psi))
\qquad
(\phi \rightarrow \psi) \rightarrow ((\psi \rightarrow \chi) \rightarrow (\phi \rightarrow \chi)).
$$
\noindent In this paper, I accomplish two tasks. First, I show that $\textsf{TJK}^{d+}$ can be given a simplified natural deduction system, in which a restricted form of conditional proof is used to establish conditionals. Second, I show that $\textsf{TJK}^{d+}$ is exactly the logic obtained from the Kripke semantics for constant domain intuitionistic logic by (i) dropping the requirement that the accessibility relation is reflexive and (ii) only allowing reflexive worlds to serve as counterexamples to logical consequence.

\section{The relevant hierarchy} 

In this section, I give an overview of the hierarchy of logics obtained from $\textsf{B}^{d+}$ by adding axioms for $\rightarrow$.\begin{footnote}{The definitions of $\textsf{B}^{d+}$, $\textsf{DJ}^{d+}$ and $\textsf{TJ}^{d+}$ are taken from Brady (1984).}\end{footnote} Let $\mathcal{L}$ be an arbitrary first-order language and let $\mathcal{L}^+ = \mathcal{L} \cup \{a_i: i \in \omega\}$, where each $a_i$ is a fresh constant symbol (the $a_i$ serve in proofs as names of arbitrarily chosen objects). $\textsf{B}^{d+}$ is axiomatized by the following natural deduction system over $\mathcal{L}^+$, where 
\begin{enumerate}[(C1)]
\item only sentences may occur in proofs,\begin{footnote}{This requirement determines which free variables, if any, the subformulas of an inference rule may contain (e.g.\ $\phi$ may not contain free variables in CD).}\end{footnote}
\item $a_i$ may not occur in $\phi$ or in any open assumption in the main subproof of $\forall$-Int,
\item $a_i$ may not occur in $\phi$, $\psi$ or in any open assumption besides $\phi(a_i)$ in the right main subproof of $\exists$-Elim (we refer to C2 and C3 as the \textit{eigenvariable constraints}),
\item all open occurrences of $\phi(a_i)$ in the right main subproof of $\exists$-Elim must be discharged (for the remaining inference rules, an arbitrary number of open occurrences --- including zero --- of the assumption in square brackets may be discharged from the relevant subproof).

\

\end{enumerate}
$$
[\top]\hspace{2mm}(\top\text{-Int})
\qquad
\infer[(\bot\text{-Elim})]
	{\phi}
	{\bot}
$$
$$
\infer[(\wedge\text{-Int})]
	{\phi \wedge \psi}
	{
	\phi
	&
	\psi
	}
\qquad
\infer[(\wedge\text{-Elim})]
	{\phi / \psi}
	{\phi \wedge \psi}
$$
$$
\infer[(\vee\text{-Int})]
	{\phi \vee \psi}
	{\phi / \psi}
\qquad
\infer[(\vee\text{-Elim})]
	{\chi}
	{
	\phi \vee \psi
	&
	\infer*
		{\chi}
		{[\phi]}
	&
	\infer*
		{\chi}
		{[\psi]}
	}
$$
$$
\infer[(\rightarrow\hspace{-1mm}\text{-Elim})]
	{\psi}
	{
	\phi
	&
	\phi \rightarrow \psi
	}
$$
\begin{framed}
$$
[\phi \rightarrow \phi]
$$
$$
[\phi \rightarrow \top]
\qquad
[\bot \rightarrow \phi]
$$
$$
[(\chi \rightarrow \phi) \wedge (\chi \rightarrow \psi) \rightarrow (\chi \rightarrow \phi \wedge \psi)]
\qquad
[\phi \wedge \psi \rightarrow \phi/\psi]
$$
$$
[\phi/\psi \rightarrow \phi \vee \psi]
\qquad
[(\phi \rightarrow \chi) \wedge (\psi \rightarrow \chi) \rightarrow (\phi \vee \psi \rightarrow \chi)] 
$$
$$
[\phi \wedge (\psi \vee \chi) \rightarrow (\phi \wedge \psi) \vee (\phi \wedge \chi)]
$$
$$
[\forall v(\phi \rightarrow \psi) \rightarrow (\phi \rightarrow \forall v \psi)]
\qquad
[\forall v \phi \rightarrow \phi(t)]
$$
$$
[\phi(t) \rightarrow \exists v \phi]
\qquad
[\forall v(\phi \rightarrow \psi) \rightarrow (\exists v \phi \rightarrow \psi)]
$$
$$
[\forall v(\phi \vee \psi) \rightarrow \phi \vee \forall v \psi]
\qquad
[\phi \wedge \exists v \psi \rightarrow \exists v(\phi \wedge \psi)]
$$
$$
\infer
	{(\psi \rightarrow \chi) \rightarrow (\phi \rightarrow \gamma)}
	{
	\phi \rightarrow \psi
	&
	\chi \rightarrow \gamma
	}
$$ 
\end{framed}
$$
\infer[(\forall\text{-Int})]
	{\forall v \phi}
	{\phi(a_i)}
\qquad
\infer[(\forall\text{-Elim})]
	{\phi(t)}
	{\forall v \phi}
\qquad
\infer[(\text{CD})]
	{\phi \vee \forall v \psi}
	{\forall v (\phi \vee \psi)}
$$
$$
\infer[(\exists\text{-Int})]
	{\exists v \phi}
	{\phi(t)}
\qquad
\infer[(\exists\text{-Elim})]
	{\psi}
	{
	\exists v \phi
	&
	\infer*
		{\psi}
		{[\phi(a_i)]}
	}
$$
Although this natural deduction system contains some redundancies, it is useful to view $\textsf{B}^{d+}$ as the result of deleting $\rightarrow$-Int from the natural deduction system for constant domain intuitionistic logic ($\textsf{IQL}_\textsf{CD}$) and replacing it with the boxed rules. Since $\textsf{NT}$ explodes in $\textsf{IQL}_\textsf{CD}$, the basic goal of naive truth theory, in the absence of a primitive negation operator, is to discover how close to the full strength of $\rightarrow$-Int we can get before reaching $\omega$-inconsistency. $\textsf{B}^{d+}$ is the positive fragment of the logic for naive truth theory endorsed by Beall (2009).

\subsection{$\textsf{DJ}^{d+}$}

We obtain the logic $\textsf{DJ}^{d+}$ by adding the transitivity axiom to the natural deduction system for $\textsf{B}^{d+}$:
$$
[(\phi \rightarrow \psi) \wedge (\psi \rightarrow \chi) \rightarrow (\phi \rightarrow \chi)].
$$
\noindent $\textsf{DJ}^{d+}$ is the positive fragment of the logic for naive truth theory endorsed by Brady (2006).

\subsection{$\textsf{TJ}^{d+}$}

We obtain the logic $\textsf{TJ}^{d+}$ by adding the suffixing and prefixing axioms to the natural deduction system for $\textsf{DJ}^{d+}$:
$$
[(\phi \rightarrow \psi) \rightarrow ((\psi \rightarrow \chi) \rightarrow (\phi \rightarrow \chi))]
\qquad
[(\phi \rightarrow \psi) \rightarrow ((\chi \rightarrow \phi) \rightarrow (\chi \rightarrow \psi))].
$$

\subsection{$\textsf{TJK}^{d+}$}

We obtain the logic $\textsf{TJK}^{d+}$ by adding the weakening axiom to the natural deduction system for $\textsf{TJ}^{d+}$:
$$
[\phi \rightarrow (\psi \rightarrow \phi)].
$$
\noindent The logic $\textsf{TJK}^+$ is obtained from the natural deduction system for $\textsf{TJK}^{d+}$ by deleting $\exists$-Elim. In fact, we show in \S4.2 that $\textsf{TJK}^{d+} = \textsf{TJK}^+$, so $\exists$-Elim is redundant in $\textsf{TJK}^{d+}$. It was shown by Bacon (2013a) that if $\textsf{NT}$ proves $\phi(\dot{n})$ in $\textsf{TJK}^+$ for every $n$ then $\textsf{NT} \cup \{\forall v \phi\}$ does not explode in $\textsf{TJK}^+$. This is weaker than full $\omega$-consistency (as defined in the introduction), since it remains possible that $\textsf{NT}$ proves $\exists v \phi$ even though $\textsf{NT} \cup \{\phi(\dot{n})\}$ explodes for every $n$. However, it was recently shown by Field, Lederman and {\O}gaard (2017) that $\textsf{TJK}^{d+}$ (and hence $\textsf{TJK}^+$) satisfies full $\omega$-consistency. This result was achieved by, in effect, building a standard model for the closure of $\textsf{NT}$ under $\textsf{TJK}^{d+}$. However, since Field-Lederman-{\O}gaard were working in the context of naive set theory, it is useful to see the standard model constructed explicitly for $\textsf{NT}$. I have therefore included the explicit construction in the appendix (and also give the construction in the framework of the binary Kripke semantics defined in the next section).

\section{Binary Kripke semantics for $\textsf{TJK}^{d+}$}

In this section, I introduce the binary Kripke semantics for $\textsf{TJK}^{d+}$. An $\mathcal{L}$-model is a $4$-tuple $\mathfrak{M} = \langle W, \prec, M, |\mathord{\cdot}| \rangle$ such that $W$ is a non-empty set (the set of worlds), $\prec$ is a transitive binary relation on $W$ (the accessibility relation), $M$ is a non-empty set (the domain of quantification) and $|\mathord{\cdot}|$ is a function (the interpretation function) whose domain is the signature of $\mathcal{L}$ such that $|c| \in M$, $|f^n|: M^n \rightarrow M$ and $|R^n|: W \rightarrow \mathcal{P}(M^n)$, subject to the persistence constraint that $w \prec u$ only if $|R^n|(w) \subseteq |R^n|(u)$. For a term $t(\overline{v}) \in \mathcal{L}$ and $\overline{a} \in M^n$, the denotation function $|t|(\overline{a})$ is defined recursively as follows:
\begin{align*}
|c|(\overline{a}) &= |c| \\
|v_i|(\overline{a}) &= a_i \\
|f^n(t_1,...,t_n)|(\overline{a}) &= |f^n|(|t_1|(\overline{a}),...,|t_n|(\overline{a})).
\end{align*}
For a formula $\phi(\overline{v}) \in \mathcal{L}, \overline{a} \in M^n$ and $w \in W$, the satisfaction relation $\mathfrak{M}, w \Vdash \phi(\overline{a})$ is defined recursively as follows (suppressing $\mathfrak{M}$ for brevity):
\begin{align*}
w &\Vdash \top(\overline{a}) \\
w &\not\Vdash \bot(\overline{a}) \\
w \Vdash R^n(t_1,...,t_n)(\overline{a}) &\iff \langle |t_1|(\overline{a}),...,|t_n|(\overline{a}) \rangle \in |R^n|(w) \\
w \Vdash (\phi \wedge \psi)(\overline{a}) &\iff w \Vdash \phi(\overline{a}) \text{ and } w \Vdash \psi(\overline{a}) \\
w \Vdash (\phi \vee \psi)(\overline{a}) &\iff w \Vdash \phi(\overline{a}) \text{ or } w \Vdash \psi(\overline{a}) \\
w \Vdash (\phi \rightarrow \psi)(\overline{a}) &\iff \text{for all } u \succ w: \text{if } u \Vdash \phi(\overline{a}) \text{ then } u \Vdash \psi(\overline{a}) \\
w \Vdash \exists v \phi(\overline{a}) &\iff \text{for some } b \in M: w \Vdash \phi(\overline{a}, b)  \\
w \Vdash \forall v \phi(\overline{a}) &\iff \text{for all } b \in M: w \Vdash \phi(\overline{a}, b). 
\end{align*}

\begin{theorem}[Persistence] If $w \Vdash \phi(\overline{a})$ and $w \prec u$ then $u \Vdash \phi(\overline{a})$.
\end{theorem}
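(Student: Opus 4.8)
The plan is to argue by induction on the structure of $\phi$, showing that persistence is preserved by each formula-forming operation. Since the satisfaction clauses for $\top$ and $\bot$ are world-independent, the base cases for these constants are immediate: $u \Vdash \top(\overline{a})$ always holds, and the case of $\bot$ is vacuous, since the antecedent $w \Vdash \bot(\overline{a})$ never obtains. For atomic formulas $R^n(t_1, \ldots, t_n)$, the argument is exactly the persistence constraint built into the definition of a model: if $\langle |t_1|(\overline{a}), \ldots, |t_n|(\overline{a})\rangle \in |R^n|(w)$ and $w \prec u$, then this tuple lies in $|R^n|(u)$ as well, because $|R^n|(w) \subseteq |R^n|(u)$.

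The inductive cases for $\wedge$, $\vee$, $\forall$ and $\exists$ are routine appeals to the induction hypothesis, because their satisfaction clauses at $w$ are stated purely in terms of satisfaction of immediate subformulas at the same world $w$. For instance, if $w \Vdash (\phi \wedge \psi)(\overline{a})$, then $w$ satisfies both conjuncts, so by the induction hypothesis $u$ satisfies both, whence $u \Vdash (\phi \wedge \psi)(\overline{a})$; the disjunction case is dual, and the quantifier cases run the same way, applying the hypothesis to $\phi(\overline{a}, b)$ uniformly in $b \in M$.

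The one case that does real work --- and the reason transitivity of $\prec$ is imposed on models --- is the conditional. Suppose $w \Vdash (\phi \rightarrow \psi)(\overline{a})$ and $w \prec u$; I want $u \Vdash (\phi \rightarrow \psi)(\overline{a})$. So I would fix an arbitrary $y \succ u$ with $y \Vdash \phi(\overline{a})$, the goal being $y \Vdash \psi(\overline{a})$. Here the key move is to use transitivity to combine $w \prec u$ and $u \prec y$ into $w \prec y$, so that $y$ is itself a $\prec$-successor of $w$. The assumption on $w$ then applies directly to $y$ and delivers $y \Vdash \psi(\overline{a})$. Notice that, unlike every other case, this step does not invoke the induction hypothesis at all: it turns entirely on the transitivity of the accessibility relation, which is precisely what is needed to push a condition of the form ``for all successors\ldots'' forward along $\prec$. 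I expect this conditional case to be the crux of the argument, with all the remaining cases amounting to a mechanical transcription of the relevant satisfaction clause.
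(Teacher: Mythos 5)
Your proof is correct and is exactly the argument the paper has in mind when it says ``an easy induction on the construction of $\mathcal{L}$-formulas'': the atomic case rests on the persistence constraint, the $\wedge$, $\vee$, $\forall$, $\exists$ cases are routine, and the conditional case goes through by transitivity of $\prec$ without needing the induction hypothesis. Nothing further is required.
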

\begin{proof}
An easy induction on the construction of $\mathcal{L}$-formulas.
\end{proof}

\noindent For sentences $\Gamma \cup \{\phi\} \subseteq \mathcal{L}$, we write $\Gamma \models \phi$ iff for every $\mathcal{L}$-model $\mathfrak{M}$ and every reflexive world $w \in \mathfrak{M}$: $w \Vdash \Gamma$ only if $w \Vdash \phi$. We will later show that $\textsf{TJK}^{d+}$ is the logic defined by $\models$. By restricting $\models$ to reflexive worlds, we preserve modus ponens. On the other hand, since a reflexive world may see an irreflexive world, we lose conditional proof. For example, $\phi \wedge (\phi \rightarrow \psi) \models \psi$ but $\not \models \phi \wedge (\phi \rightarrow \psi) \rightarrow \psi$. As we will see, however, $\models$ does validate a weaker form of conditional proof. The logic which results from $\models$ by dropping the requirement that $\models$ only be evaluated at reflexive worlds is known as constant domain basic logic ($\textsf{BQL}_\textsf{CD}$).\begin{footnote}{See Middleton (2020) for a discussion of $\textsf{BQL}_\textsf{CD}$.}\end{footnote} The use of `basic' here derives from Visser (1981), where the propositional fragment of $\textsf{BQL}_\textsf{CD}$ is defined, rather than $\textsf{B}^{d+}$.\begin{footnote}{The variable domain extension of Visser's basic propositional logic was studied by Ruitenburg (1998) under the name `basic predicate calculus'.}\end{footnote} Thus, by my preferred name for $\textsf{TJK}^{d+}$ is constant domain basic logic with a reflexive root ($\textsf{BQL}_\textsf{CD}^r$).\begin{footnote}{$\textsf{BQL}_\textsf{CD}^r$ was first discussed by Restall (1994) under the name `$bka$', though Restall was only able to prove completeness for the propositional fragment.}\end{footnote} 

\begin{theorem}[Compactness] If $\Gamma \models \phi$ then $\Gamma_0 \models \phi$ for some finite $\Gamma_0 \subseteq \Gamma$.
\end{theorem}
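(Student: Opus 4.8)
The plan is to reduce the theorem to the compactness theorem for ordinary (many-sorted) classical first-order logic by means of the standard translation of the binary Kripke semantics. Introduce a two-sorted classical language $\mathcal{L}^\sharp$ with a sort $\mathsf{W}$ for worlds and a sort $\mathsf{D}$ for individuals: $\mathcal{L}^\sharp$ has a binary relation symbol $\prec$ on $\mathsf{W}$, carries over every constant and function symbol of $\mathcal{L}$ on $\mathsf{D}$, and replaces each $n$-ary relation symbol $R^n$ of $\mathcal{L}$ by an $(n+1)$-ary symbol $R^\sharp$ whose first argument is of sort $\mathsf{W}$ and whose remaining $n$ arguments are of sort $\mathsf{D}$. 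To each $\mathcal{L}$-formula $\phi(\overline{v})$ I assign an $\mathcal{L}^\sharp$-formula $\phi^\sharp(w, \overline{x})$ with a distinguished free world variable $w$, by recursion: atoms go to $R^\sharp(w, t_1, \ldots, t_n)$; $\wedge, \vee, \top, \bot$ are preserved; the object-language quantifiers become plain $\mathsf{D}$-quantifiers (this is the one place where constant domain is used — no relativization to the world is needed); and, crucially, $(\phi \to \psi)^\sharp(w, \overline{x}) := \forall u\,(w \prec u \to (\phi^\sharp(u, \overline{x}) \to \psi^\sharp(u, \overline{x})))$, quantifying over \emph{strict} $\prec$-successors so as to match the satisfaction clause for $\rightarrow$.

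Next I would record the two facts that make the translation faithful. Let $T_{\mathrm{fr}}$ be the $\mathcal{L}^\sharp$-theory asserting that $\prec$ is transitive, that each sort is non-empty, and the persistence axioms $\forall w\,\forall u\,\forall \overline{x}\,(w \prec u \wedge R^\sharp(w, \overline{x}) \to R^\sharp(u, \overline{x}))$ for every relation symbol $R^n$ of $\mathcal{L}$. First, every $\mathcal{L}$-model $\mathfrak{M}$ determines an $\mathcal{L}^\sharp$-structure $\mathfrak{M}^\sharp \models T_{\mathrm{fr}}$, and conversely every model of $T_{\mathrm{fr}}$ arises in this way — the persistence constraint in the definition of an $\mathcal{L}$-model is exactly the content of the persistence axioms. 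Second, a routine induction on $\phi$ (the Persistence theorem is not even needed) gives the correspondence
$$\mathfrak{M}, w \Vdash \phi(\overline{a}) \iff \mathfrak{M}^\sharp \models \phi^\sharp[w, \overline{a}].$$

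With this in hand the reduction is immediate. Fix a fresh world constant $c$ and consider the $\mathcal{L}^\sharp$-theory
$$S \;=\; T_{\mathrm{fr}} \cup \{c \prec c\} \cup \{\gamma^\sharp(c) : \gamma \in \Gamma\} \cup \{\neg\,\phi^\sharp(c)\},$$
where each member of $\Gamma \cup \{\phi\}$ is a sentence, so $\gamma^\sharp(c)$ and $\phi^\sharp(c)$ are $\mathcal{L}^\sharp$-sentences. By the two facts above, a model of $S$ is precisely an $\mathcal{L}$-model with a distinguished reflexive world (the denotation of $c$) that satisfies every member of $\Gamma$ but not $\phi$; hence $\Gamma \models \phi$ holds iff $S$ is unsatisfiable. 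Assuming $\Gamma \models \phi$, classical compactness yields a finite unsatisfiable $S_0 \subseteq S$; setting $\Gamma_0 = \{\gamma \in \Gamma : \gamma^\sharp(c) \in S_0\}$, the theory $T_{\mathrm{fr}} \cup \{c \prec c\} \cup \{\gamma^\sharp(c) : \gamma \in \Gamma_0\} \cup \{\neg\,\phi^\sharp(c)\}$ extends $S_0$ and is therefore still unsatisfiable, which by the same correspondence means $\Gamma_0 \models \phi$ with $\Gamma_0$ finite. Note that only finitely many frame axioms occur in $S_0$, but all of $T_{\mathrm{fr}}$ may be retained in the final theory without harm, since unsatisfiability is preserved under adding sentences.

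The argument is largely bookkeeping; the only points demanding care are that the $\rightarrow$-clause of the translation must range over \emph{strict} successors (so that irreflexive worlds are treated correctly) and that the reflexive-root restriction built into $\models$ is captured by the single sentence $c \prec c$ rather than by a frame condition. I expect the main thing to verify — such as it is — to be that models of $T_{\mathrm{fr}}$ really do correspond to legitimate $\mathcal{L}$-models, i.e.\ that imposing persistence as a first-order axiom scheme is enough; this is exactly why persistence is included in the definition of an $\mathcal{L}$-model in the first place, and it is constant domain that keeps the quantifier clauses of the translation first-order, so that classical compactness applies directly.
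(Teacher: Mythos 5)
Your argument is correct, but it takes a different route from the paper. The paper's proof is a direct model-theoretic one: it adapts the ultraproduct proof of classical compactness to Kripke models, taking an ultraproduct of a family of counterexample models (each refuting $\Gamma_0 \models \phi$ for some finite $\Gamma_0$) and verifying a {\L}o\'s-style theorem for the satisfaction relation $\Vdash$ to obtain a single reflexive world refuting $\Gamma \models \phi$. You instead reduce the problem to classical compactness outright via the standard translation into a two-sorted first-order language, which works precisely because every clause of the satisfaction relation --- including the $\rightarrow$-clause and, thanks to constant domains, the quantifier clauses --- is first-order definable over the two-sorted structure, and because the frame conditions (transitivity, persistence) and the reflexive-root restriction (your single sentence $c \prec c$) are likewise first-order. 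The two proofs rest on the same underlying fact; yours packages it once in the translation lemma and then delegates everything to classical compactness, avoiding any re-proof of {\L}o\'s, while the paper's keeps the construction internal to the Kripke framework. One terminological quibble: there is nothing ``strict'' about the successors in the $\rightarrow$-clause --- the clause simply ranges over all $u$ with $w \prec u$, which is exactly what your formula $\forall u\,(w \prec u \rightarrow \cdots)$ says; the point you are making is just that one must not pass to the reflexive closure of $\prec$. The formula is right, so this does not affect the proof.
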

\begin{proof} Similar to the ultraproduct proof of compactness for classical first-order logic (see e.g.\ Poizat (2000)).
\end{proof}

\section{A simplified natural deduction system for $\textsf{TJK}^{d+}$}

In this section, I introduce a simplified natural deduction system for $\textsf{TJK}^{d+}$ ($\textsf{BQL}_\textsf{CD}^r$) in which the axioms for $\rightarrow$ are (mostly) replaced by a restricted form of conditional proof. Let $\Pi$ be a proof-tree with a leaf labelled by a possibly discharged occurrence $\phi^i$ of $\phi$.  We say $\phi^i$ is \textit{unsafe} in $\Pi$ iff $\phi^i$ occupies the following position:
$$
\infer*
	{}
	{
	....
	&
	\infer
		{\beta}
		{
		\infer*
			{\alpha}
			{}
		&
		\infer*
			{\alpha \rightarrow \beta}
			{\phi^i}
		}
	&
	....
	}
$$
(i.e.\ $\phi^i$ is a possibly discharged assumption in the right main subtree of an application of modus ponens). So, for example, the occurrence of $\alpha \rightarrow (\beta \rightarrow \gamma)$ in each of the following proofs is unsafe:
$$
\infer
	{\beta \rightarrow \gamma}
	{
	\alpha
	&
	\alpha \rightarrow (\beta \rightarrow \gamma)
	}
\qquad
\infer
	{\delta}
	{
	\alpha
	&
	\infer
		{\alpha \rightarrow \delta}
		{
		\alpha \rightarrow (\beta \rightarrow \gamma)
		&
		(\beta \rightarrow \gamma) \rightarrow \delta
		}
	}
$$
The simplified natural deduction system $\mathcal{N}\textsf{BQL}_\textsf{CD}^r$ for $\textsf{TJK}^{d+}$ ($\textsf{BQL}_\textsf{CD}^r$) consists of all trees of (possibly discharged) $\mathcal{L}^+$-sentences constructed in accordance with the following inference rules, where restrictions C1 -- C4 from \S2 remain in place and, in addition,
\begin{enumerate}[(C5)]
\item no unsafe occurrence of an open assumption may be discharged.
\end{enumerate}
Consequently, by C4 and C5, $\phi(a_i)$ may not occur unsafely in the right main subproof of $\exists$-Elim.

$$
[\top]\hspace{2mm}(\top\text{-Int})
\qquad
\infer[(\bot\text{-Elim})]
	{\phi}
	{\bot}
$$
$$
\infer[(\wedge\text{-Int})]
	{\phi \wedge \psi}
	{
	\phi
	&
	\psi
	}
\qquad
\infer[(\wedge\text{-Elim})]
	{\phi / \psi}
	{\phi \wedge \psi}
$$
$$
\infer[(\vee\text{-Int})]
	{\phi \vee \psi}
	{\phi / \psi}
\qquad
\infer[(\vee\text{-Elim})]
	{\chi}
	{
	\phi \vee \psi
	&
	\infer*
		{\chi}
		{[\phi]}
	&
	\infer*
		{\chi}
		{[\psi]}
	}
$$
$$
\infer[(\rightarrow\hspace{-1mm}\text{-Int})]
	{\phi \rightarrow \psi}
	{
	\infer*
		{\psi}
		{[\phi]}
	}
\qquad
\infer[(\rightarrow\hspace{-1mm}\text{-Elim})]
	{\psi}
	{
	\phi
	&
	\phi \rightarrow \psi
	}
$$
$$
\infer[(\text{Internal Transitivity})]
	{\phi \rightarrow \chi}
	{
	\phi \rightarrow \psi
	&
	\psi \rightarrow \chi
	}
$$
$$
\infer[(\text{Internal } \wedge\hspace{-1mm}\text{-Int})]
	{\phi \rightarrow \psi \wedge \chi}
	{
	\phi \rightarrow \psi
	&
	\phi \rightarrow \chi
	}
\qquad
\infer[(\text{Internal } \vee\hspace{-1mm}\text{-Elim})]
	{\phi \vee \psi \rightarrow \chi}
	{
	\phi \rightarrow \chi
	&
	\psi \rightarrow \chi
	}
$$
$$
\infer[(\text{Internal } \forall\text{-Int})]
	{\phi \rightarrow \forall v \psi}
	{\forall v (\phi \rightarrow \psi)}
\qquad
\infer[(\text{Internal } \exists\text{-Elim})]
	{\exists v \phi \rightarrow \psi}
	{\forall v (\phi \rightarrow \psi)}
$$
$$
\infer[(\forall\text{-Int})]
	{\forall v \phi}
	{\phi(a_i)}
\qquad
\infer[(\forall\text{-Elim})]
	{\phi(t)}
	{\forall v \phi}
\qquad
\infer[(\text{CD})]
	{\phi \vee \forall v \psi}
	{\forall v (\phi \vee \psi)}
$$
$$
\infer[(\exists\text{-Int})]
	{\exists v \phi}
	{\phi(t)}
\qquad
\infer[(\exists\text{-Elim})]
	{\psi}
	{
	\exists v \phi
	&
	\infer*
		{\psi}
		{[\phi(a_i)]}
	}
$$
We write $\Gamma \vdash \phi$ iff there exists a proof of $\phi$ from $\Gamma$ in $\mathcal{N}\textsf{BQL}_\textsf{CD}^r$. Note that the Curry derivation is not valid in $\mathcal{N}\textsf{BQL}_\textsf{CD}^r$, since the starred assumption is unsafe at the point where it is discharged:
$$
\infer
	{\bot}
	{
	\infer
		{T\ulcorner C \urcorner}
		{
		\infer
			{T\ulcorner C \urcorner \rightarrow \bot}
			{
			\infer
				{\bot}
				{
				[T\ulcorner C \urcorner]
				&
				\infer
					{T\ulcorner C \urcorner \rightarrow \bot}
					{
					^*[T\ulcorner C \urcorner]
					&
					T\ulcorner C \urcorner \rightarrow (T\ulcorner C \urcorner \rightarrow \bot)
					}
				}
			}
		&
		(T\ulcorner C \urcorner \rightarrow \bot) \rightarrow T\ulcorner C \urcorner
		}
	&
	\infer*
		{T\ulcorner C \urcorner \rightarrow \bot}
		{}
	}
$$

\subsection{Reduction theorem}

A key property of $\mathcal{N}\textsf{BQL}_\textsf{CD}^r$ is that $\rightarrow$-Elim is, in a sense to be made precise, eliminable from proofs. This property allows us to show both that $\mathcal{N}\textsf{BQL}_\textsf{CD}^r$ coincides with the original axiomatization of $\textsf{TJK}^{d+}$ and that $\mathcal{N}\textsf{BQL}_\textsf{CD}^r$ is sound and complete with respect to the binary Kripke semantics defined in \S3. Let $\mathcal{N}\textsf{BQL}_\textsf{CD}$ denote the natural deduction system which results from removing $\rightarrow$-Elim from $\mathcal{N}\textsf{BQL}_\textsf{CD}^r$.\begin{footnote}{This notation derives from the fact that $\mathcal{N}\textsf{BQL}_\textsf{CD}$ is sound and complete with respect to $\textsf{BQL}_\textsf{CD}$ (see Middleton (2020) and \S5.1 of this paper, where part of the proof in Middleton (2020) is simplified).}\end{footnote} We write $\Gamma \vdash_{[-1]} \phi$ iff there exists a proof of $\phi$ from $\Gamma$ in $\mathcal{N}\textsf{BQL}_\textsf{CD}$. Note that, since $\mathcal{N}\textsf{BQL}_\textsf{CD}$ does not contain $\rightarrow$-Elim, proofs in $\mathcal{N}\textsf{BQL}_\textsf{CD}$ do not contain unsafe occurrences of open assumptions. Consequently, open assumptions can be discharged unrestrictedly in $\mathcal{N}\textsf{BQL}_\textsf{CD}$. 

\begin{lemma}[Distribution] $\phi \wedge (\psi \vee \chi) \vdash_{[-1]} (\phi \wedge \psi) \vee (\phi \wedge \chi)$.
\end{lemma}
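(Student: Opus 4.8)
The plan is to give the ordinary disjunction-elimination derivation of distribution, observing that it lives entirely in the $\{\wedge, \vee\}$-fragment and never calls on $\rightarrow$-Elim, so that it is already a legal proof in $\mathcal{N}\textsf{BQL}_\textsf{CD}$. Keeping the sole premise $\phi \wedge (\psi \vee \chi)$ open throughout, I would first apply $\wedge$-Elim to it twice, extracting $\phi$ on the one hand and $\psi \vee \chi$ on the other.

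Next I would run $\vee$-Elim with major premise $\psi \vee \chi$ and target conclusion $(\phi \wedge \psi) \vee (\phi \wedge \chi)$. In the left subproof I discharge the temporary assumption $[\psi]$: pairing it with $\phi$ (again extracted from the open premise) via $\wedge$-Int yields $\phi \wedge \psi$, and a single $\vee$-Int step gives $(\phi \wedge \psi) \vee (\phi \wedge \chi)$. The right subproof is symmetric, discharging $[\chi]$, forming $\phi \wedge \chi$ by $\wedge$-Int and concluding the same disjunction by $\vee$-Int. Since both branches end in $(\phi \wedge \psi) \vee (\phi \wedge \chi)$, the $\vee$-Elim closes with exactly this formula, which is the desired conclusion.

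There is essentially no obstacle here. The one point worth flagging is that $\phi$ must be available inside each branch of the $\vee$-Elim; this is immediate because the premise $\phi \wedge (\psi \vee \chi)$ is never discharged (it sits on the left of $\vdash_{[-1]}$), so a fresh $\wedge$-Elim on it can be performed wherever $\phi$ is needed. Moreover, since the derivation contains no application of $\rightarrow$-Elim, no leaf is unsafe in the sense of C5, and so the discharges of $[\psi]$ and $[\chi]$ by $\vee$-Elim are unrestricted, exactly as the remark preceding the lemma guarantees for $\mathcal{N}\textsf{BQL}_\textsf{CD}$.
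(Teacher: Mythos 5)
Your derivation is correct and is exactly the routine $\wedge$-Elim/$\vee$-Elim/$\wedge$-Int/$\vee$-Int argument the paper has in mind (the lemma is stated without proof precisely because this standard derivation uses no $\rightarrow$-Elim and hence raises no issues with C5). Your observation that the discharges of $[\psi]$ and $[\chi]$ are unrestricted in $\mathcal{N}\textsf{BQL}_\textsf{CD}$ matches the remark the paper makes immediately before the lemma.
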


\begin{lemma}[Infinite Distribution] $\phi \wedge \exists v \psi \vdash_{[-1]} \exists v(\phi \wedge \psi)$.
\end{lemma}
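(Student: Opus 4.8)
The plan is to prove this exactly as one proves the finite Distribution lemma, but with $\exists$-Elim playing the role that $\vee$-Elim plays there. Since $\mathcal{N}\textsf{BQL}_\textsf{CD}$ retains $\exists$-Elim (only $\rightarrow$-Elim is deleted), this rule is available, and the structure of the two arguments is parallel: disjunction is replaced throughout by existential quantification.

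First I would apply $\wedge$-Elim twice to the single open assumption $\phi \wedge \exists v \psi$, obtaining $\phi$ and $\exists v \psi$ separately. I then set up an application of $\exists$-Elim with major premise $\exists v \psi$, choosing a parameter $a_i$ that is fresh, i.e.\ one not occurring in $\phi$ or $\psi$. In the right main subproof I assume $\psi(a_i)$, combine it with the already-derived $\phi$ by $\wedge$-Int to get $\phi \wedge \psi(a_i)$, which is exactly $(\phi \wedge \psi)(a_i)$ because $\phi$, being a sentence, contains no free occurrence of $v$, and finally apply $\exists$-Int to conclude $\exists v(\phi \wedge \psi)$. The conclusion of the $\exists$-Elim step is then $\exists v(\phi \wedge \psi)$, with the assumption $\psi(a_i)$ discharged, which is the desired end-sequent.

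What remains is to verify the side conditions. By freshness, $a_i$ does not occur in the matrix $\psi$, nor in the conclusion $\exists v(\phi \wedge \psi)$, nor in the only other open assumption feeding the right subproof (namely $\phi \wedge \exists v \psi$), so the eigenvariable constraints C2--C3 are met; C4 holds because the single occurrence of $\psi(a_i)$ is discharged; and C5 is vacuous, since $\mathcal{N}\textsf{BQL}_\textsf{CD}$ contains no $\rightarrow$-Elim and hence no unsafe occurrences at all. The only point requiring any attention is this freshness bookkeeping for $\exists$-Elim --- in particular ensuring that $a_i$ is absent from the side conjunct $\phi$ that is imported into the right subproof --- but this is entirely routine, so I expect no genuine obstacle.
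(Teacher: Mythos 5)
Your proof is correct and is exactly the routine argument the paper has in mind (the lemma is stated without proof precisely because it is this straightforward): $\wedge$-Elim, then $\exists$-Elim on $\exists v\psi$ with a fresh $a_i$, with $\wedge$-Int and $\exists$-Int inside the right subproof. Your verification of C3--C5 is also right --- in particular, the second open occurrence of $\phi\wedge\exists v\psi$ inside the right subproof is unproblematic since $a_i$ does not occur in it, and C5 is vacuous in $\mathcal{N}\textsf{BQL}_\textsf{CD}$.
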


\begin{lemma}[$\wedge$-Release] $\phi \wedge \psi \rightarrow \chi \vdash_{[-1]} \phi \rightarrow (\psi \rightarrow \chi)$.
\end{lemma}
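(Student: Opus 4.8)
The plan is to build the nested conditional directly with $\rightarrow$-Int and the internal rules. Since $\rightarrow$-Elim is unavailable in $\mathcal{N}\textsf{BQL}_\textsf{CD}$, the naive strategy of assuming $\phi$ and $\psi$ and detaching $\chi$ from the premise $\phi \wedge \psi \rightarrow \chi$ is blocked, so I cannot proceed by repeated conditional proof with modus ponens at the bottom. The guiding observation is instead that, working under an open assumption $\phi$, I can manufacture the conditional $\psi \rightarrow \phi \wedge \psi$ out of nothing and then compose it with the premise by Internal Transitivity to obtain $\psi \rightarrow \chi$; a final discharge of $\phi$ then yields the goal.

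Concretely, I would exhibit the following proof:
$$
\infer[(\rightarrow\text{-Int})]
	{\phi \rightarrow (\psi \rightarrow \chi)}
	{
	\infer[(\text{Int.\ Trans.})]
		{\psi \rightarrow \chi}
		{
		\infer[(\text{Int.\ }\wedge\text{-Int})]
			{\psi \rightarrow \phi \wedge \psi}
			{
			\infer[(\rightarrow\text{-Int})]
				{\psi \rightarrow \phi}
				{[\phi]}
			&
			\infer[(\rightarrow\text{-Int})]
				{\psi \rightarrow \psi}
				{[\psi]}
			}
		&
		\phi \wedge \psi \rightarrow \chi
		}
	}
$$
Reading from the leaves: $\psi \rightarrow \psi$ comes from $\rightarrow$-Int discharging the assumption $[\psi]$; $\psi \rightarrow \phi$ comes from $\rightarrow$-Int as well, but with a \emph{vacuous} (zero-occurrence) discharge of $\psi$, its subproof consisting of nothing but the open assumption $\phi$ displayed at the leaf. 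Internal $\wedge$-Int combines these into $\psi \rightarrow \phi \wedge \psi$, Internal Transitivity composes this with the premise $\phi \wedge \psi \rightarrow \chi$ to give $\psi \rightarrow \chi$, and the outermost $\rightarrow$-Int discharges the single displayed occurrence of $\phi$ to deliver $\phi \rightarrow (\psi \rightarrow \chi)$. The premise is never discharged and so remains as the sole open assumption, as required.

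The one genuinely load-bearing step — and the step I would flag as the main obstacle if one attempted the analogous derivation in the full system $\mathcal{N}\textsf{BQL}_\textsf{CD}^r$ — is the derivation of $\psi \rightarrow \phi$ by vacuously discharging $\psi$ over the open assumption $\phi$. This move is exactly what the weakening axiom $\phi \rightarrow (\psi \rightarrow \phi)$ buys us, and it is licensed here precisely because $\mathcal{N}\textsf{BQL}_\textsf{CD}$ contains no $\rightarrow$-Elim, hence no unsafe occurrences, so by the remark preceding the lemma assumptions may be discharged without restriction (C5 is vacuous). I would finish by checking the bookkeeping: no eigenvariable constraint (C2, C3) is in play, since no quantifier rule appears; the only assumptions discharged are $[\psi]$ at $\psi \rightarrow \psi$ and $[\phi]$ at the root; and $\phi \wedge \psi \rightarrow \chi$ is left open throughout, establishing $\phi \wedge \psi \rightarrow \chi \vdash_{[-1]} \phi \rightarrow (\psi \rightarrow \chi)$.
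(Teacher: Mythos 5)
Your proof is correct: the vacuous discharge yielding $\psi \rightarrow \phi$ over the open assumption $\phi$ is explicitly licensed by C4's ``including zero'' clause, and since $\mathcal{N}\textsf{BQL}_\textsf{CD}$ lacks $\rightarrow$-Elim there are no unsafe occurrences, so both discharges are unrestricted. The paper states this lemma without proof, but your derivation is exactly the natural-deduction analogue of the argument the paper does give for the corresponding fact in the Hilbert-style system (Lemma 7), which likewise builds $\psi \rightarrow \phi \wedge \psi$ from weakening and then composes with the premise $\phi \wedge \psi \rightarrow \chi$ by transitivity.
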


\noindent Let $\Box \phi$ abbreviate $\top \rightarrow \phi$. 

\begin{lemma}[$\forall$-Embedding] $\forall v \Box^n \phi \vdash_{[-1]} \Box^n \forall v \phi$.
\end{lemma}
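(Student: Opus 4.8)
The plan is to induct on $n$, exploiting the fact that although $\mathcal{N}\textsf{BQL}_\textsf{CD}$ lacks $\rightarrow$-Elim it retains $\rightarrow$-Int, and that (as the text notes) discharge in $\mathcal{N}\textsf{BQL}_\textsf{CD}$ is entirely unrestricted, since no unsafe occurrences can arise in the absence of $\rightarrow$-Elim. The base case $n = 0$ is immediate: $\Box^0 \phi = \phi$, and $\forall v \phi \vdash_{[-1]} \forall v \phi$ is witnessed by the one-node proof consisting of the assumption itself.

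For the inductive step, suppose $\forall v \Box^n \phi \vdash_{[-1]} \Box^n \forall v \phi$; I want $\forall v \Box^{n+1} \phi \vdash_{[-1]} \Box^{n+1} \forall v \phi$. Since $\Box^{n+1} \phi = \top \rightarrow \Box^n \phi$, the assumption $\forall v \Box^{n+1} \phi$ is $\forall v(\top \rightarrow \Box^n \phi)$, and because $v$ is not free in $\top$, one application of Internal $\forall$-Int yields $\top \rightarrow \forall v \Box^n \phi$, i.e.\ $\Box \forall v \Box^n \phi$. Separately, I would internalize the inductive hypothesis: taking the $\mathcal{N}\textsf{BQL}_\textsf{CD}$-proof of $\Box^n \forall v \phi$ from the assumption $\forall v \Box^n \phi$ and discharging every occurrence of that assumption via $\rightarrow$-Int (legitimate here, since discharge is unrestricted), I obtain a proof with no open assumptions of the conditional $\forall v \Box^n \phi \rightarrow \Box^n \forall v \phi$. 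Feeding these two conditionals into Internal Transitivity then gives $\top \rightarrow \Box^n \forall v \phi$, which is exactly $\Box^{n+1} \forall v \phi$; the whole derivation rests only on the single open assumption $\forall v \Box^{n+1} \phi$, as required.

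The step I expect to carry the real weight is the internalization of the inductive hypothesis. It is precisely because $\mathcal{N}\textsf{BQL}_\textsf{CD}$ keeps $\rightarrow$-Int, and permits unrestricted discharge in the absence of $\rightarrow$-Elim, that an object-level entailment $\forall v \Box^n \phi \vdash_{[-1]} \Box^n \forall v \phi$ can be turned into the theorem-level conditional needed to compose with the boxed formula under Internal Transitivity. Were the argument run inside the full system $\mathcal{N}\textsf{BQL}_\textsf{CD}^r$, this discharge could be blocked by an unsafe occurrence under C5, so it is worth flagging that the proof lives specifically in the $\rightarrow$-Elim-free fragment. No eigenvariable complications arise, since the only variable generalized is $v$ and the auxiliary formula $\top$ is closed.
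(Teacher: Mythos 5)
Your proof is correct and is essentially identical to the paper's: the same induction on $n$, with the inductive step obtained by applying Internal $\forall$-Int to $\forall v(\top \rightarrow \Box^n\phi)$, internalizing the induction hypothesis via an unrestricted $\rightarrow$-Int discharge, and composing with Internal Transitivity. Your remarks on why discharge is unproblematic in the $\rightarrow$-Elim-free fragment match the paper's own observation.
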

\begin{proof} By induction on $n$.  The base case $n = 0$ is trivial.  For the induction step:
$$
\infer[(\text{Internal Transitivity})]
	{\Box^{n + 1} \forall v \phi}
	{
	\infer[(\text{Internal } \forall\text{-Int})]
		{\top \rightarrow \forall v \Box^n \phi}
		{\forall v \Box^{n + 1} \phi}
	&
	\infer
		{\forall v \Box^n \phi \rightarrow \Box^n \forall v \phi}
		{
		\infer*[\hspace{1cm} \text{(induction hypothesis)}]
			{\Box^n \forall v \phi}
			{[\forall v \Box^n \phi]}
		}	
	}
$$
\end{proof}

\noindent Often we will want to append a proof of $\psi$ from $\{\phi_1,...,\phi_n\}$ to individual proofs of $\phi_1$,...,$\phi_n$:
$$
\infer*
	{\psi}
	{
	\infer*
		{\phi_1}
		{}
	&
	...
	&
	\infer*
		{\phi_n}
		{}
	}
$$
When $\forall$-Int or $\exists$-Elim is part of our proof system, chaining together proofs in this manner does not in general result in a valid proof, since the eigenvariable constraints may get violated. However, we can always avoid this problem by appropriately renaming the eigenvariables. 

\begin{lemma}[Regularity] If $\phi_1,...,\phi_m \vdash_{[-1]} \psi$ then $\Box^n\phi_1,...,\Box^n\phi_m \vdash_{[-1]} \Box^n\psi$.
\end{lemma}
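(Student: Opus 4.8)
The plan is to reduce the statement to the case $n = 1$ and then iterate. For $n = 1$ the goal is to show that $\phi_1,\dots,\phi_m \vdash_{[-1]} \psi$ implies $\Box\phi_1,\dots,\Box\phi_m \vdash_{[-1]} \Box\psi$; the general case then follows by applying this $n$ times, since $\Box^{k+1}\phi_i = \Box(\Box^k\phi_i)$ and we may reapply the $n = 1$ result to the sequent $\Box^k\phi_1,\dots,\Box^k\phi_m \vdash_{[-1]} \Box^k\psi$ at each stage.

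For the $n = 1$ case, the key idea is to avoid trying to ``box'' the given derivation line by line. Writing $\Phi$ for $\phi_1 \wedge \cdots \wedge \phi_m$, I would first convert the hypothesis into a closed conditional $\vdash_{[-1]} \Phi \to \psi$: take the given proof of $\psi$, replace each open leaf $\phi_i$ by a short derivation of $\phi_i$ from $\Phi$ via $\wedge$-Elim, and then apply $\rightarrow$-Int to discharge every occurrence of $\Phi$. This discharge is legitimate precisely because $\mathcal{N}\textsf{BQL}_\textsf{CD}$ contains no $\rightarrow$-Elim and hence no unsafe occurrences, so C5 is vacuous; one need only take the routine care of renaming eigenvariables as the $\wedge$-Elim subproofs are spliced in, so that C2 and C3 are preserved.

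With the closed conditional $\Phi \to \psi$ in hand, the conclusion follows without any appeal to modus ponens, using the internal rules as surrogates for detachment. From the open assumptions $\Box\phi_1,\dots,\Box\phi_m$, that is $\top \to \phi_1,\dots,\top \to \phi_m$, repeated applications of Internal $\wedge$-Int yield $\top \to \Phi$. A single application of Internal Transitivity to $\top \to \Phi$ and $\Phi \to \psi$ then delivers $\top \to \psi$, which is $\Box\psi$. This completes the $n = 1$ case; the full statement follows by the iteration described above.

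The step I expect to be the real obstacle is the one this argument is designed to circumvent. The tempting approach---induction on the given derivation, replacing each formula $\theta$ by $\Box\theta$---breaks down at exactly the rules that discharge assumptions or introduce conditionals: $\Box$ does not distribute over $\vee$, so the $\vee$-Elim case cannot simply case-split under the box, and a boxed $\rightarrow$-Int would produce $\Box\phi \to \Box\psi$ rather than the required $\Box(\phi \to \psi)$. (This is presumably where Distribution, Infinite Distribution, and $\wedge$-Release would be called upon, with $\forall$-Embedding handling $\forall$-Int.) The bundling maneuver sidesteps all of these cases simultaneously, because Internal Transitivity lets us compose a genuine conditional $\Phi \to \psi$ with the boxed premise $\top \to \Phi$ directly, so the only assumption discharged in the whole argument is the single, harmless application of $\rightarrow$-Int used to form $\Phi \to \psi$.
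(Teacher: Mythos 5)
Your proof is correct and is essentially the paper's own argument: the paper also proceeds by induction on $n$, and its induction step is exactly your $n=1$ construction applied to the boxed sequent --- Internal $\wedge$-Ints to obtain $\top \rightarrow \bigwedge_i \Box^n\phi_i$, a $\rightarrow$-Int (legitimate because $\mathcal{N}\textsf{BQL}_\textsf{CD}$ has no unsafe assumptions) to form $\bigwedge_i \Box^n\phi_i \rightarrow \Box^n\psi$ from the inductively given proof with $\wedge$-Elims spliced in at the leaves, and Internal Transitivity to compose. Your remarks on eigenvariable renaming and on why the naive line-by-line boxing fails match the paper's surrounding discussion as well.
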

\begin{proof} Suppose $\phi_1,...,\phi_m \vdash_{[-1]} \psi$. We prove the lemma by induction on $n$. The base case $n = 0$ is trivial. For the induction step:\begin{footnote}{Thanks to the anonymous referee at the RSL for simplifying this proof.}\end{footnote}
$$
\infer[(\text{Internal Transitivity})]
	{\Box^{n + 1} \psi}
	{
	\infer
		{\top \rightarrow \bigwedge_i \Box^n \phi_i}
		{
		\infer
			{\text{Internal $\wedge$-Ints}}
			{
			\Box^{n+1}\phi_1
			&
			...
			&
			\Box^{n+1}\phi_m
			}
		}
	&
	\infer
		{\bigwedge_i \Box^n \phi_i \rightarrow \Box^n \psi}
		{
		\infer*[\hspace{2cm} \text{(induction hypothesis)}]
			{\Box^n \psi}
			{
			\infer
				{\Box^n \phi_1}
				{
				\infer
					{\wedge\text{-Elims}}
					{[\bigwedge_i \Box^n \phi_i]}
				}
			&
			...
			&
			\infer
				{\Box^n \phi_m}
				{
				\infer
					{\wedge\text{-Elims}}
					{[\bigwedge_i \Box^n \phi_i]}
				}
			}
		}
	}
$$
\end{proof}

\noindent Let $\mathcal{N}\textsf{BQL}_\textsf{CD}[-1] = \mathcal{N}\textsf{BQL}_\textsf{CD}$ and, for $n \geq 0$, let $\mathcal{N}\textsf{BQL}_\textsf{CD}[n]$ denote the natural deduction system obtained by adding the rule
$$
\infer
	{\psi}
	{
	\phi
	&
	\fbox{
	\infer*[\mathcal{N}\textsf{BQL}_\textsf{CD}\text{$[m]$ for } m < n]
		{\phi \rightarrow \psi}
		{}
	}
	}
$$
to $\mathcal{N}\textsf{BQL}_\textsf{CD}$, maintaining restrictions C1 -- C5. This rule states that modus ponens may be applied if the right main subproof is contained in the system $\mathcal{N}\textsf{BQL}_\textsf{CD}[m]$ for some $m$ such that $-1 \leq m < n$. For example, the following proof belongs to $\mathcal{N}\textsf{BQL}_\textsf{CD}[n]$ for every $n \geq 1$:
$$
\infer
	{\alpha}
	{
	\infer
		{\chi}
		{
		\psi
		&
		\infer
			{\psi \rightarrow \chi}
			{
			[\top]
			& 
			\top \rightarrow (\psi \rightarrow \chi)
			}
		}
	&
	\chi \rightarrow \alpha
	}
$$
The reason for introducing the systems $\{\mathcal{N}\textsf{BQL}_\textsf{CD}[n] : n \in \omega\}$ is that, since proofs in $\mathcal{N}\textsf{BQL}_\textsf{CD}^r$ contain only finitely many applications of $\rightarrow$-Elim, every proof in $\mathcal{N}\textsf{BQL}_\textsf{CD}^r$ belongs to $\mathcal{N}\textsf{BQL}_\textsf{CD}[n]$ for some $n$. So we can show that proofs in $\mathcal{N}\textsf{BQL}_\textsf{CD}^r$ reduce to proofs in $\mathcal{N}\textsf{BQL}_\textsf{CD}$ by showing that proofs in each $\mathcal{N}\textsf{BQL}_\textsf{CD}[n]$ reduce to proofs in $\mathcal{N}\textsf{BQL}_\textsf{CD}$, which can be done by induction on $n$. We write $\Gamma \vdash_{[n]} \phi$ iff there exists a proof of $\phi$ from $\Gamma$ in $\mathcal{N}\textsf{BQL}_\textsf{CD}[n]$. We write $\Gamma : \Sigma \vdash_{[n]} \phi$ iff there exists a proof of $\phi$ from $\Gamma \cup \Sigma$ in $\mathcal{N}\textsf{BQL}_\textsf{CD}[n]$ such that every open assumption which occurs unsafely in the proof belongs to $\Gamma$ (open assumptions which occur unsafely in the proof may or may not belong to $\Sigma$). We define $\bigwedge \emptyset = \top$.

\begin{lemma}[Relative Deduction] For $|\Sigma| < \omega$, $n \geq 0$: if $\Gamma : \Sigma \vdash_{[n]} \phi$ then $\Gamma \vdash_{[-1]} \Box^n(\bigwedge \Sigma \rightarrow \phi)$.
\end{lemma}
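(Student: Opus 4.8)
The plan is to argue by a primary induction on $n$ and, for each fixed $n$, a secondary induction on the height of the witnessing $\mathcal{N}\textsf{BQL}_\textsf{CD}[n]$-proof $\Pi$ of $\phi$ (equivalently, a single induction on the pair $(n,\mathrm{height}(\Pi))$ ordered lexicographically). The engine driving every case is one \emph{internalisation} step: given the last rule of $\Pi$, with premises $\alpha_1,\dots,\alpha_k$ and conclusion $\beta$, I first produce an inner derivation $\bigwedge\Sigma\rightarrow\alpha_1,\dots,\bigwedge\Sigma\rightarrow\alpha_k\vdash_{[-1]}\bigwedge\Sigma\rightarrow\beta$ inside $\mathcal{N}\textsf{BQL}_\textsf{CD}$, and then apply Regularity to prefix $\Box^n$ throughout, so that it composes with the data $\Gamma\vdash_{[-1]}\Box^n(\bigwedge\Sigma\rightarrow\alpha_j)$ returned by the induction. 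Two small facts recur: $\chi\vdash_{[-1]}\Box\chi$ (apply $\rightarrow$-Int to the one-node proof of $\chi$, discharging no occurrence of $\top$), whence $\chi\vdash_{[-1]}\Box^k\chi$ for every $k$; and, by persistence, $\mathcal{N}\textsf{BQL}_\textsf{CD}$ already proves weakening $\theta\rightarrow(\gamma\rightarrow\theta)$, since $\rightarrow$-Int may discharge zero occurrences.

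For the base of the secondary induction, $\Pi$ is a single leaf. If $\phi\in\Sigma$, then $\bigwedge\Sigma\rightarrow\phi$ is derivable by $\rightarrow$-Int over $\wedge$-Elims and $\Box^n$ is attached by Regularity with no premises; if $\phi\in\Gamma$, I keep $\phi$ open, pass from $\phi$ to $\bigwedge\Sigma\rightarrow\phi$ by $\rightarrow$-Int discharging no occurrence of $\bigwedge\Sigma$, and then to $\Box^n(\bigwedge\Sigma\rightarrow\phi)$ by iterating $\chi\vdash_{[-1]}\Box\chi$. All rules that neither discharge an assumption nor bind an eigenvariable --- $\top$-Int, $\bot$-Elim, $\wedge$-Int, $\wedge$-Elim, $\vee$-Int, $\forall$-Elim, $\exists$-Int, CD, and the Internal rules (Internal Transitivity, Internal $\wedge$-Int, Internal $\vee$-Elim, Internal $\forall$-Int, Internal $\exists$-Elim) --- are dispatched uniformly by internalisation: the matching implication $\bigwedge_j\alpha_j\rightarrow\beta$ is provable in $\mathcal{N}\textsf{BQL}_\textsf{CD}$ by $\rightarrow$-Int, I glue the hypotheses with Internal $\wedge$-Int and chain via Internal Transitivity, and Regularity restores $\Box^n$.

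The rules that discharge an assumption require moving the discharged formula $\gamma$ into the $\Sigma$-slot before invoking the induction hypothesis. This is legitimate because C5 forbids discharging unsafe occurrences, so the subproof witnesses $\Gamma:\Sigma\cup\{\gamma\}\vdash_{[n]}\theta$ (in the degenerate case where $\gamma$ also occurs unsafely and hence already lies in $\Gamma$, I instead keep it in $\Gamma$ and use the derivable weakening $\theta\rightarrow(\gamma\rightarrow\theta)$). For $\rightarrow$-Int the passage from $\Box^n(\bigwedge\Sigma\wedge\gamma\rightarrow\theta)$ to $\Box^n(\bigwedge\Sigma\rightarrow(\gamma\rightarrow\theta))$ is exactly $\wedge$-Release under Regularity; for $\vee$-Elim it is Distribution together with Internal $\vee$-Elim; and the two eigenvariable rules are handled by first generalising the eigenvariable (legitimate since it occurs in neither $\Gamma$ nor $\Sigma$ by C2/C3), then using $\forall$-Embedding to push $\Box^n$ through the quantifier, and finally Internal $\forall$-Int (for $\forall$-Int) or Internal $\exists$-Elim with Infinite Distribution (for $\exists$-Elim).

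The crucial case is the restricted modus ponens of $\mathcal{N}\textsf{BQL}_\textsf{CD}[n]$, and it is where both the primary induction and the $\Box^n$-bookkeeping earn their keep. Here $\phi$ is obtained from a left subproof of some $\alpha$ and a right subproof of $\alpha\rightarrow\phi$ lying in $\mathcal{N}\textsf{BQL}_\textsf{CD}[m]$ with $m<n$. Every open assumption of the right subproof sits in the right main subtree of this modus ponens, hence is unsafe and so belongs to $\Gamma$; thus the right subproof witnesses $\Gamma:\emptyset\vdash_{[m]}\alpha\rightarrow\phi$, and since $m<n$ the primary induction hypothesis (reading $m=-1$ as the right subproof already being a $\mathcal{N}\textsf{BQL}_\textsf{CD}$-proof from $\Gamma$) gives $\Gamma\vdash_{[-1]}\Box^{m+1}(\alpha\rightarrow\phi)$, which I weaken up to $\Box^n(\alpha\rightarrow\phi)$ via $\chi\vdash_{[-1]}\Box\chi$ using $m+1\leq n$. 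The left subproof is strictly smaller in the same system, so the secondary hypothesis gives $\Gamma\vdash_{[-1]}\Box^n(\bigwedge\Sigma\rightarrow\alpha)$; internalising Transitivity and applying Regularity then yields $\Gamma\vdash_{[-1]}\Box^n(\bigwedge\Sigma\rightarrow\phi)$. I expect the main obstacle to be not any individual inner derivation --- these are underwritten by the lemmas already in hand --- but the careful verification, case by case, that the $\Gamma/\Sigma$ partition is preserved when subproofs are extracted: that a discharged occurrence may be shifted into $\Sigma$ precisely because C5 blocks discharging unsafe ones, and that the entire right subtree of a modus ponens collapses into $\Gamma$ with empty $\Sigma$. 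It is exactly this bookkeeping that forces the $\Box^n$ prefix to track modus ponens depth, and that ultimately blocks the Curry derivation.
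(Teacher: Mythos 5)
Your proposal is correct and follows essentially the same route as the paper's proof: an outer induction on $n$ with an inner induction on the proof in $\mathcal{N}\textsf{BQL}_\textsf{CD}[n]$, internalising each rule via Regularity applied to the Internal rules, using $\wedge$-Release for $\rightarrow$-Int, Distribution and Infinite Distribution for $\vee$-Elim and $\exists$-Elim, $\forall$-Embedding for the eigenvariable rules, and invoking the outer hypothesis on the right subproof of the restricted modus ponens (which has empty $\Sigma$-slot since its open assumptions are unsafe) before padding $\Box^{m+1}$ up to $\Box^n$. The only divergences are cosmetic bookkeeping (the paper splits the $\Sigma=\emptyset$ subcases explicitly where $\bigwedge\emptyset=\top$ makes $\wedge$-Release inapplicable verbatim), which your own closing caveat already anticipates.
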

\begin{proof} Suppose the lemma holds for all $m$ such that $0 \leq m < n$ (the outer induction hypothesis).  We prove by induction on the construction of proofs in $\mathcal{N}\textsf{BQL}_\textsf{CD}[n]$ that the lemma holds for $n$ (the inner induction hypothesis).

\underline{Inner Base Cases} Suppose we have a one-line proof in $\mathcal{N}\textsf{BQL}_\textsf{CD}[n]$ of $\phi$ from $\Gamma : \Sigma$.  There are three cases to consider.

\underline{Case 1} $\phi = \top$. Then
$$
\infer
	{\Box^n (\bigwedge \Sigma \rightarrow \top)}
	{
	\infer
		{\rightarrow\hspace{-1mm}\text{-Ints}}
		{[\top]}
	}
$$
is a proof of $\Box^n(\bigwedge \Sigma \rightarrow \phi)$ from $\Gamma$ in $\mathcal{N}\textsf{BQL}_\textsf{CD}$.

\underline{Case 2} $\phi \in \Gamma$. Then
$$
\infer
	{\Box^n (\bigwedge \Sigma \rightarrow \phi)}
	{
	\infer
		{\rightarrow\hspace{-1mm}\text{-Ints}}
		{\phi}
	}
$$
is a proof of $\Box^n(\bigwedge \Sigma \rightarrow \phi)$ from $\Gamma$ in $\mathcal{N}\textsf{BQL}_\textsf{CD}$.

\underline{Case 3} $\phi \in \Sigma$. Then
$$
\infer
	{\Box^n(\bigwedge \Sigma \rightarrow \phi)}
	{
	\infer
		{\rightarrow\hspace{-1mm}\text{-Ints}}
		{
		\infer
			{\bigwedge \Sigma \rightarrow \phi}
			{
			\infer
				{\phi}
				{
				\infer
					{\wedge\text{-Elims}}
					{[\bigwedge \Sigma]}	
				}
			}
		}
	}
$$
is a proof of $\Box^n(\bigwedge \Sigma \rightarrow \phi)$ from $\Gamma$ in $\mathcal{N}\textsf{BQL}_\textsf{CD}$.

\underline{Inner Induction Steps} There are seven cases to consider.

\underline{Case 1} Suppose we have a proof of the form
$$
\infer
	{\phi}
	{
	\infer*
		{\alpha}
		{\vspace{1mm} \Gamma : \Sigma}
	}
$$
in $\mathcal{N}\textsf{BQL}_\textsf{CD}[n]$, where the final inference is $\bot$-Elim, $\wedge$-Elim, $\vee$-Int, Internal $\forall$-Int, Internal $\exists$-Elim, $\forall$-Elim, CD or $\exists$-Int.  Then, by the inner induction hypothesis and Regularity applied to Internal Transitivity, we can find a proof of the form
$$
\infer*
	{\Box^n(\bigwedge \Sigma \rightarrow \phi)}
	{
	\infer*
		{\Box^n(\bigwedge \Sigma \rightarrow \alpha)}
		{\Gamma}
	&
	\infer
		{\Box^n(\alpha \rightarrow \phi)}
		{
		\infer
			{\rightarrow\hspace{-1mm}\text{-Ints}}
			{
			\infer
				{\alpha \rightarrow \phi}
				{
				\infer
					{\phi}
					{[\alpha]}
				}
			}
		}
	}
$$
in $\mathcal{N}\textsf{BQL}_\textsf{CD}$.

\underline{Case 2} Suppose we have a proof of the form
$$
\infer
	{\phi}
	{
	\infer*
		{\alpha}
		{\vspace{1mm} \Gamma : \Sigma}
	&
	\infer*
		{\beta}
		{\vspace{1mm} \Gamma : \Sigma}
	}
$$
in $\mathcal{N}\textsf{BQL}_\textsf{CD}[n]$, where the final inference is $\wedge$-Int, Internal Transitivity, Internal $\wedge$-Int or Internal $\vee$-Elim. Then, by the inner induction hypothesis and Regularity applied to Internal $\wedge$-Int and Internal Transitivity, we can find a proof of the form
$$
\infer*
	{\Box^n(\bigwedge \Sigma \rightarrow \phi)}
	{
	\infer*
		{\Box^n(\bigwedge \Sigma \rightarrow \alpha \wedge \beta)}
		{
		\infer*
			{\Box^n(\bigwedge \Sigma \rightarrow \alpha)}
			{\Gamma}
		&
		\infer*
			{\Box^n(\bigwedge \Sigma \rightarrow \beta)}
			{\Gamma}
		}
	&
	\infer
		{\Box^n(\alpha \wedge \beta \rightarrow \phi)}
		{
		\infer
			{\rightarrow\hspace{-1mm}\text{-Ints}}
			{
			\infer
				{\alpha \wedge \beta \rightarrow \phi}
				{
				\infer
					{\phi}
					{
					\infer
						{\alpha}
						{[\alpha \wedge \beta]}
					&
					\infer
						{\beta}
						{[\alpha \wedge \beta]}
					}
				}
			}
		}
	}
$$
in $\mathcal{N}\textsf{BQL}_\textsf{CD}$.

\underline{Case 3} Suppose we have a proof of the form
$$
\infer
	{\phi}
	{
	\infer*
		{\alpha \vee \beta}
		{\vspace{1mm} \Gamma : \Sigma}		
	&
	\infer*
		{\phi}
		{\Gamma : \Sigma, [\alpha]}		
	&
	\infer*
		{\phi}
		{\Gamma : \Sigma, [\beta]}	
	}
$$
in $\mathcal{N}\textsf{BQL}_\textsf{CD}[n]$. There are two subcases to consider.

\underline{Subcase 1} $\Sigma = \emptyset$.  Then, by the inner induction hypothesis and Regularity applied to Internal $\vee$-Elim and Internal Transitivity, we can find a proof of the form 
$$
\infer*
	{\Box^n(\top \rightarrow \phi)}
	{
	\infer*
		{\Box^n(\top \rightarrow \alpha \vee \beta)}
		{\Gamma}
	&
	\infer*
		{\Box^n(\alpha \vee \beta \rightarrow \phi)}
		{
		\infer*
			{\Box^n(\alpha \rightarrow \phi)}
			{\Gamma}
		&
		\infer*
			{\Box^n(\beta \rightarrow \phi)}
			{\Gamma}
		}
	}
$$
in $\mathcal{N}\textsf{BQL}_\textsf{CD}$.

\underline{Subcase 2} $\Sigma \neq \emptyset$.  Then, by Distribution, the inner induction hypothesis and Regularity applied to Internal $\vee$-Elim and Internal Transitivity, we can find a proof of the form
$$
\infer*
	{\Box^n(\bigwedge \Sigma \wedge (\alpha \vee \beta) \rightarrow \phi)}
	{
	\infer*
		{\Box^n(\bigwedge \Sigma \wedge (\alpha \vee \beta) \rightarrow (\bigwedge \Sigma \wedge \alpha) \vee (\bigwedge \Sigma \wedge \beta))}
		{\emptyset}
	&
	\infer*
		{\Box^n((\bigwedge \Sigma \wedge \alpha) \vee (\bigwedge \Sigma \wedge \beta) \rightarrow \phi)}
		{
		\infer*
			{\Box^n(\bigwedge \Sigma \wedge \alpha \rightarrow \phi)}
			{\Gamma}		
		&
		\infer*
			{\Box^n(\bigwedge \Sigma \wedge \beta \rightarrow \phi)}
			{\Gamma}
		}
	}
$$
in $\mathcal{N}\textsf{BQL}_\textsf{CD}$.  So, by the inner induction hypothesis and Regularity applied to Internal $\wedge$-Int and Internal Transitivity, we can find a proof of the form 
$$
\infer*
	{\Box^n(\bigwedge \Sigma \rightarrow \phi)}
	{
	\infer*
		{\Box^n(\bigwedge \Sigma \rightarrow \bigwedge \Sigma \wedge (\alpha \vee \beta))}
		{
		\infer*
			{\Box^n(\bigwedge \Sigma \rightarrow \bigwedge \Sigma)}
			{\emptyset}
		&
		\infer*
			{\Box^n(\bigwedge \Sigma \rightarrow \alpha \vee \beta)}
			{\Gamma}	
		}
	&
	\infer*
		{\Box^n(\bigwedge \Sigma \wedge (\alpha \vee \beta) \rightarrow \phi)}
		{\Gamma}
	}
$$
in $\mathcal{N}\textsf{BQL}_\textsf{CD}$. 

\underline{Case 4} Suppose we have a proof of the form
$$
\infer
	{\phi \rightarrow \psi}
	{
	\infer*
		{\psi}
		{\Gamma : \Sigma, [\phi]}
	}
$$
in $\mathcal{N}\textsf{BQL}_\textsf{CD}[n]$. There are two subcases to consider.

\underline{Subcase 1} $\Sigma = \emptyset$. Then, by the inner induction hypothesis, we can find a proof of the form
$$
\infer[(\rightarrow\hspace{-1mm}\text{-Int})]
	{\Box^n(\top \rightarrow (\phi \rightarrow \psi))}
	{
	\infer*
		{\Box^n(\phi \rightarrow \psi)}
		{\Gamma}
	}
$$
in $\mathcal{N}\textsf{BQL}_\textsf{CD}$. 

\underline{Subcase 2} $\Sigma \neq \emptyset$.  Then, by the inner induction hypothesis and Regularity applied to $\wedge$-Release, we can find a proof of the form
$$
\infer*
	{\Box^n(\bigwedge \Sigma \rightarrow (\phi \rightarrow \psi))}
	{
	\infer*
		{\Box^n(\bigwedge \Sigma \wedge \phi \rightarrow \psi)}
		{\Gamma}
	}
$$
in $\mathcal{N}\textsf{BQL}_\textsf{CD}$.

\underline{Case 5} Suppose we have a proof of the form
$$
\infer
	{\phi}
	{
	\infer*
		{\alpha}
		{\vspace{1mm} \Gamma : \Sigma}
	&
	\fbox{
	\infer*[\mathcal{N}\textsf{BQL}_\textsf{CD}\text{$[m]$ for } m < n]
		{\alpha \rightarrow \phi}
		{\vspace{1mm} \Gamma : \emptyset}
	}
	}
$$
in $\mathcal{N}\textsf{BQL}_\textsf{CD}[n]$. There are two subcases to consider.

\underline{Subcase 1} $m = -1$.  Then, by the inner induction hypothesis and Regularity applied to Internal Transitivity, we can find a proof of the form
$$
\infer*
	{\Box^n(\bigwedge \Sigma \rightarrow \phi)}
	{
	\infer*
		{\Box^n(\bigwedge \Sigma \rightarrow \alpha)}
		{\Gamma}
	&
	\infer
		{\Box^n(\alpha \rightarrow \phi)}
		{
		\infer
			{\rightarrow\hspace{-1mm}\text{-Ints}}
			{
			\infer*
				{\alpha \rightarrow \phi}
				{\Gamma}
			}
		}
	}
$$
in $\mathcal{N}\textsf{BQL}_\textsf{CD}$.

\underline{Subcase 2} $m \geq 0$. Then, by the \textit{outer} induction hypothesis, $\Gamma \vdash_{[-1]} \Box^{m}(\top \rightarrow (\alpha \rightarrow \phi))$. Hence, by the \textit{inner} induction hypothesis and Regularity applied to Internal Transitivity, we can find a proof of the form
$$
\infer*
	{\Box^n(\bigwedge \Sigma \rightarrow \phi)}
	{
	\infer*
		{\Box^n(\bigwedge \Sigma \rightarrow \alpha)}
		{\Gamma}
	&
	\infer
		{\Box^n(\alpha \rightarrow \phi)}
		{
		\infer
			{\rightarrow\hspace{-1mm}\text{-Ints}}
			{
			\infer*
				{\Box^{m+1}(\alpha \rightarrow \phi)}
				{\Gamma}
			}
		}
	}
$$
in $\mathcal{N}\textsf{BQL}_\textsf{CD}$.

\underline{Case 6} Suppose we have a proof of the form
$$
\infer
	{\forall v \phi}
	{
	\infer*
		{\phi(a_i)}
		{\vspace{1mm} \Gamma^* : \Sigma^*}
	}
$$
in $\mathcal{N}\textsf{BQL}_\textsf{CD}[n]$, where $\Gamma^* \subseteq \Gamma$, $\Sigma^* \subseteq \Sigma$ and $a_i$ does not occur in $\Gamma^* \cup \Sigma^* \cup \{\phi\}$. Then, by the inner induction hypothesis, $\forall$-Embedding and Regularity applied to Internal $\forall$-Int and Internal Transitivity, we can find a proof of the form
$$
\infer*
	{\Box^n(\bigwedge \Sigma \rightarrow \forall v \phi)}
	{
	\infer*
		{\Box^n(\bigwedge \Sigma \rightarrow \bigwedge \Sigma^*)}
		{\emptyset}
	&
	\infer*
		{\Box^n(\bigwedge \Sigma^* \rightarrow \forall v \phi)}
		{
		\infer*
			{\Box^n\forall v(\bigwedge \Sigma^* \rightarrow \phi)}
			{
			\infer
				{\forall v\Box^n(\bigwedge \Sigma^* \rightarrow \phi)}
				{
				\infer*
					{\Box^n(\bigwedge \Sigma^* \rightarrow \phi(a_i))}
					{\Gamma^*}
				}
			}
		}
	}
$$
in $\mathcal{N}\textsf{BQL}_\textsf{CD}$.

\underline{Case 7} Suppose we have a proof of the form
$$
\infer
	{\phi}
	{
	\infer*
		{\exists v \psi}
		{\vspace{1mm} \Gamma : \Sigma}
	&
	\infer*
		{\phi}
		{\Gamma^* : \Sigma^*, [\psi(a_i)]}
	}
$$
in $\mathcal{N}\textsf{BQL}_\textsf{CD}[n]$, where $\Gamma^* \subseteq \Gamma$, $\Sigma^* \subseteq \Sigma$ and $a_i$ does not occur in $\Gamma^* \cup \Sigma^* \cup \{\phi, \psi\}$. There are two subcases to consider.

\underline{Subcase 1} $\Sigma^* = \emptyset$.  Then, by the inner induction hypothesis, $\forall$-Embedding and Regularity applied to Internal $\exists$-Elim and Internal Transitivity, we can find a proof of the form
$$
\infer*
	{\Box^n(\bigwedge \Sigma \rightarrow \phi)}
	{
	\infer*
		{\Box^n(\bigwedge \Sigma \rightarrow \exists v \psi)}
		{\Gamma}
	&
	\infer*
		{\Box^n(\exists v \psi \rightarrow \phi)}
		{
		\infer*
			{\Box^n \forall v(\psi \rightarrow \phi)}
			{
			\infer
				{\forall v \Box^n(\psi \rightarrow \phi)}
				{
				\infer*
					{\Box^n(\psi(a_i) \rightarrow \phi)}
					{\Gamma^*}
				}
			}
		}
	}
$$
in $\mathcal{N}\textsf{BQL}_\textsf{CD}$.

\underline{Subcase 2} $\Sigma^* \neq \emptyset$.  Then, by the inner induction hypothesis, Infinite Distribution, $\forall$-Embedding and Regularity applied to Internal $\exists$-Elim and Internal Transitivity, we can find a proof of the form
$$
\infer*
	{\Box^n(\bigwedge \Sigma^* \wedge \exists v \psi \rightarrow \phi)}
	{
	\infer*
		{\Box^n(\bigwedge \Sigma^* \wedge \exists v \psi \rightarrow \exists v(\bigwedge \Sigma^* \wedge \psi))}
		{\emptyset}
	&
	\infer*
		{\Box^n(\exists v(\bigwedge \Sigma^* \wedge \psi) \rightarrow \phi)}
		{
		\infer*
			{\Box^n \forall v(\bigwedge \Sigma^* \wedge \psi \rightarrow \phi)}
			{
			\infer
				{\forall v \Box^n(\bigwedge \Sigma^* \wedge \psi \rightarrow \phi)}
				{
				\infer*
					{\Box^n(\bigwedge \Sigma^* \wedge \psi(a_i) \rightarrow \phi)}
					{\Gamma^*}
				}
			}
		}
	}
$$
in $\mathcal{N}\textsf{BQL}_\textsf{CD}$.  So, by Regularity applied to Internal Transitivity, we can find a proof of the form
$$
\infer*
	{\Box^n(\bigwedge \Sigma \wedge \exists v \psi \rightarrow \phi)}
	{
	\infer*
		{\Box^n(\bigwedge \Sigma \wedge \exists v \psi \rightarrow \bigwedge \Sigma^* \wedge \exists v \psi)}
		{\emptyset}
	&
	\infer*
		{\Box^n(\bigwedge \Sigma^* \wedge \exists v \psi \rightarrow \phi)}
		{\Gamma^*}
	}
$$
in $\mathcal{N}\textsf{BQL}_\textsf{CD}$. Hence, by the inner induction hypothesis and Regularity applied to Internal $\wedge$-Int and Internal Transitivity, we can find a proof of the form
$$
\infer*
	{\Box^n(\bigwedge \Sigma \rightarrow \phi)}
	{
	\infer*
		{\Box^n(\bigwedge \Sigma \rightarrow \bigwedge \Sigma \wedge \exists v \psi)}
		{
		\infer*
			{\Box^n(\bigwedge \Sigma \rightarrow \bigwedge \Sigma)}
			{\emptyset}
		&
		\infer*
			{\Box^n(\bigwedge \Sigma \rightarrow \exists v \psi)}
			{\Gamma}
		}
	&
	\infer*
		{\Box^n(\bigwedge \Sigma \wedge \exists v \psi \rightarrow \phi)}
		{\Gamma^*}
	}
$$
in $\mathcal{N}\textsf{BQL}_\textsf{CD}$. 
\end{proof}

\begin{theorem}[Reduction] If $\Gamma \vdash \phi$ then $\Gamma \vdash_{[-1]} \Box^n\phi$ for some $n$.
\end{theorem}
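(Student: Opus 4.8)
The plan is to obtain the Reduction Theorem as the $\Sigma = \emptyset$ instance of the Relative Deduction Lemma, so the only real work is to check that an arbitrary proof in $\mathcal{N}\textsf{BQL}_\textsf{CD}^r$ can be located in one of the stratified systems $\mathcal{N}\textsf{BQL}_\textsf{CD}[n]$. So I would start by fixing a proof $\Pi$ of $\phi$ from $\Gamma$ in $\mathcal{N}\textsf{BQL}_\textsf{CD}^r$, and establish two facts: (i) $\Pi$ is a proof in $\mathcal{N}\textsf{BQL}_\textsf{CD}[n]$ for some $n \geq 0$, and (ii) $\Gamma : \emptyset \vdash_{[n]} \phi$ in the sense required by Relative Deduction.

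For (i), I would stratify $\Pi$ by a rank. Set $\mathrm{rk}(\Pi') = -1$ when the subproof $\Pi'$ contains no application of $\rightarrow$-Elim, and otherwise set $\mathrm{rk}(\Pi') = 1 + \max\{\mathrm{rk}(\Pi'_R)\}$, where $\Pi'_R$ ranges over the right main subproofs of the $\rightarrow$-Elim applications occurring in $\Pi'$. Since every proof tree is finite, there are only finitely many such applications and each $\Pi'_R$ is a proper subtree, so an induction on the construction of $\Pi'$ shows $\mathrm{rk}(\Pi')$ is a well-defined element of $\{-1\} \cup \omega$. A second induction then shows that any proof of rank $r$ is a proof in $\mathcal{N}\textsf{BQL}_\textsf{CD}[r]$: each of its $\rightarrow$-Elim applications has a right main subproof of rank at most $r - 1$, which by the inductive hypothesis lies in $\mathcal{N}\textsf{BQL}_\textsf{CD}[m]$ for some $m < r$, which is exactly what the bounded modus ponens rule of $\mathcal{N}\textsf{BQL}_\textsf{CD}[r]$ permits. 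Taking $n = \max(\mathrm{rk}(\Pi), 0)$ and using the obvious inclusion $\mathcal{N}\textsf{BQL}_\textsf{CD}[k] \subseteq \mathcal{N}\textsf{BQL}_\textsf{CD}[n]$ for $-1 \leq k \leq n$ gives (i).

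Fact (ii) is immediate: every open assumption of $\Pi$ belongs to $\Gamma$, so in particular every open assumption occurring unsafely belongs to $\Gamma$, whence $\Gamma : \emptyset \vdash_{[n]} \phi$ with empty side-context. Feeding this into the Relative Deduction Lemma would give $\Gamma \vdash_{[-1]} \Box^n(\bigwedge \emptyset \rightarrow \phi)$. Since $\bigwedge \emptyset = \top$ and $\top \rightarrow \phi$ is by definition $\Box \phi$, this is $\Gamma \vdash_{[-1]} \Box^{n+1}\phi$, which has the required form.

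All of the genuine content has already been absorbed into the Relative Deduction Lemma, so I do not expect a serious obstacle. The one step needing attention is the stratification in (i): I need to be sure that unrestricted modus ponens in $\mathcal{N}\textsf{BQL}_\textsf{CD}^r$ cannot generate an unbounded regress of nested right subproofs, but this follows at once from the finiteness of the proof tree, which caps the nesting depth of $\rightarrow$-Elim. I would also check in passing that reclassifying $\Pi$ as a proof in $\mathcal{N}\textsf{BQL}_\textsf{CD}[n]$ leaves the constraints C1--C5 intact, but these are structural conditions on the underlying tree and are unaffected by the change of ambient system.
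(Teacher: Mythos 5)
Your proposal is correct and follows essentially the same route as the paper: locate the given proof in some stratum $\mathcal{N}\textsf{BQL}_\textsf{CD}[n]$ using the finiteness of the proof tree, note that $\Gamma : \emptyset \vdash_{[n]} \phi$, and apply Relative Deduction with $\Sigma = \emptyset$ to obtain $\Gamma \vdash_{[-1]} \Box^{n+1}\phi$. The only difference is that you spell out the stratification via an explicit rank function, which the paper compresses into the single observation that proofs contain finitely many applications of $\rightarrow$-Elim.
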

\begin{proof} Suppose $\Gamma \vdash \phi$.  Then, since proofs in $\mathcal{N}\textsf{BQL}_\textsf{CD}^r$ contain at most finitely many applications of $\rightarrow$-Elim, $\Gamma : \emptyset \vdash_{[n]} \phi$ for some $n \geq -1$. If $n = -1$ then we're done. Suppose $n \geq 0$. Then, by Relative Deduction, $\Gamma \vdash_{[-1]} \Box^{n + 1} \phi$. 
\end{proof}

\begin{theorem}[Unrestricted $\vee$-Elim] Suppose (i) $\Gamma \vdash \phi \vee \psi$, (ii) $\Gamma, \phi \vdash \chi$ and (iii) $\Gamma, \psi \vdash \chi$. Then $\Gamma \vdash \chi$.
\end{theorem}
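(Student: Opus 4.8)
The plan is to route the argument through the $\rightarrow$-Elim–free system $\mathcal{N}\textsf{BQL}_\textsf{CD}$, where (as noted in the excerpt) proofs contain no unsafe occurrences and hence assumptions may be discharged without restriction. The obstacle to applying the ordinary $\vee$-Elim rule directly to (i)--(iii) is precisely C5: in the given proofs witnessing $\Gamma, \phi \vdash \chi$ and $\Gamma, \psi \vdash \chi$, the assumptions $\phi$ and $\psi$ may occur unsafely, so they cannot be discharged by $\vee$-Elim. My strategy is to first replace these proofs by $\mathcal{N}\textsf{BQL}_\textsf{CD}$-proofs (which buys safety at the cost of boxes on the conclusion), apply $\vee$-Elim there, and then remove the boxes back in the full system using $\top$.

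First I would apply the Reduction theorem to hypotheses (ii) and (iii), obtaining $\Gamma, \phi \vdash_{[-1]} \Box^{b}\chi$ and $\Gamma, \psi \vdash_{[-1]} \Box^{c}\chi$ for some $b, c$. Setting $e = \max(b,c)$, I would pad both conclusions up to $\Box^{e}\chi$: from a $\mathcal{N}\textsf{BQL}_\textsf{CD}$-proof of $\Box^{k}\chi$ one obtains a proof of $\Box^{k+1}\chi = \top \rightarrow \Box^{k}\chi$ by a single application of $\rightarrow$-Int in which the assumption $\top$ is discharged vacuously (legitimate because discharge is unrestricted in $\mathcal{N}\textsf{BQL}_\textsf{CD}$). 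This yields $\Gamma, \phi \vdash_{[-1]} \Box^{e}\chi$ and $\Gamma, \psi \vdash_{[-1]} \Box^{e}\chi$, witnessed by proofs in which, crucially, \emph{no} occurrence of $\phi$ or $\psi$ is unsafe.

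Next I would assemble a proof in the full system $\mathcal{N}\textsf{BQL}_\textsf{CD}^{r}$. Taking the proof of $\phi \vee \psi$ from $\Gamma$ furnished by (i) as the major premise, and the two padded $\mathcal{N}\textsf{BQL}_\textsf{CD}$-proofs of $\Box^{e}\chi$ as the minor subproofs, I would apply $\vee$-Elim, discharging the occurrences of $\phi$ and $\psi$. Since those occurrences are safe, C5 is respected, and the result is $\Gamma \vdash \Box^{e}\chi$. Finally, $e$ successive applications of $\rightarrow$-Elim against $\top$ (obtained by $\top$-Int) strip the boxes, each step passing from $\Box^{k}\chi = \top \rightarrow \Box^{k-1}\chi$ to $\Box^{k-1}\chi$; since no assumption is discharged at these steps, they too respect C5. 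This gives $\Gamma \vdash \chi$. As usual when chaining proofs containing $\forall$-Int or $\exists$-Elim, I would rename eigenvariables where necessary to preserve C2 and C3.

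The main obstacle---really the conceptual heart of the argument---is the second step: recognizing that Reduction converts the potentially unsafe proofs of (ii) and (iii) into unsafe-free $\mathcal{N}\textsf{BQL}_\textsf{CD}$-proofs, thereby making the otherwise-blocked $\vee$-Elim legal, and that the price paid (a boxed conclusion) is harmless because boxes can be peeled off by modus ponens with $\top$ in the reflexive-root system. Everything else is bookkeeping.
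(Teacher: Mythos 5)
Your proof is correct and follows essentially the same route as the paper's: apply Reduction to (ii) and (iii), pad the smaller box-depth up to the larger via $\rightarrow$-Int, apply $\vee$-Elim (now legal because the subproofs live in $\mathcal{N}\textsf{BQL}_\textsf{CD}$ and so contain no unsafe occurrences of $\phi$ or $\psi$), and strip the boxes with $\top$-Int and $\rightarrow$-Elim. The only difference is expository: you spell out the padding and C5-checking details that the paper leaves implicit.
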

\begin{proof} By Reduction: $\Gamma, \phi \vdash_{[-1]} \Box^n \chi$ and $\Gamma, \psi \vdash_{[-1]} \Box^m \chi$ for some $n$, $m$. Suppose without loss that $n \leq m$. Then, by repeated applications of $\rightarrow$-Int, we get $\Gamma, \phi \vdash_{[-1]} \Box^m \chi$. Since all open occurrences of $\phi$ and $\psi$ in the witnessing proofs are safe, an application of $\vee$-Elim gives $\Gamma \vdash \Box^m \chi$. Repeated applications of $\top$-Int and $\rightarrow$-Elim then give $\Gamma \vdash \chi$.
\end{proof}

\begin{theorem}[Unrestricted $\exists$-Elim] Suppose (i) $\Gamma \vdash \exists v \phi$ and (ii) $\Sigma, \phi(a_i) \vdash \psi$ for $a_i \not \in \Sigma \cup \{\phi, \psi\}$. Then $\Gamma, \Sigma \vdash \psi$.
\end{theorem}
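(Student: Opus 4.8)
The plan is to mimic the proof of Unrestricted $\vee$-Elim, exploiting the Reduction Theorem to push the offending subproof into the $\rightarrow$-Elim-free system $\mathcal{N}\textsf{BQL}_\textsf{CD}$, where every occurrence of an open assumption is automatically safe and hence dischargeable without restriction. First I would apply Reduction to hypothesis (ii), obtaining $\Sigma, \phi(a_i) \vdash_{[-1]} \Box^n \psi$ for some $n$. Since $\mathcal{N}\textsf{BQL}_\textsf{CD}$ contains no $\rightarrow$-Elim, this witnessing proof has no unsafe positions at all, so every occurrence of the assumption $\phi(a_i)$ is safe and C5 poses no obstacle to discharging it.

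Next I would reassemble the native $\exists$-Elim rule with the boxed conclusion. Taking the $\vdash$-proof of $\exists v \phi$ from $\Gamma$ supplied by (i) as the major premise, and the reduced $\vdash_{[-1]}$-proof of $\Box^n\psi$ from $\Sigma, \phi(a_i)$ as the minor subproof, I would apply $\exists$-Elim, discharging all occurrences of $\phi(a_i)$ (legitimate by C4 and C5, since every such occurrence is safe). This yields a $\vdash$-proof of $\Box^n\psi$ from $\Gamma, \Sigma$. Finally, I would strip off the boxes: $n$ rounds of $\top$-Int followed by $\rightarrow$-Elim convert $\Box^n\psi$ into $\psi$. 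Crucially, none of the assumptions in $\Gamma, \Sigma$ is discharged at this last stage, so C5 is respected even though those occurrences now sit inside right main subproofs of modus ponens and have thereby become unsafe. The upshot is $\Gamma, \Sigma \vdash \psi$.

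The step requiring the most care --- and the only genuine difference from Unrestricted $\vee$-Elim, in whose proof no eigenvariables appear --- is verifying the eigenvariable constraint C3 for the reassembled $\exists$-Elim application. I must check that $a_i$ occurs neither in $\phi$, nor in the conclusion $\Box^n\psi$, nor in any open assumption of the minor subproof other than $\phi(a_i)$. The first holds by hypothesis; the conclusion is harmless because $\Box^n\psi$ merely prefixes $\psi$ with occurrences of $\top$ and $\rightarrow$ and $a_i \not\in \psi$; and the remaining minor-subproof assumptions lie in $\Sigma$, which by hypothesis omits $a_i$. The one point to confirm is that the Reduction procedure (via Relative Deduction) does not smuggle $a_i$ back in as an internal eigenvariable for some $\forall$-Int or $\exists$-Elim; this is dispatched by the standard renaming convention noted before Regularity, under which internal eigenvariables may always be chosen fresh, so that in the reduced proof $a_i$ enters only through $\phi(a_i)$. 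I would also note that C3 imposes no condition on the major premise, so it is immaterial whether $a_i$ happens to occur in $\Gamma$: the arbitrariness of the witness is secured entirely within the minor subproof, exactly as the semantics demands.
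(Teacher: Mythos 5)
Your proposal is correct and follows essentially the same route as the paper's own proof: apply Reduction to obtain $\Sigma, \phi(a_i) \vdash_{[-1]} \Box^n\psi$, note that all occurrences of $\phi(a_i)$ in the $\rightarrow$-Elim-free witnessing proof are safe so $\exists$-Elim may be applied, and then strip the boxes with $\top$-Int and $\rightarrow$-Elim. Your additional care over the eigenvariable constraint C3 and the freshness of internal eigenvariables is a welcome elaboration of details the paper leaves implicit, but it does not change the argument.
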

\begin{proof} By Reduction: $\Sigma, \phi(a_i) \vdash_{[-1]} \Box^n \psi$ for some $n$. Since all open occurrences of $\phi(a_i)$ in the witnessing proof are safe, an application of $\exists$-Elim gives $\Gamma, \Sigma \vdash \Box^n \psi$. Repeated applications of $\top$-Int and $\rightarrow$-Elim then give $\Gamma, \Sigma \vdash \psi$. 
\end{proof}

\subsection{$\textsf{BQL}_\textsf{CD}^r = \textsf{TJK}^{d+}$}

We now show that $\mathcal{N}\textsf{BQL}_\textsf{CD}^r$ is in fact equivalent to the original axiomatization of $\textsf{TJK}^{d+}$ (see \S2). We write $\Gamma \vdash_{\textsf{TJK}^{d+}} \phi$ iff $\phi$ is derivable from $\Gamma$ in the original axiomatization.

\begin{lemma} $\phi \wedge \psi \rightarrow \chi \vdash_{\textsf{TJK}^{d+}}\phi \rightarrow (\psi \rightarrow \chi)$.
\end{lemma}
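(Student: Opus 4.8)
The plan is to derive $\phi \rightarrow (\psi \rightarrow \chi)$ entirely by chaining instances of the $\textsf{TJK}^{d+}$ axioms with $\rightarrow$-Elim and $\wedge$-Int, treating the hypothesis $\phi \wedge \psi \rightarrow \chi$ as a single standing premise. The whole argument funnels through one intermediate conditional, namely $\phi \rightarrow (\psi \rightarrow \phi \wedge \psi)$. Once that is in hand, the move of replacing the inner consequent $\phi \wedge \psi$ by $\chi$ is delivered by the prefixing axiom fed with the hypothesis, and a single transitivity step closes the derivation.

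First I would build the intermediate conditional $\phi \rightarrow (\psi \rightarrow \phi \wedge \psi)$. The weakening axiom gives $\phi \rightarrow (\psi \rightarrow \phi)$ directly. For the companion conditional $\phi \rightarrow (\psi \rightarrow \psi)$, I start from the identity axiom $\psi \rightarrow \psi$ and apply the weakening instance $(\psi \rightarrow \psi) \rightarrow (\phi \rightarrow (\psi \rightarrow \psi))$ by $\rightarrow$-Elim. Conjoining the two results by $\wedge$-Int and then firing the Internal $\wedge$-Introduction axiom instance $(\phi \rightarrow (\psi \rightarrow \phi)) \wedge (\phi \rightarrow (\psi \rightarrow \psi)) \rightarrow (\phi \rightarrow ((\psi \rightarrow \phi) \wedge (\psi \rightarrow \psi)))$ yields $\phi \rightarrow ((\psi \rightarrow \phi) \wedge (\psi \rightarrow \psi))$. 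A second instance of the same axiom, $(\psi \rightarrow \phi) \wedge (\psi \rightarrow \psi) \rightarrow (\psi \rightarrow \phi \wedge \psi)$, can then be composed with this via the transitivity axiom (conjoin the two conditionals with $\wedge$-Int, then apply $\rightarrow$-Elim against the transitivity instance) to produce $\phi \rightarrow (\psi \rightarrow \phi \wedge \psi)$.

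Second, I would run the hypothesis through prefixing: the instance $(\phi \wedge \psi \rightarrow \chi) \rightarrow ((\psi \rightarrow \phi \wedge \psi) \rightarrow (\psi \rightarrow \chi))$ together with the premise $\phi \wedge \psi \rightarrow \chi$ gives, by $\rightarrow$-Elim, the conditional $(\psi \rightarrow \phi \wedge \psi) \rightarrow (\psi \rightarrow \chi)$. Finally, transitivity applied to $\phi \rightarrow (\psi \rightarrow \phi \wedge \psi)$ and $(\psi \rightarrow \phi \wedge \psi) \rightarrow (\psi \rightarrow \chi)$ delivers the target $\phi \rightarrow (\psi \rightarrow \chi)$.

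There is no serious obstacle here; the only delicate point is the assembly of $\phi \rightarrow (\psi \rightarrow \phi \wedge \psi)$, where one must get the conjunction $\phi \wedge \psi$ to appear inside a nested conditional governed successively by $\phi$ and $\psi$. This is exactly what forces the double use of the Internal $\wedge$-Introduction axiom (once under the outer antecedent $\phi$, once under the inner antecedent $\psi$) stitched together by transitivity; everything after that is the mechanical prefixing-plus-transitivity composition with the hypothesis.
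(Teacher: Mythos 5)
Your proposal is correct and follows essentially the same route as the paper's own proof: it builds $\phi \rightarrow (\psi \rightarrow \phi \wedge \psi)$ from weakening, identity, and two instances of the internal $\wedge$-introduction axiom stitched by transitivity, then applies prefixing to the hypothesis and closes with one more transitivity step. The only difference is that you spell out the steps the paper compresses into starred inferences.
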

\begin{proof}
$$
\infer*
	{\phi \rightarrow (\psi \rightarrow \phi \wedge \psi)}
	{
	\infer*
		{\phi \rightarrow (\psi \rightarrow \phi) \wedge (\psi \rightarrow \psi)}
		{
		[\phi \rightarrow (\psi \rightarrow \phi)]
		&
		\infer
			{\phi \rightarrow (\psi \rightarrow \psi)}
			{
			[\psi \rightarrow \psi]
			&
			[(\psi \rightarrow \psi) \rightarrow (\phi \rightarrow (\psi \rightarrow \psi))]
			}
		}
	}
$$

$$
\infer*
	{\phi \rightarrow (\psi \rightarrow \chi)}
	{
	\infer*
		{\phi \rightarrow (\psi \rightarrow \phi \wedge \psi)}
		{}
	&
	\infer
		{(\psi \rightarrow \phi \wedge \psi) \rightarrow (\psi \rightarrow \chi)}
		{
		\phi \wedge \psi \rightarrow \chi
		&
		[(\phi \wedge \psi \rightarrow \chi) \rightarrow ((\psi \rightarrow \phi \wedge \psi) \rightarrow (\psi \rightarrow \chi))]
		}
	}
$$
\end{proof}

\begin{lemma} For $|\Gamma| < \omega$: if $\Gamma \vdash_{[-1]} \phi$ then $\vdash_{\textsf{TJK}^{d+}} \bigwedge \Gamma \rightarrow \phi$.
\end{lemma}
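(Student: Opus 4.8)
The plan is to induct on the construction of the witnessing $\mathcal{N}\textsf{BQL}_\textsf{CD}$-proof, establishing the slightly more general claim that whenever a subproof $\Pi$ derives $\psi$ with undischarged assumptions contained in a finite set $\Delta$, we have $\vdash_{\textsf{TJK}^{d+}}\bigwedge\Delta\rightarrow\psi$; the lemma is the special case $\Delta=\Gamma$. Because $\mathcal{N}\textsf{BQL}_\textsf{CD}$ lacks $\rightarrow$-Elim, its proofs contain no unsafe occurrences and discharge is unrestricted, so the only bookkeeping is the finite set of currently-open assumptions. Before the induction I would record the routine $\textsf{B}^{d+}$-theorems I rely on throughout: the identity $\phi\rightarrow\phi$, the projections $\bigwedge\Delta\rightarrow\delta$ and more generally $\bigwedge\Delta\rightarrow\bigwedge\Delta'$ for $\Delta'\subseteq\Delta$, provable commutativity, associativity and idempotence of $\wedge$ (so that $\bigwedge(\Delta\cup\{\alpha\})$ and $\bigwedge\Delta\wedge\alpha$ are interderivable as conditionals, and passing to a larger assumption set is harmless), and the derived transitivity rule obtained from the transitivity axiom by $\wedge$-Int and modus ponens (recall modus ponens is present in the axiomatic system).

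The base cases ($\top$-Int, an assumption leaf in $\Delta$) and the one-premise, non-discharging steps ($\bot$-Elim, $\wedge$-Elim, $\vee$-Int, $\forall$-Elim, CD, $\exists$-Int, Internal $\forall$-Int, Internal $\exists$-Elim) all reduce to transitivity: the induction hypothesis gives $\bigwedge\Delta\rightarrow\alpha$ and the matching boxed axiom gives $\alpha\rightarrow\psi$, which compose. The two-premise, non-discharging steps ($\wedge$-Int, Internal Transitivity, Internal $\wedge$-Int, Internal $\vee$-Elim) are handled by first merging the two induction hypotheses via the internal $\wedge$-introduction axiom $(\chi\rightarrow\phi)\wedge(\chi\rightarrow\psi)\rightarrow(\chi\rightarrow\phi\wedge\psi)$ into a single conditional with antecedent $\bigwedge\Delta$, then composing (where needed) with the relevant $\textsf{TJK}^{d+}$-axiom by transitivity. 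The $\forall$-Int step uses the generalization rule of the axiomatic system: from $\vdash_{\textsf{TJK}^{d+}}\bigwedge\Delta\rightarrow\phi(a_i)$, with the eigenvariable constraint guaranteeing $a_i\notin\Delta$ and the target being a premise-free theorem, I apply $\forall$-Int to obtain $\forall v(\bigwedge\Delta\rightarrow\phi)$ and then pull the quantifier past the antecedent by the axiom $\forall v(\phi\rightarrow\psi)\rightarrow(\phi\rightarrow\forall v\psi)$.

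The discharging rules carry the real work. For $\rightarrow$-Int, the induction hypothesis on the subproof gives (after a projection folding any vacuous discharge into the $\Delta$-side) $\vdash_{\textsf{TJK}^{d+}}\bigwedge\Delta\wedge\phi\rightarrow\psi$, and the immediately preceding lemma (the $\textsf{TJK}^{d+}$-internal form of $\wedge$-Release) converts this directly to $\bigwedge\Delta\rightarrow(\phi\rightarrow\psi)$. For $\vee$-Elim I would distribute: from the minor-premise hypotheses $\bigwedge\Delta\wedge\alpha\rightarrow\chi$ and $\bigwedge\Delta\wedge\beta\rightarrow\chi$ the $\vee$-elimination axiom gives $(\bigwedge\Delta\wedge\alpha)\vee(\bigwedge\Delta\wedge\beta)\rightarrow\chi$, while the distribution axiom together with $\bigwedge\Delta\rightarrow\alpha\vee\beta$ gives $\bigwedge\Delta\rightarrow(\bigwedge\Delta\wedge\alpha)\vee(\bigwedge\Delta\wedge\beta)$, and transitivity finishes.

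The hardest case, and the one I expect to be the main obstacle, is $\exists$-Elim, precisely because the naive move of combining $\bigwedge\Delta\rightarrow\exists v\phi$ with some $\exists v\phi\rightarrow(\ldots\rightarrow\psi)$ would require the assertion principle $\chi\wedge(\chi\rightarrow\psi)\rightarrow\psi$, which is \emph{not} a theorem of $\textsf{TJK}^{d+}$ (indeed $\not\models\phi\wedge(\phi\rightarrow\psi)\rightarrow\psi$, as observed in \S3). I would instead mirror Case 7 of the Relative Deduction proof: letting $\Delta^*$ be the open assumptions of the right subproof other than the discharged $\phi(a_i)$, so that $a_i\notin\Delta^*\cup\{\phi,\psi\}$, the induction hypothesis yields $\bigwedge\Delta^*\wedge\phi(a_i)\rightarrow\psi$; generalizing by $\forall$-Int and then applying the infinite-distribution axiom $\bigwedge\Delta^*\wedge\exists v\phi\rightarrow\exists v(\bigwedge\Delta^*\wedge\phi)$ together with the $\exists$-elimination axiom $\forall v(\theta\rightarrow\psi)\rightarrow(\exists v\theta\rightarrow\psi)$ produces $\bigwedge\Delta^*\wedge\exists v\phi\rightarrow\psi$ with no assertion step. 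Finally I merge this with the left hypothesis $\bigwedge\Delta\rightarrow\exists v\phi$ and the projection $\bigwedge\Delta\rightarrow\bigwedge\Delta^*$ by internal $\wedge$-Int to get $\bigwedge\Delta\rightarrow\bigwedge\Delta^*\wedge\exists v\phi$, and compose by transitivity to reach $\bigwedge\Delta\rightarrow\psi$. The degenerate subcase $\Delta^*=\emptyset$ is simpler (generalize $\phi(a_i)\rightarrow\psi$ directly, apply the $\exists$-elimination axiom, and compose), and throughout I must keep the enumeration and associativity conventions for $\bigwedge$ consistent so that the infinite-distribution and projection steps line up.
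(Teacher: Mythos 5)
Your proposal is correct and follows essentially the same route as the paper, which proves this lemma by induction on the construction of $\mathcal{N}\textsf{BQL}_\textsf{CD}$-proofs and singles out $\rightarrow$-Int as the only non-obvious case, handled exactly as you do via the preceding $\wedge$-Release lemma for $\vdash_{\textsf{TJK}^{d+}}$. Your treatment of $\exists$-Elim through generalization, infinite distribution and the axiom $\forall v(\theta\rightarrow\psi)\rightarrow(\exists v\theta\rightarrow\psi)$ rather than the axiomatic $\exists$-Elim rule also matches what the paper implicitly requires for its later observation that the argument goes through in $\textsf{TJK}^+$.
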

\begin{proof} By induction on the construction of proofs in $\mathcal{N}\textsf{BQL}_\textsf{CD}$. The only non-obvious step is $\rightarrow$-Int, where we appeal to the previous lemma.
\end{proof}

\begin{theorem} $\Gamma \vdash \phi$ iff $\Gamma \vdash_{\textsf{TJK}^{d+}} \phi$.
\end{theorem}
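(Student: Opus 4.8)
The plan is to prove the two inclusions separately, leaning on the Reduction theorem and the unrestricted elimination theorems for one direction and on the two preceding lemmas for the other. The rules shared by $\mathcal{N}\textsf{BQL}_\textsf{CD}^r$ and the original axiomatization transfer trivially, so the work concentrates on the $\rightarrow$-related axioms and on the two discharge rules.

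For the direction $\Gamma \vdash_{\textsf{TJK}^{d+}} \phi \Rightarrow \Gamma \vdash \phi$, I would argue by induction on the $\textsf{TJK}^{d+}$-derivation, showing that $\vdash$ contains every axiom of $\textsf{TJK}^{d+}$ and is closed under every rule. Each boxed axiom and each additional axiom (transitivity, suffixing, prefixing, weakening) is a $\vdash$-theorem via a short conditional proof: for instance, weakening $\phi \rightarrow (\psi \rightarrow \phi)$ is obtained by assuming $\phi$, then $\psi$, reiterating $\phi$, and discharging; transitivity and the $\textsf{B}^{d+}$ rule $\frac{\phi \to \psi \quad \chi \to \gamma}{(\psi \to \chi) \to (\phi \to \gamma)}$ are obtained from Internal Transitivity under a single conditional-proof assumption; and the distribution and infinite-distribution axioms follow from the Distribution and Infinite Distribution lemmas via $\rightarrow$-Int. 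The essential observation is that in each of these derivations the discharged assumption is used only as a premise of an elimination or internal rule, never as the right premise of $\rightarrow$-Elim, so C5 is never violated (indeed none of these derivations uses $\rightarrow$-Elim at all, so they live entirely in $\mathcal{N}\textsf{BQL}_\textsf{CD}$).

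The one delicate point is closure under $\vee$-Elim and $\exists$-Elim. In the original system these discharge rules are unrestricted, whereas in $\mathcal{N}\textsf{BQL}_\textsf{CD}^r$ they are subject to C5, so a naive translation might try to discharge an occurrence that has become unsafe beneath an application of $\rightarrow$-Elim. This is exactly what the Unrestricted $\vee$-Elim and Unrestricted $\exists$-Elim theorems already resolve, since they establish that $\vdash$ is closed under the unrestricted forms of these rules. Invoking them (after renaming eigenvariables as needed to keep the grafted subproofs legal) completes the induction. For the converse $\Gamma \vdash \phi \Rightarrow \Gamma \vdash_{\textsf{TJK}^{d+}} \phi$, I would apply the Reduction theorem to obtain $\Gamma \vdash_{[-1]} \Box^n \phi$ for some $n$, witnessed by a proof using only finitely many premises $\Gamma_0 \subseteq \Gamma$. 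The preceding lemma then yields $\vdash_{\textsf{TJK}^{d+}} \bigwedge \Gamma_0 \rightarrow \Box^n \phi$; from $\Gamma_0$ I derive $\bigwedge \Gamma_0$ by $\wedge$-Int and detach $\Box^n \phi$ by modus ponens, after which repeatedly introducing $\top$ and detaching strips the $n$ boxes to give $\Gamma_0 \vdash_{\textsf{TJK}^{d+}} \phi$, hence $\Gamma \vdash_{\textsf{TJK}^{d+}} \phi$.

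I expect the main obstacle to be conceptual rather than computational: recognizing that the C5 restriction blocks a direct translation of $\vee$-Elim and $\exists$-Elim, and that these steps must be routed through the unrestricted elimination theorems rather than simulated by the primitive (C5-restricted) rules. Once that is seen, the verification of the axioms is routine and the box-stripping in the converse is mechanical, so the theorem is in effect an assembly of Reduction, the unrestricted elimination theorems, and the two lemmas immediately preceding it.
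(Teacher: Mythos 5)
Your proposal is correct and follows essentially the same route as the paper: the right-to-left direction by induction on $\textsf{TJK}^{d+}$-derivations, deriving each axiom by a conditional proof that never violates C5 and routing $\vee$-Elim and $\exists$-Elim through the Unrestricted elimination theorems; the left-to-right direction by combining Reduction with the lemma giving $\vdash_{\textsf{TJK}^{d+}} \bigwedge \Gamma_0 \rightarrow \Box^n \phi$ and then detaching and stripping the boxes. The extra detail you supply on verifying the individual axioms is consistent with what the paper leaves implicit.
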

\begin{proof} $\impliedby$ By induction on the construction of proofs in the original axiomatization of $\textsf{TJK}^{d+}$, appealing to Unrestricted $\vee$-Elim and Unrestricted $\exists$-Elim in the induction steps for $\vee$-Elim and $\exists$-Elim respectively.

$\implies$ Suppose $\Gamma \vdash \phi$. Then $\Gamma_0 \vdash \phi$ for finite $\Gamma_0 \subseteq \Gamma$. By Reduction, $\Gamma_0 \vdash_{[-1]} \Box^n \phi$ for some $n$. So, by the previous lemma, $\vdash_{\textsf{TJK}^{d+}} \bigwedge \Gamma_0 \rightarrow \Box^n \phi$. Hence $\Gamma_0 \vdash_{\textsf{TJK}^{d+}} \phi$.\begin{footnote}{Thanks to Andrew Bacon for suggesting this argument.}\end{footnote}
\end{proof}

$\textsf{TJK}^+$, first studied by Bacon (2013a), is the logic obtained by removing $\exists$-Elim from the original axiomatization of $\textsf{TJK}^{d+}$. Write $\Gamma \vdash_{\textsf{TJK}^+} \phi$ iff there exists a proof of $\phi$ from $\Gamma$ in $\textsf{TJK}^+$. Since we did not rely on $\exists$-Elim in the proof of Lemma 8, we have effectively shown that for $|\Gamma| < \omega$, $\Gamma \vdash_{[-1]} \phi$ only if $\vdash_{\textsf{TJK}^+} \bigwedge \Gamma \rightarrow \phi$. This allows us to prove that $\textsf{TJK}^+ = \textsf{TJK}^{d+}$.

\begin{proposition} $\textsf{TJK}^+ = \textsf{TJK}^{d+}$.
\end{proposition}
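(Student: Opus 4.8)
The plan is to establish the two inclusions separately. One direction is immediate: since $\textsf{TJK}^+$ is obtained from the original axiomatization of $\textsf{TJK}^{d+}$ by merely \emph{deleting} the rule $\exists$-Elim, every $\textsf{TJK}^+$-derivation is already a $\textsf{TJK}^{d+}$-derivation, so $\Gamma \vdash_{\textsf{TJK}^+} \phi$ implies $\Gamma \vdash_{\textsf{TJK}^{d+}} \phi$ with no work. The whole content of the proposition is therefore the converse, namely that $\exists$-Elim is redundant in the axiomatic calculus: $\Gamma \vdash_{\textsf{TJK}^{d+}} \phi$ implies $\Gamma \vdash_{\textsf{TJK}^+} \phi$.

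For the converse I would simply replay the $\implies$ direction of Theorem 6, routing every step through $\textsf{TJK}^+$ in place of $\textsf{TJK}^{d+}$. Suppose $\Gamma \vdash_{\textsf{TJK}^{d+}} \phi$. By Theorem 6 this gives $\Gamma \vdash \phi$ in $\mathcal{N}\textsf{BQL}_\textsf{CD}^r$, and since any such proof rests on only finitely many assumptions, $\Gamma_0 \vdash \phi$ for some finite $\Gamma_0 \subseteq \Gamma$. The Reduction theorem then yields $\Gamma_0 \vdash_{[-1]} \Box^n \phi$ for some $n$. The crucial move is now to invoke the sharpened reading of Lemma 8 recorded in the paragraph preceding the proposition: because the inductive translation of $\mathcal{N}\textsf{BQL}_\textsf{CD}$-proofs into the axiomatic system simulates the $\exists$-Elim rule of $\mathcal{N}\textsf{BQL}_\textsf{CD}$ via the quantifier axiom $\forall v(\phi \rightarrow \psi) \rightarrow (\exists v \phi \rightarrow \psi)$ together with $\forall$-Int and the $\rightarrow$-axioms, rather than via the axiomatic rule $\exists$-Elim, the translation never needs $\exists$-Elim at all. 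Hence for finite $\Gamma_0$ we may conclude $\vdash_{\textsf{TJK}^+} \bigwedge \Gamma_0 \rightarrow \Box^n \phi$.

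From this single $\textsf{TJK}^+$-theorem the result follows using only resources present in $\textsf{TJK}^+$: derive $\bigwedge \Gamma_0$ from $\Gamma_0$ by $\wedge$-Int, detach $\Box^n \phi$ by $\rightarrow$-Elim, and then strip the $n$ boxes by $n$ rounds of $\top$-Int followed by $\rightarrow$-Elim, giving $\Gamma_0 \vdash_{\textsf{TJK}^+} \phi$ and so $\Gamma \vdash_{\textsf{TJK}^+} \phi$. I do not expect a genuine obstacle here, since all the machinery is already assembled; the one point demanding care is verifying the parenthetical claim that the proof of Lemma 8 is really $\exists$-Elim-free. This amounts to inspecting the inductive translation rule by rule and confirming that its $\exists$-Elim case discharges into the axiom above (using the eigenvariable constraint C3 to license the $\forall$-Int generalization) rather than into the axiomatic $\exists$-Elim rule. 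Granting that, the argument is exactly the $\textsf{TJK}^+$-analogue of the final paragraph of the proof of Theorem 6.
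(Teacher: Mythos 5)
Your proposal is correct and follows essentially the same route as the paper: pass to a finite subset, convert to $\mathcal{N}\textsf{BQL}_\textsf{CD}^r$ via Theorem 6, apply Reduction to get $\Gamma_0 \vdash_{[-1]} \Box^n\phi$, invoke the $\exists$-Elim-free reading of Lemma 8 to obtain $\vdash_{\textsf{TJK}^+} \bigwedge\Gamma_0 \rightarrow \Box^n\phi$, and then detach and strip the boxes inside $\textsf{TJK}^+$. The only (inessential) difference is the order in which you restrict to a finite subset and apply Theorem 6.
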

\begin{proof} Trivially, $\textsf{TJK}^+ \subseteq \textsf{TJK}^{d+}$. For the converse inclusion, we have
\begin{align*}
\Gamma \vdash_{\textsf{TJK}^{d+}} \phi &\implies \Gamma_0 \vdash_{\textsf{TJK}^{d+}} \phi \text{ for finite } \Gamma_0 \subseteq \Gamma \\ 
&\implies \Gamma_0 \vdash \phi \\
&\implies \Gamma_0 \vdash_{[-1]} \Box^n\phi \text{ for some } n \\
&\implies \vdash_{\textsf{TJK}^+} \textstyle\bigwedge \Gamma_0 \rightarrow \Box^n \phi \\
&\implies \Gamma_0 \vdash_{\textsf{TJK}^+} \phi.
\end{align*}
\end{proof}

\section{Soundness and completeness}

In this section, we show that $\mathcal{N}\textsf{BQL}_\textsf{CD}^r$ is sound and complete with respect to the binary Kripke semantics outlined in \S3. We write $\Gamma : \Sigma \models \phi$ iff for every $\mathcal{L}$-model $\mathfrak{M}$, every reflexive world $w \in \mathfrak{M}$ and every $u \succ w$: if $w \Vdash \Gamma$ and $u \Vdash \Sigma$ then $u \Vdash \phi$.

\begin{lemma}[Generalized Soundness] If $\Gamma : \Sigma \vdash \phi$ then $\Gamma : \Sigma \models \phi$.
\end{lemma}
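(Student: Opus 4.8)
The plan is to argue by induction on the construction of the proof-tree $\Pi$ witnessing $\Gamma : \Sigma \vdash \phi$, working directly with the definition of $\models$. So I fix an $\mathcal{L}$-model $\mathfrak{M}$, a reflexive world $w$ and a world $u \succ w$, I assume $w \Vdash \Gamma$ and $u \Vdash \Sigma$, and I aim to show $u \Vdash \phi$. The bookkeeping to carry through every case is that, by the definition of $\Gamma : \Sigma \vdash \phi$, every open assumption lies in $\Gamma \cup \Sigma$ while every \emph{unsafe} open assumption lies in $\Gamma$; and by C5 any assumption discharged by $\rightarrow$-Int, $\vee$-Elim or $\exists$-Elim is safe, so when I strip such a rule the freshly-opened assumption may be placed into $\Sigma$.

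For the base cases, a lone assumption in $\Sigma$ holds at $u$ by hypothesis, a lone assumption in $\Gamma$ holds at $w$ and transfers to $u$ by Persistence (Theorem 1, using $w \prec u$), and $\top$-Int is immediate. The rules that operate ``at a single point'' --- $\wedge$-Int, $\wedge$-Elim, $\vee$-Int, $\bot$-Elim, $\forall$-Elim, $\exists$-Int and CD --- are handled by unwinding the relevant satisfaction clause at $u$ after applying the induction hypothesis to the immediate subproofs with the same $w$, $u$, $\Gamma$, $\Sigma$ (CD is a purely single-world classical argument). The five internal rules are almost as routine: to verify, say, Internal Transitivity I take an arbitrary $v \succ u$ with $v \Vdash \phi$ and chase it through $u \Vdash \phi \rightarrow \psi$ and $u \Vdash \psi \rightarrow \chi$, both supplied by the induction hypothesis at $u$; here Internal $\forall$-Int and Internal $\exists$-Elim are exactly the places where the constant domain (all worlds sharing $M$) is needed. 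For $\vee$-Elim I apply the induction hypothesis to the major premise to get $u \Vdash \phi \vee \psi$ and feed whichever disjunct holds at $u$ into the corresponding minor subproof, which witnesses $\Gamma : \Sigma, \phi \vdash \chi$ (resp.\ $\Gamma : \Sigma, \psi \vdash \chi$).

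The two load-bearing cases are $\rightarrow$-Int and $\rightarrow$-Elim. For $\rightarrow$-Int, to establish $u \Vdash \phi \rightarrow \psi$ I pick an arbitrary $v \succ u$ with $v \Vdash \phi$; by transitivity of $\prec$ I have $v \succ w$, and by Persistence $v \Vdash \Sigma$, so the subproof --- which witnesses $\Gamma : \Sigma, \phi \vdash \psi$ --- yields $v \Vdash \psi$ via the induction hypothesis applied at the reflexive root $w$ and the world $v$. The crux is $\rightarrow$-Elim, and this is exactly where the restriction on unsafe assumptions does its work. In a modus ponens concluding $\psi$ from $\phi$ and $\phi \rightarrow \psi$, \emph{every} leaf of the right subproof sits in the right main subtree of that modus ponens, hence is unsafe, hence belongs to $\Gamma$; so the right subproof witnesses $\Gamma : \emptyset \vdash \phi \rightarrow \psi$. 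I therefore apply the induction hypothesis to it \emph{at the reflexive world $w$ itself} --- legitimately, since $w \succ w$ --- to obtain $w \Vdash \phi \rightarrow \psi$. The left subproof witnesses $\Gamma : \Sigma \vdash \phi$, giving $u \Vdash \phi$; and since $w \prec u$ and $w \Vdash \phi \rightarrow \psi$, the satisfaction clause for $\rightarrow$ delivers $u \Vdash \psi$. I expect this to be the main obstacle: the point is to recognize that pushing the right subproof's assumptions into $\Gamma$ (so that they are tested at the reflexive root) is precisely what lets one fire the conditional at $w$ and then apply it to the successor $u$, which is why reflexivity of the root is exactly the ingredient restoring modus ponens.

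Finally, $\forall$-Int and $\exists$-Elim are handled by the standard eigenvariable reinterpretation. The eigenvariable constraints C2/C3 guarantee that $a_i$ occurs neither in $\Gamma \cup \Sigma$ nor in the principal formulas, so for each $b \in M$ I may pass to the model $\mathfrak{M}'$ agreeing with $\mathfrak{M}$ except that $|a_i| = b$; this leaves $w \Vdash \Gamma$ and $u \Vdash \Sigma$ intact while letting the witness range over all of $M$. For $\forall$-Int this yields $u \Vdash \phi(b)$ for every $b$, hence $u \Vdash \forall v \phi$; for $\exists$-Elim I first extract from the induction hypothesis on the major premise a witness $b$ with $u \Vdash \phi(b)$, then run the minor subproof in the model with $|a_i| = b$. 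In writing this up I would give the $\rightarrow$-Elim, $\rightarrow$-Int and one eigenvariable case in full and leave the remaining cases as routine unwindings of the satisfaction clauses.
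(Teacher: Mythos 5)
Your proposal is correct and follows essentially the same route as the paper: induction on proof-trees, with the reflexivity of $w$ used to evaluate the right main subproof of $\rightarrow$-Elim (which witnesses $\Gamma : \emptyset \vdash \phi \rightarrow \psi$ since all its open assumptions are unsafe) at $w$ itself, and transitivity plus Persistence handling $\rightarrow$-Int. The eigenvariable reinterpretation for $\exists$-Elim also matches the paper's treatment.
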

\begin{proof} By induction on the construction of proofs in $\mathcal{N}\textsf{BQL}_\textsf{CD}^r$. The base case is easy. The induction steps are also easy except for $\rightarrow$-Elim and the rules discharging assumptions.

\underline{$\rightarrow$-Elim} Suppose we have a proof of the form 
$$
\infer
	{\psi}
	{
	\infer*
		{\phi}
		{\vspace{1mm} \Gamma : \Sigma}
	&
	\infer*
		{\phi \rightarrow \psi}
		{\vspace{1mm} \Gamma : \emptyset}
	}
$$ 
in $\mathcal{N}\textsf{BQL}_\textsf{CD}^r$. By the induction hypothesis, $\Gamma : \Sigma \models \phi$ and $\Gamma : \emptyset \models \phi \rightarrow \psi$. Let $w \prec u$ for reflexive $w$. Suppose $w \Vdash \Gamma$ and $u \Vdash \Sigma$. Then $w \Vdash \phi \rightarrow \psi$ and $u \Vdash \phi$. So $u \Vdash \psi$.

\underline{$\rightarrow$-Int} Suppose we have a proof of the form
$$
\infer
	{\phi \rightarrow \psi}
	{
	\infer*
		{\psi}
		{\Gamma : \Sigma, [\phi]}
	}
$$
in $\mathcal{N}\textsf{BQL}_\textsf{CD}^r$. By the induction hypothesis, $\Gamma : \Sigma, \phi \models \psi$. Let $w \prec u$ for reflexive $w$. Suppose $w \Vdash \Gamma$ and $u \Vdash \Sigma$. Let $z \Vdash \phi$ for $z \succ u$. Then, by Persistence, $z \Vdash \Sigma$. Also, by transitivity, $w \prec z$. So $z \Vdash \psi$. Hence $u \Vdash \phi \rightarrow \psi$. 

\underline{$\vee$-Elim} Suppose we have a proof of the form
$$
\infer
	{\chi}
	{
	\infer*
		{\phi \vee \psi}
		{\vspace{1mm} \Gamma : \Sigma}
	&
	\infer*
		{\chi}
		{\Gamma : \Sigma, [\phi]}
	&
	\infer*
		{\chi}
		{\Gamma : \Sigma, [\psi]}
	}
$$
in $\mathcal{N}\textsf{BQL}_\textsf{CD}^r$. By the induction hypothesis, $\Gamma : \Sigma \models \phi \vee \psi$, $\Gamma : \Sigma, \phi \models \chi$ and $\Gamma : \Sigma, \psi \models \chi$. Let $w \prec u$ for reflexive $w$. Suppose $w \Vdash \Gamma$ and $u \Vdash \Sigma$. Then $u \Vdash \phi \vee \psi$. So $u \Vdash \phi$ or $u \Vdash \psi$. In either case, $u \Vdash \chi$.

\underline{$\exists$-Elim} Suppose we have a proof of the form
$$
\infer
	{\psi}
	{
	\infer*
		{\exists v \phi}
		{\vspace{1mm} \Gamma : \Sigma}
	&
	\infer*
		{\psi}
		{\Gamma^* : \Sigma^*, [\phi(a_i)]}
	}
$$
in $\mathcal{N}\textsf{BQL}_\textsf{CD}^r$, where $\Gamma^* \subseteq \Gamma$, $\Sigma^* \subseteq \Sigma$ and $a_i$ does not occur in $\Gamma^* \cup \Sigma^* \cup \{\phi, \psi\}$. By the induction hypothesis, $\Gamma : \Sigma \models \exists v \phi$ and $\Gamma^* : \Sigma^*, \phi(a_i) \models \psi$. Let $w \prec u$ in $\mathfrak{M}$ for reflexive $w$. Suppose $w \Vdash \Gamma$ and $u \Vdash \Sigma$. Then $u \Vdash \exists v \phi$. So $u \Vdash \phi(b)$ for some $b \in dom(\mathfrak{M})$. Let $\mathfrak{M}[a_i:b]$ denote the $\mathcal{L}^+$-model obtained from $\mathfrak{M}$ by setting $|a_i| = b$. Then $\mathfrak{M}[a_i:b], w \Vdash \Gamma^*$ and $\mathfrak{M}[a_i:b], u \Vdash \Sigma^* \cup \{\phi(a_i)\}$. So $\mathfrak{M}[a_i:b], u \Vdash \psi$. Hence $\mathfrak{M}, u \Vdash \psi$.
\end{proof}

\begin{corollary}[Soundness] If $\Gamma \vdash \phi$ then $\Gamma \models \phi$.
\end{corollary}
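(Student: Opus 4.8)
The plan is to derive this directly from the Generalized Soundness lemma by specializing $\Sigma$ to $\emptyset$ and then exploiting the reflexivity of the root world. The two ingredients I need to connect are the bare turnstile $\Gamma \vdash \phi$ with the relativized $\Gamma : \Sigma \vdash \phi$, and the bare semantic relation $\Gamma \models \phi$ with the relativized $\Gamma : \Sigma \models \phi$.

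First I would observe that $\Gamma \vdash \phi$ implies $\Gamma : \emptyset \vdash \phi$. Indeed, a proof of $\phi$ from $\Gamma$ in $\mathcal{N}\textsf{BQL}_\textsf{CD}^r$ has all of its open assumptions in $\Gamma$; in particular, every open assumption occurring \emph{unsafely} lies in $\Gamma$, which is exactly the side condition required for $\Gamma : \emptyset \vdash \phi$ (here $\Gamma \cup \emptyset = \Gamma$). Applying the Generalized Soundness lemma to this proof then yields $\Gamma : \emptyset \models \phi$.

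The one genuine step is to read off $\Gamma \models \phi$ from $\Gamma : \emptyset \models \phi$. Unwinding the definitions, $\Gamma : \emptyset \models \phi$ says that for every model $\mathfrak{M}$, every reflexive world $w$ and every $u \succ w$, if $w \Vdash \Gamma$ then $u \Vdash \phi$ (the clause $u \Vdash \emptyset$ being vacuous). To obtain $\Gamma \models \phi$, fix a reflexive world $w$ with $w \Vdash \Gamma$. Since $w$ is reflexive we have $w \prec w$, so we may instantiate the universally quantified $u$ with $w$ itself; this gives $w \Vdash \phi$, which is precisely what $\Gamma \models \phi$ demands.

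There is no real obstacle here: the corollary is a bookkeeping consequence of the lemma. The only point worth flagging is the role of reflexivity --- it is exactly what licenses collapsing the ``for all $u \succ w$'' quantifier in the relativized semantics down to an evaluation at the root $w$, so that the $\Sigma$-free semantic consequence relation falls out as the restriction of the relativized one to reflexive worlds with $\Sigma = \emptyset$.
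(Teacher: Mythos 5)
Your proposal is correct and follows exactly the paper's own argument: pass from $\Gamma \vdash \phi$ to $\Gamma : \emptyset \vdash \phi$, apply Generalized Soundness to obtain $\Gamma : \emptyset \models \phi$, and then instantiate $u$ as $w$ using reflexivity to recover $\Gamma \models \phi$. You have merely spelled out the bookkeeping that the paper leaves implicit; there is nothing to add or correct.
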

\begin{proof} Suppose $\Gamma \vdash \phi$. Then $\Gamma : \emptyset \vdash \phi$. So, by generalized soundness, $\Gamma : \emptyset \models \phi$. But then $\Gamma \models \phi$.
\end{proof}

\subsection{The canonical model}

The canonical model for $\textsf{BQL}_\textsf{CD}^r$ is identical to the canonical model for $\textsf{BQL}_\textsf{CD}$ (defined in Middleton (2020)). I repeat the definition here for ease of reference.  In order to prove that the canonical model behaves correctly, we need to temporarily assume that $\mathcal{L}$ is countable. However, this does not result in a loss of generality, since we can use compactness to leverage up our proof of completeness to $\mathcal{L}$ of arbitrary cardinality. A set of sentences $\Gamma \subseteq \mathcal{L}^+$ is called a prime saturated $\textsf{BQL}_\textsf{CD}$-theory iff $\Gamma$ satisfies the following properties:
\begin{align*}
&(\text{consistency}) &&\bot \not \in \Gamma \\
&(\textsf{BQL}_\textsf{CD}\text{-closure}) &&\text{if } \Gamma \vdash_{[-1]} \phi \text{ then } \phi \in \Gamma \\
&(\text{disjunction property}) &&\text{if } \phi \vee \psi \in \Gamma \text{ then } \phi \in \Gamma \text{ or } \psi \in \Gamma \\
&(\text{existence property}) &&\text{if } \exists v \phi \in \Gamma \text{ then } \phi(t) \in \Gamma \text{ for some } t \in \mathcal{L}^+ \\
&(\text{totality property}) &&\text{if } \phi(t) \in \Gamma \text{ for every } t \in \mathcal{L}^+ \text{ then } \forall v \phi \in \Gamma.
\end{align*}
Let $Sat(\textsf{BQL}_\textsf{CD})$ denote the set of prime saturated $\textsf{BQL}_\textsf{CD}$-theories. The canonical model $\mathfrak{C}$ is the $\mathcal{L}^+$-model $\langle Sat(\textsf{BQL}_\textsf{CD}), \prec, C, |\mathord{\cdot}| \rangle$ such that (i) $\Gamma \prec \Sigma$ iff for all $\phi, \psi$: if $\phi \rightarrow \psi \in \Gamma$ and $\phi \in \Sigma$ then $\psi \in \Sigma$, (ii) $C$ is the set of closed $\mathcal{L}^+$-terms and (iii) we have:
\begin{align*}
|c| &= c \\
|f^n|(t_1,...,t_n) &= f^n(t_1,...,t_n) \\
|R^n|(\Gamma) &= \{\langle t_1,...,t_n \rangle: R^n(t_1,...,t_n) \in \Gamma\}.
\end{align*}

\begin{proposition} $\mathfrak{C}$ is an $\mathcal{L}^+$-model.
\end{proposition}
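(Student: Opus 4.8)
The plan is to run through the clauses in the definition of an $\mathcal{L}^+$-model one at a time. Two of them are immediate from the definition of $\mathfrak{C}$: the domain $C$ of closed $\mathcal{L}^+$-terms is non-empty (for instance $0$, and each $a_i$, is a closed term), and the interpretation function has the required type, since $|c| = c$ and $|f^n|(t_1,\dots,t_n) = f^n(t_1,\dots,t_n)$ are again closed terms while $|R^n|(\Gamma) \subseteq C^n$ by construction. This leaves three substantive clauses: that the set of worlds $Sat(\textsf{BQL}_\textsf{CD})$ is non-empty, that $\prec$ is transitive, and that the persistence constraint holds.

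The key observation driving the two frame conditions is that $\prec$ is inflationary: \emph{if $\Gamma \prec \Sigma$ then $\Gamma \subseteq \Sigma$}. To see this, fix $\chi \in \Gamma$. A single application of $\rightarrow$-Int discharging zero copies of $\top$ shows $\chi \vdash_{[-1]} \Box\chi$, so by $\textsf{BQL}_\textsf{CD}$-closure $\Box\chi = \top \rightarrow \chi \in \Gamma$; and since $\vdash_{[-1]} \top$ by $\top$-Int, closure gives $\top \in \Sigma$ as well. Instantiating the defining clause of $\Gamma \prec \Sigma$ with the pair $\top, \chi$ then yields $\chi \in \Sigma$.

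Both remaining frame conditions now fall out. For persistence, suppose $\langle t_1,\dots,t_n\rangle \in |R^n|(\Gamma)$, i.e.\ $R^n(t_1,\dots,t_n) \in \Gamma$, and $\Gamma \prec \Sigma$; by the observation $R^n(t_1,\dots,t_n) \in \Sigma$, so $\langle t_1,\dots,t_n\rangle \in |R^n|(\Sigma)$, giving $|R^n|(\Gamma) \subseteq |R^n|(\Sigma)$. For transitivity, suppose $\Gamma \prec \Sigma$ and $\Sigma \prec \Delta$, and take any $\phi \rightarrow \psi \in \Gamma$ with $\phi \in \Delta$; since $\Gamma \subseteq \Sigma$ we have $\phi \rightarrow \psi \in \Sigma$, so applying $\Sigma \prec \Delta$ to $\phi \rightarrow \psi$ and $\phi \in \Delta$ delivers $\psi \in \Delta$. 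Hence $\Gamma \prec \Delta$.

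The genuinely laborious clause, and the one I expect to be the main obstacle, is the non-emptiness of $Sat(\textsf{BQL}_\textsf{CD})$. This rests on a saturation lemma --- every $\textsf{BQL}_\textsf{CD}$-consistent theory extends to a prime saturated one --- which I would prove by the usual Lindenbaum-style construction, using the assumed countability of $\mathcal{L}$ to enumerate the formulas and build an increasing chain that at each stage adds a disjunct (to force the disjunction property) or a fresh witness $a_i$ (to force the existence property) while preserving consistency and $\textsf{BQL}_\textsf{CD}$-closure, with the totality property read off at the limit. Applying this to the deductive closure of $\{\top\}$ --- which is consistent because $\not\vdash_{[-1]} \bot$ by Soundness, since $\bot$ holds at no world --- produces at least one world. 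The delicate part of that argument is juggling all five closure conditions at once without collapsing into inconsistency, in particular keeping a reservoir of fresh constants $a_i$ available for witnessing existentials; but the construction is entirely standard and is identical to the one carried out for $\textsf{BQL}_\textsf{CD}$ in Middleton (2020).
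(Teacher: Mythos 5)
Your proof is correct, and on the frame conditions it is essentially the paper's own argument in lightly repackaged form: the paper verifies transitivity and persistence by noting that $\chi \in \Gamma$ forces $\top \rightarrow \chi \in \Gamma$ and hence $\chi \in \Sigma$ whenever $\Gamma \prec \Sigma$ --- exactly your ``inflationary'' lemma, applied once to $\phi \rightarrow \psi$ and once to $R^n(t_1,\dots,t_n)$ rather than stated in general. Where you genuinely diverge is on the clause you flag as the main obstacle, non-emptiness of $Sat(\textsf{BQL}_\textsf{CD})$. The paper dispatches this semantically in one line: since $\vdash_{[-1]}$ is truth-preserving at \emph{every} world of an $\mathcal{L}^+$-model (not just reflexive ones), the set of sentences true at any world of any model in which every domain element is named is automatically consistent, $\textsf{BQL}_\textsf{CD}$-closed, prime, and has the existence and totality properties; so a single trivial model witnesses $Sat(\textsf{BQL}_\textsf{CD}) \neq \emptyset$. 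Your syntactic Lindenbaum route also works, but it is considerably heavier, and your sketch is thinnest exactly where the real work lies: the totality property is not obtained by adding witnesses for existentials but by maintaining a \emph{reject} set $\Delta_n$ alongside the accept set $\Sigma_n$ and, whenever $\forall v\chi$ is rejected, rejecting some instance $\chi(t)$ as well --- which requires the CD rule to show a suitable $t$ exists (this is Case 1 of the paper's Relative Extension lemma, which cannot itself be invoked here since it presupposes a theory $\Gamma \in Sat(\textsf{BQL}_\textsf{CD})$). So your approach buys uniformity with the extension lemmas used later, at the cost of redoing that pair-construction from scratch; the paper's buys a one-sentence proof at the cost of a small semantic detour.
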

\begin{proof} The set of sentences true at an arbitrary world in an $\mathcal{L}^+$-model is closed under $\textsf{BQL}_\textsf{CD}$. Thus, a world in an $\mathcal{L}^+$-model in which every element of the domain is named gives $Sat(\textsf{BQL}_\textsf{CD}) \neq \emptyset$. For transitivity, suppose $\Gamma \prec \Sigma \prec \Delta$, $\phi \rightarrow \psi \in \Gamma$ and $\phi \in \Delta$. Then $\top \rightarrow (\phi \rightarrow \psi) \in \Gamma$ and hence $\phi \rightarrow \psi \in \Sigma$. So $\psi \in \Delta$. For the persistence constraint, suppose $\Gamma \prec \Sigma$ and $\langle t_1,...,t_n \rangle \in |R^n|(\Gamma)$. Then $R^n(t_1,...,t_n) \in \Gamma$. So $\top \rightarrow R^n(t_1,...,t_n) \in \Gamma$ and hence $R^n(t_1,...,t_n) \in \Sigma$. So $\langle t_1,...,t_n \rangle \in |R^n|(\Sigma)$.
\end{proof}

\noindent The next lemma supercedes Lemma 7 in Middleton (2020), which unnecessarily appealed to Relative Deduction.\begin{footnote}{Furthermore, Relative Deduction is formulated incorrectly in Middleton (2020). To get the correct formulation, replace $\Sigma', \Gamma \vdash_{\Sigma} \phi$ for $\Sigma' \subseteq \Sigma$ with $\Sigma : \Gamma \vdash_{[0]} \phi$.}\end{footnote}

\begin{lemma}[Relative Extension] For $\Gamma \in Sat(\textsf{BQL}_\textsf{CD})$: if $\Gamma \not \vdash_{[-1]} \phi \rightarrow \psi$ then there exists $\Sigma \in Sat(\textsf{BQL}_\textsf{CD})$ such that $\Gamma \prec \Sigma$, $\phi \in \Sigma$ and $\psi \not \in \Sigma$.
\end{lemma}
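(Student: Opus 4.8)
The plan is to build $\Sigma$ as a prime saturated extension of $\{\phi\}$ inside the \emph{$\Gamma$-relative} closure operation, where the operative notion is exactly the level-$0$ relative relation $\Gamma : \Sigma \vdash_{[0]} \cdot$: a set is $\Gamma$-relatively closed iff it is closed under $\vdash_{[-1]}$ and, in addition, satisfies the accessibility clause that $\alpha \in \Sigma$ together with $\alpha \rightarrow \beta \in \Gamma$ forces $\beta \in \Sigma$. A set with these closure properties that is also consistent, prime and saturated is precisely a $\Gamma$-accessible world of $\mathfrak{C}$, since $\vdash_{[0]}$-closure implies $\vdash_{[-1]}$-closure and the accessibility clause is, verbatim, the definition of $\Gamma \prec \Sigma$. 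It is essential to work at level $0$ and not with the full relative relation $\Gamma : \Sigma \vdash \cdot$: because $\Sigma$ need not be reflexive, the conditional premise of a relative modus ponens must be a genuine member of $\Gamma$, never one merely derived from $\Gamma$ by further modus ponens. Allowing the latter is exactly what would introduce a residual box, yielding only $\Box^{n}(\phi \rightarrow \psi) \in \Gamma$ in place of $\phi \rightarrow \psi \in \Gamma$, which is too weak to contradict the hypothesis.

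The technical heart is a characterization that replaces the appeal to Relative Deduction criticized in the footnote: for finite $\Theta$, I claim $\Gamma : \Theta \vdash_{[0]} \chi$ iff $\bigwedge \Theta \rightarrow \chi \in \Gamma$. Right-to-left is immediate, since $\bigwedge \Theta$ is obtained from $\Theta$ by $\wedge$-Int and then one relative modus ponens against the $\Gamma$-member $\bigwedge \Theta \rightarrow \chi$ gives $\chi$. For left-to-right I would induct on the relative derivation, maintaining the invariant $\bigwedge \Theta \rightarrow \chi \in \Gamma$: the $\vdash_{[-1]}$-steps are absorbed by a one-step, arbitrary-antecedent analogue of Regularity (if $\chi_1,\dots,\chi_k \vdash_{[-1]} \chi$ then $\delta \rightarrow \chi_1,\dots,\delta \rightarrow \chi_k \vdash_{[-1]} \delta \rightarrow \chi$, via Internal $\wedge$-Int and Internal Transitivity together with the theorem $\bigwedge_i \chi_i \rightarrow \chi$), while each relative modus ponens step, discharging $\alpha \in \Sigma$ against $\alpha \rightarrow \beta \in \Gamma$, is absorbed by Internal Transitivity. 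Taking $\Theta = \{\phi\}$ and using $\textsf{BQL}_\textsf{CD}$-closure of $\Gamma$, the hypothesis $\Gamma \not\vdash_{[-1]} \phi \rightarrow \psi$ (i.e.\ $\phi \rightarrow \psi \notin \Gamma$) yields $\Gamma : \{\phi\} \not\vdash_{[0]} \psi$, so $\{\phi\}$ is $\Gamma$-relatively consistent with $\psi$ excluded.

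I would then run the standard Lindenbaum--Henkin saturation, working throughout with $\Gamma : \cdot \vdash_{[0]} \cdot$ and preserving at every stage the invariant that the set built so far does not $\Gamma$-relatively prove $\psi$. Enumerating all $\mathcal{L}^+$-sentences (this is where countability of $\mathcal{L}$ is used), I decide each sentence so as to maintain relative consistency: the disjunction property is secured by a reasoning-by-cases property of $\vdash_{[0]}$, which follows from the characterization together with the Distribution lemma exactly as in the $\vee$-Elim case of Relative Deduction; the existence and totality properties are secured using Infinite Distribution, Internal $\exists$-Elim and Internal $\forall$-Int, introducing fresh constants $a_i$ as witnesses and renaming eigenvariables as needed. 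The limit $\Sigma$ is then $\vdash_{[-1]}$-closed, prime and saturated; it is consistent because $\bot \vdash_{[-1]} \psi$, so $\psi \notin \Sigma$ forces $\bot \notin \Sigma$; it contains $\phi$ and excludes $\psi$; and, being closed under $\Gamma$-relative modus ponens, it satisfies $\Gamma \prec \Sigma$ by definition of $\prec$.

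The main obstacle is the correct isolation of the relative consequence relation and its characterization in the first two steps: one must pin the derivability at \emph{level $0$}, so that genuine $\Gamma$-members serve as the conditional premises of modus ponens and the deduction-style collapse to $\phi \rightarrow \chi \in \Gamma$ goes through with no leftover $\Box$, and one must confirm that this weakened relation still supports case analysis and existential instantiation robustly enough to drive the saturation. The quantifier clauses — simultaneously guaranteeing the existence and totality properties in the constant-domain setting — are the most delicate bookkeeping, but they follow the template already exhibited in the $\exists$-Elim case of the Relative Deduction proof.
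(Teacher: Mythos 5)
Your first two steps are sound: the characterization $\Gamma : \Theta \vdash_{[0]} \chi$ iff $\bigwedge\Theta\rightarrow\chi\in\Gamma$ (for $\Gamma\in Sat(\textsf{BQL}_\textsf{CD})$ and finite $\Theta$) does hold --- left-to-right is Relative Deduction at $n=0$ together with $\textsf{BQL}_\textsf{CD}$-closure, and right-to-left is the one-step modus ponens you describe --- and it correctly converts the hypothesis into $\Gamma : \{\phi\}\not\vdash_{[0]}\psi$. Note, though, that the paper deliberately bypasses this layer and works directly with the condition $\Gamma\vdash_{[-1]}\bigwedge\Sigma_n\rightarrow\bigvee\Delta_n$; that is precisely what the footnote means by saying the new lemma no longer appeals to Relative Deduction, so your detour, while harmless, is not the simplification you present it as.

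The genuine gap is in the saturation step. Your invariant tracks a single excluded sentence: ``the set built so far does not $\Gamma$-relatively prove $\psi$.'' That suffices for consistency, for primeness (if both disjuncts of some $\alpha\vee\beta\in\Sigma$ were rejected, Distribution and Internal $\vee$-Elim would show $\alpha\vee\beta$ is rejected too), and for the existence property via Henkin witnesses, but it cannot deliver the \emph{totality} property, which the Truth Lemma needs for the $\forall$ clause. When you are forced to reject $\forall v\chi$ (because $\bigwedge\Sigma_n\wedge\forall v\chi\rightarrow\psi\in\Gamma$), nothing in your invariant prevents every single instance $\chi(t)$ from being consistently added at its own later stage; the limit $\Sigma$ would then contain all instances but not $\forall v\chi$, so $\Sigma\notin Sat(\textsf{BQL}_\textsf{CD})$. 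To rule this out you must be able to \emph{record} that some particular instance is excluded, and that forces a two-sided construction: the paper builds pairs $(\Sigma_n,\Delta_n)$ with invariant $\Gamma\not\vdash_{[-1]}\bigwedge\Sigma_n\rightarrow\bigvee\Delta_n$, and when $\forall v\chi$ goes into $\Delta_{n+1}$ it uses the totality property of $\Gamma$ itself together with Internal $\forall$-Int and CD to find a $t$ such that $\chi(t)$ can be placed into $\Delta_{n+1}$ as well. Your sketch names Internal $\forall$-Int in connection with totality but supplies no mechanism for keeping an instance out; without the $\Delta$-side (or an equivalent growing disjunction of excluded sentences in the invariant) the construction does not go through.
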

\begin{proof} Suppose $\Gamma \not \vdash_{[-1]} \phi \rightarrow \psi$ and fix an enumeration $\{\alpha_i\}_{i \in \omega}$ of $\mathcal{L}^+$-sentences (recall $\mathcal{L}$ is now assumed to be countable). We first inductively define an increasing sequence $\{(\Sigma_n, \Delta_n) : n \in \omega\}$ of pairs of finite sets of $\mathcal{L}^+$-sentences $(\Sigma_n, \Delta_n)$ such that $\Gamma \not \vdash_{[-1]} \bigwedge \Sigma_n \rightarrow \bigvee \Delta_n$. For the base case, define $\Sigma_0 = \{\phi\}$ and $\Delta_0 = \{\psi\}$. For the induction step, suppose $\Sigma_n$, $\Delta_n$ have already been defined and $\Gamma \not \vdash_{[-1]} \bigwedge \Sigma_n \rightarrow \bigvee \Delta_n$. There are two cases to consider.

\underline{Case 1} $\Gamma \vdash_{[-1]} \bigwedge \Sigma_n \wedge \alpha_n \rightarrow \bigvee \Delta_n$. Then we define $\Sigma_{n + 1} = \Sigma_n$. First suppose $\alpha_n \neq \forall v \chi$. Then we define $\Delta_{n + 1} = \Delta_n \cup \{\alpha_n\}$. Suppose for a reductio that $\Gamma \vdash_{[-1]} \bigwedge \Sigma_n \rightarrow \bigvee \Delta_n \vee \alpha_n$. Then, by Internal $\wedge$-Int and Distribution, $\Gamma \vdash_{[-1]} \bigwedge \Sigma_n \rightarrow (\bigwedge \Sigma_n \wedge \bigvee \Delta_n) \vee (\bigwedge \Sigma_n \wedge \alpha_n)$. So, by Internal $\vee$-Elim, $\Gamma \vdash_{[-1]} \bigwedge \Sigma_n \rightarrow \bigvee \Delta_n$, which contradicts the induction hypothesis.

Suppose, on the other hand, that $\alpha_n = \forall v \chi$. By the same argument, $\Gamma \not \vdash_{[-1]} \bigwedge \Sigma_n \rightarrow \bigvee \Delta_n \vee \forall v \chi$. Suppose for a reductio that $\Gamma \vdash_{[-1]} \bigwedge \Sigma_n \rightarrow \bigvee \Delta_n \vee \chi(t)$ for every $t \in \mathcal{L}^+$. Then, since $\Gamma \in Sat(\textsf{BQL}_\textsf{CD})$, $\Gamma \vdash_{[-1]} \forall v(\bigwedge \Sigma_n \rightarrow \bigvee \Delta_n \vee \chi)$. But then, by Internal $\forall$-Int and CD, $\Gamma \vdash_{[-1]} \bigwedge \Sigma_n \rightarrow \bigvee \Delta_n \vee \forall v \chi$, which is a contradiction. So we can define $\Delta_{n+1} = \Delta_n \cup \{\forall v \chi, \chi(t)\}$ for some $t \in \mathcal{L}^+$ such that $\Gamma \not \vdash_{[-1]} \bigwedge \Sigma_n \rightarrow \bigvee \Delta_n \vee \forall v \chi \vee \chi(t)$.

\underline{Case 2} $\Gamma \not \vdash_{[-1]} \bigwedge \Sigma_n \wedge \alpha_n \rightarrow \bigvee \Delta_n$. Then we define $\Delta_{n + 1} = \Delta_n$. If $\alpha_n \neq \exists v \chi$ then we define $\Sigma_{n +1} = \Sigma_n \cup \{\alpha_n\}$. Suppose, on the other hand, that $\alpha_n = \exists v \chi$. Suppose for a reductio that $\Gamma \vdash_{[-1]} \bigwedge \Sigma_n \wedge \chi(t) \rightarrow \bigvee \Delta_n$ for every $t \in \mathcal{L}^+$. Then, since $\Gamma \in Sat(\textsf{BQL}_\textsf{CD})$, $\Gamma \vdash_{[-1]} \forall v(\bigwedge \Sigma_n \wedge \chi \rightarrow \bigvee \Delta_n)$. So, by Internal $\exists$-Elim and Infinite Distribution, $\Gamma \vdash_{[-1]} \bigwedge \Sigma_n \wedge \exists v \chi \rightarrow \bigvee \Delta_n$, which is a contradiction. So we can define $\Sigma_{n+1} = \Sigma_n \cup \{\exists v \chi, \chi(t)\}$ for some $t \in \mathcal{L}^+$ such that $\Gamma \not \vdash_{[-1]} \bigwedge \Sigma_n \wedge \exists v \chi \wedge \chi(t) \rightarrow \bigvee \Delta_n$.

Now define $\Sigma = \bigcup_{n \in \omega} \Sigma_n$. Clearly, $\phi \in \Sigma$. Furthermore, $\psi \not \in \Sigma$, for otherwise $\psi \in \Sigma_n$ for some $n$ and so $\Gamma \vdash_{[-1]} \bigwedge \Sigma_n \rightarrow \bigvee \Delta_n$. It is straightforward to verify $\Sigma \in Sat(\textsf{BQL}_\textsf{CD})$ using similar arguments. To verify $\Gamma \prec \Sigma$, suppose $\alpha_n \rightarrow \alpha_m \in \Gamma$ and $\alpha_n \in \Sigma$.  Then $\alpha_n \in \Sigma_k$ for some $k$ and so $\Gamma \vdash_{[-1]} \bigwedge \Sigma_k \rightarrow \alpha_m$. Suppose for a reductio that $\alpha_m \not \in \Sigma$. Then $\alpha_m \in \Delta_{m + 1}$. So $\Gamma \vdash_{[-1]} \bigwedge \Sigma_{\max\{k, m + 1\}} \rightarrow \bigvee \Delta_{\max\{k, m + 1\}}$, which is a contradiction.
\end{proof}

\begin{lemma}[Truth] $\mathfrak{C}, \Gamma \Vdash \phi$ iff $\phi \in \Gamma$.
\end{lemma}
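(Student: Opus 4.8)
The plan is to prove the Truth Lemma by induction on the construction of $\phi$, establishing both directions ($\mathfrak{C}, \Gamma \Vdash \phi$ iff $\phi \in \Gamma$) simultaneously. The atomic cases ($\top$, $\bot$, $R^n$) are immediate from the definition of the canonical model: for instance $\Gamma \Vdash R^n(t_1,\dots,t_n)$ holds iff $\langle t_1,\dots,t_n \rangle \in |R^n|(\Gamma)$, which by definition of $|R^n|$ means $R^n(t_1,\dots,t_n) \in \Gamma$. The cases for $\wedge$ and $\vee$ follow easily from the induction hypothesis together with $\textsf{BQL}_\textsf{CD}$-closure (for the $\wedge$ case) and the disjunction property (for the right-to-left direction of $\vee$; the left-to-right direction uses closure under $\vee$-Int). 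For the quantifier cases, the existence property handles $\exists$ and the totality property handles $\forall$: since every element of the canonical domain $C$ is a closed term naming itself, I can pass freely between the semantic clause (ranging over $b \in C$) and the syntactic clause (ranging over terms $t \in \mathcal{L}^+$), applying the induction hypothesis to each instance $\phi(t)$.

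The crux of the argument is the $\rightarrow$ case, and this is where Relative Extension (Lemma 11) does the essential work. For the left-to-right direction, suppose $\phi \rightarrow \psi \notin \Gamma$. By $\textsf{BQL}_\textsf{CD}$-closure this means $\Gamma \not\vdash_{[-1]} \phi \rightarrow \psi$, so Relative Extension yields some $\Sigma \in Sat(\textsf{BQL}_\textsf{CD})$ with $\Gamma \prec \Sigma$, $\phi \in \Sigma$ and $\psi \notin \Sigma$. By the induction hypothesis $\Sigma \Vdash \phi$ but $\Sigma \not\Vdash \psi$, so $\Gamma$ has an accessible world witnessing the failure of the conditional, whence $\Gamma \not\Vdash \phi \rightarrow \psi$. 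For the right-to-left direction, suppose $\phi \rightarrow \psi \in \Gamma$ and let $\Sigma \succ \Gamma$ with $\Sigma \Vdash \phi$; I must show $\Sigma \Vdash \psi$. By the induction hypothesis $\phi \in \Sigma$, and since $\Gamma \prec \Sigma$ the defining condition of $\prec$ gives $\psi \in \Sigma$, so the induction hypothesis delivers $\Sigma \Vdash \psi$.

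I expect the quantifier cases to require a small amount of care, since the semantic clauses range over domain elements $b \in C$ while the existence and totality properties are phrased in terms of terms $t \in \mathcal{L}^+$; the identification of $C$ with the set of closed $\mathcal{L}^+$-terms (each denoting itself under $|\cdot|$) is what makes this transition seamless, and I would note that $|t|$ applied to a closed term returns the term itself. The genuine obstacle, however, is the $\rightarrow$ case, and specifically the reliance on the precise formulation of Relative Extension: the lemma is stated with $\vdash_{[-1]}$ rather than the full consequence relation $\vdash$, which is exactly what is needed here because membership in a prime saturated $\textsf{BQL}_\textsf{CD}$-theory is governed by $\textsf{BQL}_\textsf{CD}$-closure (i.e.\ by $\vdash_{[-1]}$, not by $\vdash$). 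Once the $\rightarrow$ case is secured via Relative Extension, the remaining inductive steps are routine verifications against the satisfaction clauses and the saturation properties.
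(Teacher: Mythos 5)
Your proposal is correct and follows exactly the route the paper takes (the paper's own proof is just the one-line remark ``by induction on the complexity of $\mathcal{L}^+$-sentences, appealing to Relative Extension in the induction step for $\rightarrow$''), with the atomic, connective, and quantifier cases handled by the corresponding saturation properties and the conditional case split into Relative Extension for one direction and the definition of $\prec$ for the other. Your filled-in details, including the observation that every closed term denotes itself in $\mathfrak{C}$ so the semantic and syntactic quantifier clauses coincide, are all accurate.
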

\begin{proof} By induction on the complexity of $\mathcal{L}^+$-sentences, appealing to Relative Extension in the induction step for $\rightarrow$.
\end{proof}

\subsection{Completeness}

Continue to suppose $\mathcal{L}$ is countable. Define $Sat(\textsf{BQL}_\textsf{CD}^r)$ analogously to $Sat(\textsf{BQL}_\textsf{CD})$. Since the elements of $Sat(\textsf{BQL}_\textsf{CD}^r)$ are closed under modus ponens and $Sat(\textsf{BQL}_\textsf{CD}^r) \subseteq Sat(\textsf{BQL}_\textsf{CD})$, the elements of $Sat(\textsf{BQL}_\textsf{CD}^r)$ are reflexive worlds in the canonical model.

\begin{lemma}[Extension] For $\Gamma$ such that $|\{i: a_i \not \in \Gamma\}| = \omega$: if $\Gamma \not \vdash \phi$ then there exists $\Gamma^* \supseteq \Gamma$ such that $\Gamma^* \in Sat(\textsf{BQL}_\textsf{CD}^r)$ and $\phi \not \in \Gamma^*$.
\end{lemma}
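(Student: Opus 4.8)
The plan is to run a two-sided Lindenbaum--Henkin construction, but with respect to the \emph{full} derivability relation $\vdash$ rather than $\vdash_{[-1]}$. Because $\vdash$ builds in modus ponens, any deductively closed in-set it produces is automatically closed under MP and hence, as the paragraph preceding the lemma notes, a reflexive world. Fixing an enumeration $\{\alpha_n\}_{n \in \omega}$ of the $\mathcal{L}^+$-sentences (using countability of $\mathcal{L}$), I would build increasing finite out-sets $\Delta_n$ and in-sets $\Sigma_n$, starting from $\Sigma_0 = \emptyset$ and $\Delta_0 = \{\phi\}$ (so $\bigvee \Delta_0 = \phi$), while maintaining the invariant $\Gamma \cup \Sigma_n \not\vdash \bigvee \Delta_n$; the base case is exactly the hypothesis $\Gamma \not\vdash \phi$. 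At stage $n$: if $\Gamma \cup \Sigma_n \cup \{\alpha_n\} \not\vdash \bigvee \Delta_n$, put $\alpha_n$ in, and if moreover $\alpha_n = \exists v \chi$ also put a fresh witness $\chi(a_i)$ in; otherwise put $\alpha_n$ out, and if moreover $\alpha_n = \forall v \chi$ also put a fresh witness $\chi(a_j)$ out. Finally set $\Gamma^* = \Gamma \cup \bigcup_n \Sigma_n$. Since each stage consumes at most one fresh constant, the hypothesis $|\{i : a_i \not\in \Gamma\}| = \omega$ guarantees that witnesses remain available throughout.

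The crux is that each clause preserves the invariant, and this is where the unrestricted elimination theorems of \S4.1 are needed, since the relevant formulas may occur unsafely. For the basic out-step, suppose toward a contradiction that $\Gamma \cup \Sigma_n \cup \{\alpha_n\} \vdash \bigvee \Delta_n$ (the out-condition) yet $\Gamma \cup \Sigma_n \vdash \bigvee \Delta_n \vee \alpha_n$; since $\Gamma \cup \Sigma_n \cup \{\bigvee \Delta_n\} \vdash \bigvee \Delta_n$ trivially, Unrestricted $\vee$-Elim gives $\Gamma \cup \Sigma_n \vdash \bigvee \Delta_n$, contradicting the invariant. For the $\exists$-witness step, if $\Gamma \cup \Sigma_n \cup \{\exists v \chi, \chi(a_i)\} \vdash \bigvee \Delta_n$ for fresh $a_i$, then Unrestricted $\exists$-Elim (with $\exists v \chi$ itself supplying the existential premise) yields $\Gamma \cup \Sigma_n \cup \{\exists v \chi\} \vdash \bigvee \Delta_n$, negating the in-condition under which the witness was added.

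The main obstacle, and the only place where the fresh-constant hypothesis is genuinely essential, is the $\forall$-witness step, which secures the totality property. Having put $\forall v \chi$ out, so that $\Gamma \cup \Sigma_n \not\vdash \bigvee \Delta_n \vee \forall v \chi$ by the previous step, I claim a fresh $a_j$ may be added out. If instead $\Gamma \cup \Sigma_n \vdash \bigvee \Delta_n \vee \forall v \chi \vee \chi(a_j)$, then, since $a_j$ occurs neither in the finite subset of $\Gamma \cup \Sigma_n$ used nor in the sentence $\bigvee \Delta_n \vee \forall v \chi$, an application of $\forall$-Int abstracts $a_j$ to give $\Gamma \cup \Sigma_n \vdash \forall w(\bigvee \Delta_n \vee \forall v \chi \vee \chi(w))$, whence CD delivers $\Gamma \cup \Sigma_n \vdash \bigvee \Delta_n \vee \forall v \chi \vee \forall v \chi$, i.e. $\Gamma \cup \Sigma_n \vdash \bigvee \Delta_n \vee \forall v \chi$, contradicting the invariant. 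The delicate point is respecting the eigenvariable constraint C2: it is precisely the freshness of $a_j$, underwritten by the hypothesis on $\Gamma$, that licenses the $\forall$-Int.

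Granting the invariant, I would verify the five defining properties of $Sat(\textsf{BQL}_\textsf{CD}^r)$. Every sentence is decided, since each $\alpha_n$ is placed into $\Sigma$ or $\Delta$ at stage $n$, and the invariant forces $\Delta$ to be disjoint from $\Gamma \cup \Sigma$; hence $\psi \not\in \Gamma^* \iff \psi \in \Delta$. Consistency, closure under $\vdash$, and the disjunction property then all reduce to the same move: if the relevant sentence were misplaced, a derivation of $\bigvee \Delta_N$ at a sufficiently large stage $N$ would result --- via $\bot$-Elim, via $\vee$-Int applied to a disjunct of $\bigvee \Delta_N$, and via $\vee$-Elim, respectively --- contradicting the invariant. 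The existence and totality properties fall straight out of the witnessing clauses: an $\exists v \chi$ lying in $\Gamma^*$ had a witness placed in $\Sigma$ at its own stage, and a $\forall v \chi$ lying outside $\Gamma^*$ (hence in $\Delta$) had a witness placed in $\Delta$, i.e. kept out of $\Gamma^*$. Thus $\Gamma^* \in Sat(\textsf{BQL}_\textsf{CD}^r)$ with $\Gamma^* \supseteq \Gamma$ and $\phi \not\in \Gamma^*$, as required.
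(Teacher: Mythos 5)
Your proposal is correct and is essentially the argument the paper intends: the paper's proof is only a sketch that says to run a Belnap-style two-sided extension with respect to $\vdash$, using Unrestricted $\vee$-Elim and Unrestricted $\exists$-Elim and drawing witnesses from $\{a_i : i \in \omega\}$, which is exactly what you carry out (including the correct use of $\forall$-Int and CD for the totality property and the observation that the freshness hypothesis on $\Gamma$ is what licenses the eigenvariable constraint). Your writeup simply makes explicit the details the paper leaves to the cited Belnap Extension Lemma.
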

\begin{proof} We can use Unrestricted $\vee$-Elim and Unrestricted $\exists$-Elim to run a similar argument to the proof of the Belnap Extension Lemma (see e.g.\ Priest (2002) \S6.2), with witnesses drawn from $\{a_i : i \in \omega\}$. Given that $\mathcal{L}$ is countable, the assumption that $|\{i: a_i \not \in \Gamma\}| = \omega$ ensures we never run out of witnesses.
\end{proof}

\noindent We now drop the assumption that $\mathcal{L}$ is countable. We prove that completeness holds over the extended language $\mathcal{L}^+$.

\begin{lemma}[Weak Completeness] For $|\Gamma| < \omega$: if $\Gamma \models \phi$ then $\Gamma \vdash \phi$. 
\end{lemma}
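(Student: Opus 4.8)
The plan is to argue by contraposition: assuming $\Gamma \not\vdash \phi$, I will exhibit an $\mathcal{L}^+$-model together with a reflexive world that satisfies $\Gamma$ but not $\phi$, thereby witnessing $\Gamma \not\models \phi$. The one genuine complication is that the canonical-model apparatus of \S5.1 --- in particular the Extension and Truth Lemmas --- was developed under the standing assumption that $\mathcal{L}$ is countable, whereas here $\mathcal{L}$ may have arbitrary cardinality. I would dispatch this by first descending to a countable sublanguage, running the canonical-model argument there, and then re-expanding the resulting countermodel to $\mathcal{L}^+$. The three substantive ingredients (the Extension Lemma, the Truth Lemma, and the fact that every $\Gamma^* \in Sat(\textsf{BQL}_\textsf{CD}^r)$ is a reflexive world of $\mathfrak{C}$) are already in hand, so the real work is bookkeeping around this reduction.

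Concretely, I would fix a countable sublanguage $\mathcal{L}_0 \subseteq \mathcal{L}$ whose signature contains every non-logical symbol occurring in the finitely many sentences of $\Gamma \cup \{\phi\}$. Since any derivation in $\mathcal{N}\textsf{BQL}_\textsf{CD}^r$ over $\mathcal{L}_0^+$ is \emph{a fortiori} a derivation over $\mathcal{L}^+$, the hypothesis $\Gamma \not\vdash \phi$ over $\mathcal{L}^+$ immediately gives $\Gamma \not\vdash \phi$ over $\mathcal{L}_0^+$. Because $\Gamma$ is finite, only finitely many of the constants $a_i$ occur in it, so $|\{i : a_i \not\in \Gamma\}| = \omega$ and the Extension Lemma applies over $\mathcal{L}_0$; it yields a prime saturated $\textsf{BQL}_\textsf{CD}^r$-theory $\Gamma^* \supseteq \Gamma$ with $\phi \not\in \Gamma^*$. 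Passing to the canonical model $\mathfrak{C}$ over $\mathcal{L}_0$, the Truth Lemma gives $\mathfrak{C}, \Gamma^* \Vdash \gamma$ for each $\gamma \in \Gamma$ (as $\Gamma \subseteq \Gamma^*$) and $\mathfrak{C}, \Gamma^* \not\Vdash \phi$ (as $\phi \not\in \Gamma^*$); moreover $\Gamma^* \in Sat(\textsf{BQL}_\textsf{CD}^r)$, so $\Gamma^*$ is a reflexive world of $\mathfrak{C}$. Thus $\mathfrak{C}$ already refutes $\Gamma \models \phi$ in the sublanguage.

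Finally I would expand $\mathfrak{C}$ to a full $\mathcal{L}^+$-model $\mathfrak{C}'$ by interpreting each symbol of $\mathcal{L}^+ \setminus \mathcal{L}_0^+$ trivially: sending each new relation symbol to the empty relation at every world (which is vacuously persistent and preserved along $\prec$), each new constant to a fixed element of the domain, and each new function symbol to an arbitrary fixed function $M^n \to M$. Since no symbol of $\mathcal{L} \setminus \mathcal{L}_0$ occurs in $\Gamma \cup \{\phi\}$, an easy induction on construction shows that the satisfaction of these sentences at every world of $\mathfrak{C}'$ agrees with that in $\mathfrak{C}$, so the reflexive world $\Gamma^*$ still satisfies $\Gamma$ and refutes $\phi$. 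This establishes $\Gamma \not\models \phi$, completing the contrapositive. I expect the only point needing care to be precisely this reduction-and-re-expansion step --- verifying that restricting to $\mathcal{L}_0$ preserves non-derivability and that the trivial expansion preserves both the model conditions (chiefly persistence) and the truth values of $\Gamma$ and $\phi$ --- rather than any new logical idea.
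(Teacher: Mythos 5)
Your proposal is correct and follows essentially the same route as the paper's own proof: restrict to a countable sublanguage $\mathcal{L}_0$ containing $\Gamma \cup \{\phi\}$, note that non-derivability transfers downward, apply Extension and Truth over $\mathcal{L}_0^+$ to get a reflexive world of the canonical model satisfying $\Gamma$ but not $\phi$, and then expand arbitrarily back to $\mathcal{L}^+$. The extra detail you give about the trivial expansion is just an unpacking of the paper's ``an arbitrary expansion of $\mathfrak{C}$ to $\mathcal{L}^+$'' step.
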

\begin{proof} Suppose $\Gamma \not \vdash \phi$.  Since $|\Gamma| < \omega$, we can find a countable first-order language $\mathcal{L}_0 \subseteq \mathcal{L}$ such that $\Gamma \cup \{\phi\} \subseteq \mathcal{L}_0^+$.  A forteriori, there does not exist a proof in $\mathcal{N}\textsf{BQL}_\textsf{CD}^r \upharpoonright \mathcal{L}_0^+$ of $\phi$ from $\Gamma$. Since $|\{i: a_i \not \in \Gamma\}| = \omega$, Extension gives $\Gamma^* \supseteq \Gamma$ such that $\Gamma^* \in Sat(\textsf{BQL}_\textsf{CD}^r)$ (where $Sat(\textsf{BQL}_\textsf{CD}^r)$ is defined over $\mathcal{L}_0^+$) and $\phi \not \in \Gamma^*$.  Let $\mathfrak{C}$ denote the canonical model over $\mathcal{L}_0^+$. Then, by Truth: $\mathfrak{C}, \Gamma^* \Vdash \Gamma$ and $\mathfrak{C}, \Gamma^* \not \Vdash \phi$.  Since $\Gamma^* \prec \Gamma^*$, an arbitrary expansion of $\mathfrak{C}$ to $\mathcal{L}^+$ gives $\Gamma \not \models \phi$. 
\end{proof}

\begin{theorem}[Completeness] If $\Gamma \models \phi$ then $\Gamma \vdash \phi$.
\end{theorem}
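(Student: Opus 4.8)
The plan is to obtain the general completeness theorem by bootstrapping from the finite case that has already been established. The two ingredients are Weak Completeness, which gives the desired implication whenever the premise set is finite, and Compactness, which lets us pass from an arbitrary $\Gamma$ down to a finite subset. Since Soundness (the converse direction) is already in hand as a corollary, nothing further is needed to pin down the logic defined by $\models$.

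Concretely, I would begin by assuming $\Gamma \models \phi$. Applying Compactness, I extract a finite $\Gamma_0 \subseteq \Gamma$ with $\Gamma_0 \models \phi$. Because $|\Gamma_0| < \omega$, Weak Completeness immediately yields $\Gamma_0 \vdash \phi$. It then remains only to observe that $\vdash$ is monotone in its premise set: any proof-tree in $\mathcal{N}\textsf{BQL}_\textsf{CD}^r$ witnessing $\Gamma_0 \vdash \phi$ is already a legitimate proof of $\phi$ from $\Gamma$, since all of its undischarged leaves lie in $\Gamma_0 \subseteq \Gamma$, and the eigenvariable constraints C2--C4 together with the safety constraint C5 restrict only the constants and discharges actually occurring within the tree, not the ambient premise set from which leaves may be drawn. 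Enlarging $\Gamma_0$ to $\Gamma$ therefore cannot invalidate the proof, giving $\Gamma \vdash \phi$.

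I do not anticipate any genuine obstacle: the statement is a direct corollary of Compactness and Weak Completeness, and the single point deserving (minimal) care is the monotonicity observation. That observation is immediate from the definition of $\vdash$ as the existence of a proof whose open assumptions are among the premises, so the whole argument is a short two-step combination of the preceding results.
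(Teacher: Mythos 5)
Your proposal is correct and follows exactly the paper's route: the paper's proof is simply ``Immediate from compactness and weak completeness,'' and your argument spells out precisely that two-step combination (plus the routine monotonicity of $\vdash$, which the paper leaves implicit).
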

\begin{proof} Immediate from compactness and weak completeness.
\end{proof}

\subsection{Disjunction and existence Properties}

We can also use the canonical model to show that $\textsf{BQL}_\textsf{CD}^r$ satisfies the disjunction and existence properties over the base language $\mathcal{L}$.

\begin{lemma}[Intersection] For $I \neq \emptyset$, let $\{w\} \cup \{u_i\}_{i \in I} \subseteq \mathfrak{M}$ be such that (i) $|R^n|(w) = \bigcap_{i \in I} |R^n|(u_i)$, (ii) every $u_i$ is reflexive, (iii) $w \prec u_i$ for every $i$ and (iv) if $w \prec z$ and $z \neq w$ then $u_i \prec z$ for some $i$. Then, for $\phi(\overline{v}) \in \mathcal{L} \setminus \{\vee, \exists\}$: $w \Vdash \phi(\overline{a})$ iff for all $i$: $u_i \Vdash \phi(\overline{a})$.
\end{lemma}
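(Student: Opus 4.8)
The plan is to proceed by induction on the complexity of the $\{\vee,\exists\}$-free formula $\phi$, showing that the equivalence $w \Vdash \phi(\overline{a})$ iff $u_i \Vdash \phi(\overline{a})$ for all $i$ propagates through the connectives that remain. The base cases and the cases for $\wedge$ and $\forall$ should all be routine, since there the relevant satisfaction clauses commute with the universal quantifier ``for all $i$'' that is hidden in the conclusion. For an atomic formula $R^n(\overline{t})$, the claim is immediate from hypothesis (i): $\langle |t_1|(\overline{a}),\dots,|t_n|(\overline{a}) \rangle \in |R^n|(w)$ iff it lies in $\bigcap_{i \in I} |R^n|(u_i)$, i.e.\ iff it lies in every $|R^n|(u_i)$; the cases $\top$ and $\bot$ are trivial. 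For $\phi = \psi \wedge \chi$ I would distribute ``for all $i$'' over the conjunction and apply the induction hypothesis to $\psi$ and $\chi$, and for $\phi = \forall v\,\psi$ I would commute the two universal quantifiers (over $b \in M$ and over $i \in I$) before applying the induction hypothesis to $\psi$. This is also the point at which the exclusion of $\vee$ and $\exists$ becomes visible: those commutations fail because distinct $u_i$ could witness distinct disjuncts or distinct existential instances, so ``for all $i$: $u_i \Vdash \psi \vee \chi$'' need not force $w$ to satisfy one fixed disjunct.

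The only substantive case is $\phi = \psi \rightarrow \chi$, where the accessibility quantifier in the satisfaction clause for $\rightarrow$ forces me to use the structural hypotheses (ii)--(iv) together with the transitivity of $\prec$. For the left-to-right direction, suppose $w \Vdash (\psi \rightarrow \chi)(\overline{a})$ and fix $i$; given any $y \succ u_i$ with $y \Vdash \psi(\overline{a})$, hypothesis (iii) gives $w \prec u_i$, so transitivity yields $w \prec y$, whence $y \Vdash \chi(\overline{a})$. This direction needs no induction hypothesis and is purely structural.

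The right-to-left direction is the crux. Assume $u_i \Vdash (\psi \rightarrow \chi)(\overline{a})$ for every $i$, and let $z \succ w$ with $z \Vdash \psi(\overline{a})$; I must show $z \Vdash \chi(\overline{a})$, splitting on whether $z = w$. If $z \neq w$, hypothesis (iv) supplies some $i$ with $u_i \prec z$, and then $u_i \Vdash (\psi \rightarrow \chi)(\overline{a})$ together with $z \Vdash \psi(\overline{a})$ gives $z \Vdash \chi(\overline{a})$ directly. The delicate subcase is $z = w$: here I have $w \Vdash \psi(\overline{a})$, so the induction hypothesis applied to $\psi$ yields $u_i \Vdash \psi(\overline{a})$ for all $i$; reflexivity of each $u_i$ from hypothesis (ii) gives $u_i \prec u_i$, so $u_i \Vdash \chi(\overline{a})$ for all $i$; and the induction hypothesis applied to $\chi$ then returns $w \Vdash \chi(\overline{a})$, i.e.\ $z \Vdash \chi(\overline{a})$. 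I expect this subcase to be the main obstacle, since it is the only place where I must invoke the induction hypothesis on \emph{both} subformulas and lean on reflexivity to ``loop back'', transferring satisfaction from $w$ out to the $u_i$ and then returning it to $w$; it is precisely this maneuver that explains why every $u_i$ must be reflexive rather than merely accessible from $w$.
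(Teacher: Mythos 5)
Your proposal is correct and follows essentially the same route as the paper: induction on $\{\vee,\exists\}$-free formulas, with the only substantive case being $\rightarrow$, where the forward direction is handled via (iii) (the paper cites Persistence, which is your transitivity unfolding), and the backward direction splits on $z = w$ versus $z \neq w$, using (iv) in the latter case and the induction hypothesis on both subformulas together with reflexivity of the $u_i$ in the former. The paper phrases the $z = w$ subcase as a reductio (deriving $u_j \not\Vdash \phi \rightarrow \psi$ for some $j$), but that is just the contrapositive of your direct argument.
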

\begin{proof} By induction on the construction of $\mathcal{L} \setminus \{\vee, \exists\}$-formulas. The base case is easy.  The induction steps are also easy except for $\rightarrow$.

\underline{$\rightarrow$} $\implies$ This direction follows from (iii) and Persistence.

$\impliedby$ Suppose $u_i \Vdash (\phi \rightarrow \psi)(\overline{a})$ for all $i$. Suppose for a reductio that $w \not \Vdash (\phi \rightarrow \psi)(\overline{a})$. Then $z \Vdash \phi(\overline{a})$ and $z \not \Vdash \psi(\overline{a})$ for some $z \succ w$. There are two cases to consider.

\underline{Case 1} $z \neq w$. Then, by (iv), $u_i \prec z$ for some $i$.  So $u_i \not \Vdash (\phi \rightarrow \psi)(\overline{a})$, which is a contradiction.

\underline{Case 2} $z = w$.  Then, by the induction hypothesis, $u_i \Vdash \phi(\overline{a})$ for all $i$ and $u_j \not \Vdash \psi(\overline{a})$ for some $j$.  So, by (ii), $u_j \not \Vdash (\phi \rightarrow \psi)(\overline{a})$, which is a contradiction.
\end{proof}

\begin{lemma}[Weak Disjunction Property] For $\Gamma \subseteq \mathcal{L} \setminus \{\vee, \exists\}$ such that $|\Gamma| \leq \omega$: if $\Gamma \models \phi \vee \psi$ then $\Gamma \models \phi$ or $\Gamma \models \psi$.
\end{lemma}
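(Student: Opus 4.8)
The plan is to prove the contrapositive: assuming $\Gamma \not\models \phi$ and $\Gamma \not\models \psi$, I will build a single \emph{reflexive} world satisfying all of $\Gamma$ but refuting both $\phi$ and $\psi$, hence refuting $\phi \vee \psi$. Since $|\Gamma| \leq \omega$ and $\phi,\psi$ are single formulas, I first pass to a countable sublanguage $\mathcal{L}_0 \subseteq \mathcal{L}$ with $\Gamma \cup \{\phi,\psi\} \subseteq \mathcal{L}_0^+$, so that the canonical model machinery of \S5.1--5.2 applies. By Soundness, $\Gamma \not\models \phi$ and $\Gamma \not\models \psi$ give $\Gamma \not\vdash \phi$ and $\Gamma \not\vdash \psi$; since $\Gamma$ contains none of the $a_i$, the hypothesis $|\{i : a_i \not\in \Gamma\}| = \omega$ of Extension is met, so Extension yields $\Gamma_1, \Gamma_2 \in Sat(\textsf{BQL}_\textsf{CD}^r)$ with $\Gamma \subseteq \Gamma_1 \cap \Gamma_2$, $\phi \not\in \Gamma_1$ and $\psi \not\in \Gamma_2$. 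Crucially, both $\Gamma_1$ and $\Gamma_2$ are reflexive worlds of \emph{one and the same} canonical model $\mathfrak{C}$ over $\mathcal{L}_0^+$, so they automatically share the constant domain $C$ of closed terms; this is what lets me glue them despite the constant-domain requirement.

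Next I would adjoin a fresh reflexive root $w$ to $\mathfrak{C}$, with the same domain $C$, setting $|R^n|(w) = |R^n|(\Gamma_1) \cap |R^n|(\Gamma_2)$ for each relation symbol, and defining accessibility by $w \prec w$, $w \prec \Gamma_1$, $w \prec \Gamma_2$, together with $w \prec z$ whenever $\Gamma_1 \prec z$ or $\Gamma_2 \prec z$. One checks that the result is still an $\mathcal{L}_0^+$-model: the added arrows keep $\prec$ transitive, and persistence holds since $|R^n|(w) \subseteq |R^n|(\Gamma_i) \subseteq |R^n|(z)$ for every successor. This $w$ is designed precisely to meet the four hypotheses of the Intersection Lemma with $I = \{1,2\}$ and $u_i = \Gamma_i$: conditions (i) and (iii) hold by construction, (ii) because $\Gamma_1, \Gamma_2 \in Sat(\textsf{BQL}_\textsf{CD}^r)$ are reflexive, and (iv) because every non-$w$ successor $z$ of $w$ satisfies $\Gamma_1 \prec z$ or $\Gamma_2 \prec z$ (this covers $z = \Gamma_1, \Gamma_2$ themselves via their reflexivity).

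With the Intersection Lemma in hand, the conclusion is quick. Because every $\gamma \in \Gamma$ lies in $\mathcal{L} \setminus \{\vee, \exists\}$ and, by the Truth Lemma, $\Gamma_1 \Vdash \gamma$ and $\Gamma_2 \Vdash \gamma$, the Intersection Lemma gives $w \Vdash \gamma$; thus $w \Vdash \Gamma$. On the other hand, since $w \prec \Gamma_1$ and $\Gamma_1 \not\Vdash \phi$, the contrapositive of Persistence yields $w \not\Vdash \phi$, and symmetrically $w \not\Vdash \psi$ via $\Gamma_2$; hence $w \not\Vdash \phi \vee \psi$. As $w$ is reflexive, taking the reduct to $\mathcal{L}_0$ and expanding arbitrarily to an $\mathcal{L}$-model witnesses $\Gamma \not\models \phi \vee \psi$, completing the contrapositive.

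I expect the main obstacle to be not any single deep step but the bookkeeping that makes the gluing legitimate: ensuring the two refuting worlds can be placed in a common model with a shared domain (resolved by reading both off the single canonical model rather than off arbitrary models), and verifying that the freshly added reflexive root $w$ satisfies condition (iv) of the Intersection Lemma without creating spurious successors that escape $\Gamma_1$ and $\Gamma_2$. The restriction $\Gamma \subseteq \mathcal{L} \setminus \{\vee, \exists\}$ is essential and is used exactly once --- in transferring $\Gamma$ from the $\Gamma_i$ down to $w$ --- since the Intersection Lemma fails for disjunctions and existentials.
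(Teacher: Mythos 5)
Your proof is correct and follows essentially the route the paper intends: the Intersection Lemma is stated immediately beforehand precisely so that a fresh reflexive root with intersected relation-extensions can be glued below two reflexive canonical-model worlds obtained from Soundness plus Extension, and your handling of the countable-sublanguage reduction, condition (iv), and the final expansion back to $\mathcal{L}$ matches the paper's setup for Weak Completeness. (The paper itself only cites Middleton (2020) for the details, but your argument is the evident adaptation to the reflexive-root setting.)
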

\begin{proof} Similar to the proof of the weak disjunction property in Middleton (2020).
\end{proof}

\begin{theorem}[Disjunction Property] For $\Gamma \subseteq \mathcal{L} \setminus \{\vee, \exists\}$: if $\Gamma \models \phi \vee \psi$ then $\Gamma \models \phi$ or $\Gamma \models \psi$.
\end{theorem}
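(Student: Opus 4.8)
The plan is to leverage Compactness to collapse the arbitrary-cardinality premise set $\Gamma$ down to a finite subset, at which point the Weak Disjunction Property applies directly. The only gap between the two statements is the cardinality restriction $|\Gamma| \leq \omega$ appearing in the weak version, and Compactness is precisely the tool that removes it. Note that the syntactic restriction $\Gamma \subseteq \mathcal{L} \setminus \{\vee, \exists\}$ is automatically inherited by any subset of $\Gamma$, so passing to a subset costs nothing.

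First I would suppose $\Gamma \models \phi \vee \psi$. By Compactness, there is a finite $\Gamma_0 \subseteq \Gamma$ with $\Gamma_0 \models \phi \vee \psi$. Since $\Gamma_0 \subseteq \Gamma \subseteq \mathcal{L} \setminus \{\vee, \exists\}$ and $|\Gamma_0| < \omega$ (so in particular $|\Gamma_0| \leq \omega$), the hypotheses of the Weak Disjunction Property are met, and I conclude $\Gamma_0 \models \phi$ or $\Gamma_0 \models \psi$.

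Finally I would lift this conclusion back up to $\Gamma$ using monotonicity of $\models$, which is immediate from the definition: since $\Gamma_0 \subseteq \Gamma$, any reflexive world forcing all of $\Gamma$ forces all of $\Gamma_0$, so for any $\chi$, $\Gamma_0 \models \chi$ entails $\Gamma \models \chi$. Applying this with $\chi = \phi$ in the first case and $\chi = \psi$ in the second yields $\Gamma \models \phi$ or $\Gamma \models \psi$, as required.

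There is essentially no obstacle here: the entire substantive content has already been discharged in the Weak Disjunction Property (which in turn rests on the Intersection Lemma and the canonical-model construction). The proof is a routine two-step reduction — Compactness down to the finite (hence countable) case, then monotonicity back up — and the only point worth stating explicitly is that both the cardinality bound and the restriction to $\mathcal{L} \setminus \{\vee, \exists\}$ survive the passage from $\Gamma$ to the finite subset $\Gamma_0$.
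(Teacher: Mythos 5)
Your proposal is correct and is precisely the argument the paper compresses into ``Immediate from compactness and the weak disjunction property'': Compactness down to a finite $\Gamma_0$, the weak version applied to $\Gamma_0$, and monotonicity of $\models$ to lift the conclusion back to $\Gamma$. Nothing to add.
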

\begin{proof} Immediate from compactness and the weak disjunction property. 
\end{proof}

\begin{lemma}[Weak Existence Property] Suppose $\mathcal{L}$ contains at least one constant symbol. Then, for $\Gamma \subseteq \mathcal{L} \setminus \{\vee, \exists\}$ such that $|\Gamma| \leq \omega$: $\Gamma \models \exists v \phi$ only if $\Gamma \models \phi(t)$ for some $t \in \mathcal{L}$.
\end{lemma}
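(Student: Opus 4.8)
The plan is to prove the contrapositive: assuming $\Gamma \not\models \phi(t)$ for every closed $\mathcal{L}$-term $t$, I build a single reflexive world that satisfies $\Gamma$ while refuting $\phi(b)$ at \emph{every} element $b$ of its domain, so that $\Gamma \not\models \exists v \phi$. As in the proof of Weak Completeness, I first pass to a countable $\mathcal{L}_0 \subseteq \mathcal{L}$ with $\Gamma \cup \{\phi\} \subseteq \mathcal{L}_0$ and with $\mathcal{L}_0$ containing at least one constant, carry out the construction inside the canonical model $\mathfrak{C}$ over $\mathcal{L}_0^+$, and expand back to $\mathcal{L}^+$ at the very end. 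The essential complication is that the domain of $\mathfrak{C}$ is the set of \emph{all} closed $\mathcal{L}_0^+$-terms, so the worlds I use must refute $\phi(t)$ not merely for closed $\mathcal{L}_0$-terms but for every closed $\mathcal{L}_0^+$-term, including those built from the Henkin constants $a_i$.

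The first and main step is therefore to upgrade the hypothesis. By Soundness, $\Gamma \not\vdash \phi(s)$ for every closed $\mathcal{L}_0$-term $s$. Suppose toward a contradiction that $\Gamma \vdash \phi(t)$ for some closed $\mathcal{L}_0^+$-term $t$, and let $a_{i_1}, \ldots, a_{i_k}$ be the fresh constants occurring in $t$. Since $\Gamma \cup \{\phi\} \subseteq \mathcal{L}$ contains none of these constants, I may substitute a fixed constant $c \in \mathcal{L}_0$ for each $a_{i_j}$ throughout the given proof — renaming eigenvariables where the constraints C2, C3 would otherwise be violated, exactly as in the discussion of chaining proofs in \S4.1 — to obtain a proof from $\Gamma$ of $\phi(t')$, where $t'$ is the closed $\mathcal{L}_0$-term obtained from $t$ by the same substitution. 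This contradicts $\Gamma \not\vdash \phi(t')$. Hence $\Gamma \not\vdash \phi(t)$ for \emph{every} closed $\mathcal{L}_0^+$-term $t$.

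Now, for each closed $\mathcal{L}_0^+$-term $t$, Extension applied to $\Gamma \not\vdash \phi(t)$ yields $\Gamma_t \in Sat(\textsf{BQL}_\textsf{CD}^r)$ with $\Gamma \subseteq \Gamma_t$ and $\phi(t) \notin \Gamma_t$; its hypothesis is met because $\Gamma \subseteq \mathcal{L}$ uses no $a_i$, leaving infinitely many witnesses free. Each $\Gamma_t$ is a reflexive world of $\mathfrak{C}$. I adjoin a fresh root $w$, take the index set $I$ of the Intersection lemma to be the closed $\mathcal{L}_0^+$-terms with $u_t = \Gamma_t$, set $|R^n|(w) = \bigcap_t |R^n|(\Gamma_t)$, and declare $w \prec w$ together with $w \prec z$ precisely when $\Gamma_t \prec z$ for some $t$. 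This is an $\mathcal{L}_0^+$-model: the persistence constraint holds since $|R^n|(w) \subseteq |R^n|(\Gamma_t) \subseteq |R^n|(z)$ whenever $w \prec z$, and $\prec$ stays transitive because $w$ is a root. Conditions (i)--(iv) of Intersection are immediate — in particular $w \prec \Gamma_t$ for all $t$ (as each $\Gamma_t$ is reflexive), and for (iv) any $z \neq w$ above $w$ lies, by construction, above some $\Gamma_t$.

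It remains to read off the two facts I need. Since $\Gamma \subseteq \mathcal{L} \setminus \{\vee, \exists\}$ and every $\gamma \in \Gamma$ lies in each $\Gamma_t$, Truth gives $\Gamma_t \Vdash \gamma$ for all $t$, so Intersection gives $w \Vdash \gamma$; hence $w \Vdash \Gamma$. Conversely, for any domain element $t$ we have $w \prec \Gamma_t$ and, by Truth, $\Gamma_t \not\Vdash \phi(t)$, so Persistence (contrapositively) yields $w \not\Vdash \phi(t)$; as $t$ ranges over the whole domain, $w \not\Vdash \exists v \phi$. Because $w$ is reflexive, an arbitrary expansion of $\mathfrak{C}$ to an $\mathcal{L}^+$-model exhibits a reflexive world satisfying $\Gamma$ but not $\exists v \phi$, giving $\Gamma \not\models \exists v \phi$. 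I expect the substitution step of the second paragraph to be the only genuine obstacle: everything else parallels the disjunction-property argument, but since $\exists v \phi$ quantifies over the entire canonical domain I cannot index the intersection by closed $\mathcal{L}$-terms alone, and the eigenvariable bookkeeping needed to rule out provability of $\phi(t)$ for $a_i$-laden $t$ is the delicate point.
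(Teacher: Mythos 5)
Your argument is correct and follows the route the paper intends: you instantiate the Intersection lemma with a family of saturated extensions $\Gamma_t$, one for each closed term of the canonical domain, placed above a fresh reflexive root, in parallel with the disjunction-property argument the paper cites from Middleton (2020). The substitution step you isolate --- reducing unprovability of $\phi(t)$ for $a_i$-laden closed terms $t$ to the $\mathcal{L}_0$-term case by replacing each $a_i$ with a constant of $\mathcal{L}_0$ and renaming eigenvariables --- is precisely where the existence property demands more care than the disjunction property, and you handle it correctly.
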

\begin{proof} Similar to the proof of the weak existence property in Middleton (2020).
\end{proof}

\begin{theorem}[Existence Property] Suppose $\mathcal{L}$ contains at least one constant symbol. Then, for $\Gamma \subseteq \mathcal{L} \setminus \{\vee, \exists\}$: $\Gamma \models \exists v \phi$ only if $\Gamma \models \phi(t)$ for some $t \in \mathcal{L}$.
\end{theorem}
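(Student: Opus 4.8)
The plan is to prove this exactly as the Disjunction Property was proved two statements earlier: reduce the general claim to its finitary counterpart, the Weak Existence Property, by invoking Compactness. The Weak Existence Property already does all the semantic work (constructing a witness via the canonical model, presumably by an intersection-style argument as in the Disjunction case), so the theorem itself should be a short corollary. The only three ingredients I need are Compactness, the Weak Existence Property, and the trivial monotonicity of $\models$.

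Concretely, I would argue as follows. Assume the hypotheses: $\mathcal{L}$ contains at least one constant symbol, $\Gamma \subseteq \mathcal{L} \setminus \{\vee, \exists\}$, and $\Gamma \models \exists v \phi$. By Compactness there is a finite $\Gamma_0 \subseteq \Gamma$ with $\Gamma_0 \models \exists v \phi$. I would then check that $\Gamma_0$ inherits the hypotheses of the Weak Existence Property: since $\Gamma_0 \subseteq \Gamma$ we still have $\Gamma_0 \subseteq \mathcal{L} \setminus \{\vee, \exists\}$, and since $\Gamma_0$ is finite we have $|\Gamma_0| \leq \omega$; the constant-symbol hypothesis on $\mathcal{L}$ is unchanged and is precisely what the weak version requires to build a named witness. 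Hence the Weak Existence Property applies and yields some $t \in \mathcal{L}$ with $\Gamma_0 \models \phi(t)$. Finally, monotonicity of $\models$ lifts this back to $\Gamma$: unpacking the definition of $\models$, for any model $\mathfrak{M}$ and reflexive world $w$, if $w \Vdash \Gamma$ then $w \Vdash \Gamma_0$ (as $\Gamma_0 \subseteq \Gamma$), so $w \Vdash \phi(t)$; thus $\Gamma \models \phi(t)$, which is what we wanted.

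I do not expect a genuine obstacle here, since the substantive content lives in the Weak Existence Property. The only points deserving care are bookkeeping: verifying that the finite $\Gamma_0$ delivered by Compactness still lies in the restricted fragment $\mathcal{L} \setminus \{\vee, \exists\}$ and still satisfies the cardinality bound, and noting that the witnessing term $t$ is drawn from $\mathcal{L}$ (not merely $\mathcal{L}^+$) because the weak version already guarantees this. If anything is delicate, it is simply making explicit that the witness found for the finite subset $\Gamma_0$ continues to serve for all of $\Gamma$, which is immediate from the one-line monotonicity observation above.
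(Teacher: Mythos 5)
Your proposal is correct and is exactly the paper's argument: the paper proves the Existence Property as ``immediate from compactness and the weak existence property,'' which is precisely the Compactness--Weak-Existence--monotonicity chain you spell out. The bookkeeping details you verify (finiteness, membership in $\mathcal{L} \setminus \{\vee, \exists\}$, the witness lying in $\mathcal{L}$) are all as the paper intends.
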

\begin{proof} Immediate from compactness and the weak existence property.
\end{proof}

\section{Comparison of $\textsf{BQL}_\textsf{CD}^r$ to $\textsf{BQL}_\textsf{CD}$} 

We can use the reduction theorem to show that $\textsf{BQL}_\textsf{CD}^r$ and $\textsf{BQL}_\textsf{CD}$ have exactly the same theorems. 

\begin{theorem} $\vdash_{[-1]} \phi$ iff $\vdash \phi$.
\end{theorem}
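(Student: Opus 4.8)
The plan is to prove the two directions separately; the left-to-right direction is immediate, and all of the work lies in the converse.

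For $\vdash_{[-1]} \phi \implies \vdash \phi$: since $\mathcal{N}\textsf{BQL}_\textsf{CD}$ is obtained from $\mathcal{N}\textsf{BQL}_\textsf{CD}^r$ simply by deleting $\rightarrow$-Elim, every proof in $\mathcal{N}\textsf{BQL}_\textsf{CD}$ is already a proof in $\mathcal{N}\textsf{BQL}_\textsf{CD}^r$. Restriction C5 is satisfied vacuously here, since a $\rightarrow$-Elim-free proof contains no unsafe occurrences of assumptions. Hence this inclusion is trivial.

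For the converse, suppose $\vdash \phi$. By the Reduction theorem, $\vdash_{[-1]} \Box^n \phi$ for some $n$. It therefore suffices to establish \emph{box elimination} for $\vdash_{[-1]}$, namely that $\vdash_{[-1]} \Box \psi$ implies $\vdash_{[-1]} \psi$; iterating this $n$ times strips off all the boxes from $\Box^n \phi$ and yields $\vdash_{[-1]} \phi$. I would prove box elimination semantically, using the soundness and completeness of $\mathcal{N}\textsf{BQL}_\textsf{CD}$ with respect to $\textsf{BQL}_\textsf{CD}$, i.e.\ the all-worlds semantics obtained from $\models$ by dropping the restriction to reflexive worlds (established in Middleton (2020), with the canonical-model half recalled in \S5.1). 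Soundness gives that $\Box \psi = \top \rightarrow \psi$ is satisfied at every world of every $\textsf{BQL}_\textsf{CD}$-model whenever $\vdash_{[-1]} \Box \psi$.

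The crux is then the model-theoretic claim that if $\top \rightarrow \psi$ is valid at all worlds then so is $\psi$. Given any model $\mathfrak{M} = \langle W, \prec, M, |\mathord{\cdot}| \rangle$ and any $w \in W$, I would expand $\mathfrak{M}$ to $\mathfrak{M}'$ by adjoining a fresh world $w_0$ with $w_0 \prec v$ for every $v \in W$ and with $|R^n|(w_0) = \emptyset$ for every relation symbol. Then $\mathfrak{M}'$ is again a $\textsf{BQL}_\textsf{CD}$-model: transitivity holds because $w_0$ sees all of $W$ and has no predecessors, and persistence holds because $\emptyset \subseteq |R^n|(v)$. Since no edges were added out of the original worlds, an easy induction shows that satisfaction at each $v \in W$ is unchanged. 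Now $w_0 \Vdash \top \rightarrow \psi$ by validity, and $w_0 \prec w$ with $w \Vdash \top$, so $w \Vdash \psi$ in $\mathfrak{M}'$ and hence in $\mathfrak{M}$. As $\mathfrak{M}$ and $w$ were arbitrary, $\psi$ is valid at all worlds, and completeness gives $\vdash_{[-1]} \psi$, as required.

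The main obstacle is box elimination itself: without $\rightarrow$-Elim there is no direct syntactic move that discharges the leading $\top$ of $\Box \psi$, and the real content of the theorem is precisely that this discharge is nonetheless admissible at the level of theorems (though \emph{not} at the level of consequence, where modus ponens genuinely separates the two systems). The add-a-root construction is what makes it work, and the only delicate points are checking that adjoining $w_0$ preserves transitivity, persistence, and the truth values at the old worlds, all of which are routine once the definitions are unwound.
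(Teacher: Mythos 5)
Your proposal is correct and takes essentially the paper's approach: both arguments combine the Reduction theorem with soundness and completeness of $\vdash_{[-1]}$ for the all-worlds semantics, and both hinge on adjoining fresh irreflexive worlds below a given model to relate $\Box^n\phi$ to $\phi$. The only difference is cosmetic --- you strip one box at a time by adding a single root beneath every world and arguing in the positive direction, whereas the paper argues by reductio, adding a chain of $n$ worlds below a countermodel of $\phi$ to refute $\Box^n\phi$ in one step.
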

\begin{proof} The left-right direction is trivial. For the converse, suppose $\vdash \phi$. Then, by Reduction, $\vdash_{[-1]} \Box^n \phi$ for some $n$. Suppose for a reductio that $\not \vdash_{[-1]} \phi$. By an almost identical argument to the completeness theorem for $\vdash$, $\vdash_{[-1]}$ is complete with respect to the consequence relation over $\mathcal{L}^+$ obtained by dropping the restriction on $\models$ to reflexive worlds. Thus, there exists an $\mathcal{L}^+$-model $\mathfrak{M}$ such that $w \not \Vdash \phi$ for some $w \in \mathfrak{M}$. Add a chain of $n$ worlds below $w$ as follows:
$$
\begin{tikzpicture}[modal,node distance=1cm,world/.append style={minimum
size=1cm}]
\node[point] (root) [label=right: $w$] {};

\node[point] (w0) [below of=root, label=right: $u_1$] {};

\node[point] (w1) [below of=w0, label=right: $u_2$] {};

\node[point] (w2) [below of=w1, label=right: $u_n$] {};

\path[->] (w0) edge (root);

\path[->] (w1) edge (w0);

\path[-] (w2) edge[dotted] (w1);
\end{tikzpicture}
$$
Then $u_n \not \Vdash \Box^n \phi$. So, since $\vdash_{[-1]}$ is truth-preserving at every world in an $\mathcal{L}^+$-model, $\not \vdash_{[-1]} \Box^n \phi$, which is a contradiction.
\end{proof}

\section{Adding identity}

So far we have treated the identity predicate $=$ as a non-logical, theory-specific predicate. The question then arises as to whether the results obtained in this paper can be generalized to a setting in which $=$ is treated as a logical constant. Here we face a trade-off, which we also face when we try to add $=$ to intuitionistic logic. Model-theoretically, the most natural way to add $=$ to $\textsf{BQL}_\textsf{CD}^r$ is to restrict the class of $\mathcal{L}$-models to just those models $\mathfrak{M}$ such that for every world $w \in \mathfrak{M}$, $|=|(w) = \{\langle a, a \rangle: a \in dom(\mathfrak{M})\}$. However, this results in a logic which violates the disjunction property, since $t_1 = t_2 \vee (t_1 = t_2 \rightarrow \bot)$ is a theorem even though neither disjunct is. In order to preserve the disjunction property, we can instead restrict the class of $\mathcal{L}$-models to just those models such that $|=|(w)$ is a congruence relation at $w$ (i.e.\ $|=|(w)$ is an equivalence relation on the domain such that if $x_1$,...,$x_n$ are equivalent to $y_1$,...,$y_n$ respectively then (i) $|f^n|(x_1,...,x_n)$ is equivalent to $|f^n|(y_1,...,y_n)$ and (ii) $\langle x_1,...,x_n \rangle \in |R^n|(w)$ iff $\langle y_1,...,y_n \rangle \in |R^n|(w)$).\begin{footnote}{$|=|$ must still satisfy the persistence constraint.}\end{footnote} When we take the latter approach, completeness is obtained by adding the following rules to $\mathcal{N}\textsf{BQL}_\textsf{CD}^r$:
$$
[t = t]\hspace{2mm}(=\hspace{-1mm}\text{-Int})
\qquad
\infer[(=\hspace{-1mm}\text{-Elim})]
	{\phi(t_2)}
	{
	t_1 = t_2 
	&
	\phi(t_1)
	}
$$
All proofs and definitions (including the definition of the canonical model) are essentially unchanged. In particular, in the proof of Relative Deduction, $=$-Int is handled the same way as $\top$-Int and $=$-Elim falls under Case 2 of the inner induction steps.

Matters are \textit{slightly} more involved if we instead take the first approach and treat $=$ as ``real" identity. In this case, we obtain completeness by adding to $\mathcal{N}\textsf{BQL}_\textsf{CD}^r$ $=$-Int, $=$-Elim and excluded middle for identity (call the resulting logic $\textsf{BQL}_\textsf{CD=}^r$):
$$
[t_1 = t_2 \vee (t_1 = t_2 \rightarrow \bot)].
$$
We no longer prove completeness by defining a single canonical model, however. Rather, we define a different canonical model for each $\Gamma \not \vdash \phi$ ($|\Gamma| < \omega$). Since $\Gamma \not \vdash \phi$, we can find, by a similar argument to before, a prime saturated $\textsf{BQL}_\textsf{CD=}^r$-theory $\Gamma^* \supseteq \Gamma$ such that $\phi \not \in \Gamma^*$. The canonical model for $\Gamma \not \vdash \phi$ will now be a rooted model, with $\Gamma^*$ as the root and the worlds being just those prime saturated $\textsf{BQL}_\textsf{CD=}$-theories which $\Gamma^*$ has access to, where $\textsf{BQL}_\textsf{CD=}$ is the logic obtained by removing $\rightarrow$-Elim from the natural deduction system for $\textsf{BQL}_\textsf{CD=}^r$. Finally, we change the domain of the model from the set of closed terms to the set of equivalence classes of closed terms under the equivalence relation $\{\langle t_1, t_2 \rangle: t_1 = t_2 \in \Gamma^*\}$ and ``quotient-out" the intensions of relation symbols and extensions of constant symbols and function symbols in the standard way. Identity in this model is real because, by excluded middle for identity, $t_1 = t_2 \in \Sigma$ iff $t_1 = t_2 \in \Gamma^*$ for every prime saturated $\textsf{BQL}_\textsf{CD=}$-theory $\Sigma$ which $\Gamma^*$ has access to. 

\section{Bibliography}

[1] Bacon, A. (2013a). A New Conditional for Naive Truth Theory. \textit{Notre Dame Journal of Formal Logic}, 54(1), 87--104.

\noindent [2] Bacon, A. (2013b). Curry's Paradox and $\omega$-Inconsistency. \textit{Studia Logica}, 101, 1--9.

\noindent [3] Beall, J. (2009). \textit{Spandrels of Truth}. Oxford University Press.

\noindent [4] Brady, R. (2006). \textit{Universal Logic}. Center for the Study of Language and Information.

\noindent [5] Brady, R. (1984). Natural Deduction Systems for some Quantified Relevant Logics. \textit{Logique Et Analyse}, 27(8), 355--377.

\noindent [6] Field, H., Lederman H. \& \O gaard T. F. (2017). Prospects for a Naive Theory of Classes. \textit{Notre Dame Journal of Formal Logic}, 58(4), 461--506.

\noindent [7] Halbach, V. (2014). \textit{Axiomatic Theories of Truth}. Cambridge University Press.

\noindent [8] Kripke, S. (1975). Outline of a Theory of Truth. \textit{Journal of Philosophy}, 72(19), 690--716.

\noindent [9] Middleton, B. (2020). A Canonical Model for Constant Domain Basic First-Order Logic. \textit{Studia Logica}, 108, 1307--1323.

\noindent [10] Poizat, B. (2000). \textit{A Course in Model Theory: An Introduction to Contemporary Mathematical Logic}. Springer.  

\noindent [11] Priest, G. (2002). Paraconsistent Logic. In Gabbay D. M. \& Guenthner F. editors, \textit{Handbook of Philosophical Logic, 2nd Edition: Volume 6}, pp.\ 287--393. Kluwer Academic Publishers.

\noindent [12] Restall, G. (1994). Subintuitionistic Logics. \textit{Notre Dame Journal of Formal Logic}, 35(1), 116--129.

\noindent [13] Ruitenburg, W. (1998). Basic Predicate Calculus. \textit{Notre Dame Journal of Formal Logic}, 39(1), 18--46.

\noindent [14] Visser, A. (1981). A Propositional Logic with Explicit Fixed Points. \textit{Studia Logica}, 40, 155--175.

\appendix

\section{A standard model for \textsf{NT}}

Let $\mathcal{L}_\mathbb{N} = \mathcal{L}_T \setminus \{T\}$. An $\mathcal{L}_\mathbb{N}$-model $\mathfrak{M}$ is \textit{standard} iff (i) $dom(\mathfrak{M}) = \omega$, (ii) $|0| = 0$, (iii) $|f_e|$ is the primitive recursive function with index $e$ and (iv) for every $w \in \mathfrak{M}$: $|=|(w) = \{\langle n, n \rangle: n \in \omega\}$. A standard $\mathcal{L}_\mathbb{N}$-model therefore consists of copies of the classical standard model $\mathbb{N}$ connected by transitive arrows. For $\mathcal{L} \supseteq \mathcal{L}_\mathbb{N}$, an $\mathcal{L}$-model $\mathfrak{M}$ is standard iff the reduct of $\mathfrak{M}$ to $\mathcal{L}_\mathbb{N}$ is standard. We say that a set of $\mathcal{L}$-sentences $\Gamma$ \textit{has} a standard model iff there exists a standard $\mathcal{L}$-model $\mathfrak{M}$ such that $w \Vdash \Gamma$ for some reflexive $w \in \mathfrak{M}$. In this appendix, I build a standard $\mathcal{L}_T$-model $\mathbb{N}_T$ for \textsf{NT}. It follows that $\textsf{NT}$ is $\omega$-consistent in $\textsf{BQL}_\textsf{CD}^r$, in the sense that (i) $\textsf{NT} \vdash \phi(\dot{n})$ for every $n$ only if $\textsf{NT} \cup \forall v \phi \not \vdash \bot$ and (ii) $\textsf{NT} \vdash \exists v \phi$ only if $\textsf{NT} \cup \{\phi(\dot{n})\} \not \vdash \bot$ for some $n$. I build $\mathbb{N}_T$ using the positive Brady construction, a two-valued version of the Brady construction due to Field, Lederman and \O gaard (2017). The main difference between the construction as presented by Field-Lederman-\O gaard and the construction as presented here is that we add a loop at the end. But since Field-Lederman-\O gaard are working in the context of naive set theory, it is worth presenting the construction in full detail. The strategy is to start with a standard $\mathcal{L}_\mathbb{N}$-model consisting of a single dead-end copy of $\mathbb{N}$ and then, one by one, add irreflexive copies of $\mathbb{N}$ to a transfinitely descending chain below the dead-end copy. At each stage, we expand the model to $\mathcal{L}_T$ using a Kripke-like construction. As we descend down the chain, fewer conditionals are satisfied, which removes more counterexamples to modus ponens. Eventually, all counterexamples to modus ponens are removed. At this point, we can add a loop without disturbing the truth-value of any $\mathcal{L}_T$-sentence, which ensures $T$ keeps its intended extension. 

\subsection{The construction} Let $\{\mathfrak{M}_\alpha\}_{\alpha \in Ord}$ be a chain of standard $\mathcal{L}_\mathbb{N}$-models such that each $\mathfrak{M}_\alpha$ has the form:
$$
\begin{tikzpicture}[modal,node distance=1cm,world/.append style={minimum
size=1cm}]
\node[point] (root) [label=right: $w_0$] {};

\node[point] (w0) [below of=root, label=right: $w_1$] {};

\node[point] (w1) [below of=w0, label=right: $w_2$] {};

\node[point] (w2) [below of=w1, label=right: $w_{\alpha}$] {};

\path[->] (w0) edge (root);

\path[->] (w1) edge (w0);

\path[-] (w2) edge[dotted] (w1);
\end{tikzpicture}
$$
(arrows in diagrams are always transitive). We now define a chain $\{\mathfrak{M}_\alpha^T\}_{\alpha \in Ord}$ of $\mathcal{L}_T$-expansions of the $\mathfrak{M}_\alpha$ by induction on $\alpha$. Suppose $|T|(w_\beta)$ has already been defined for every $\beta < \alpha$.  For arbitrary $X \subseteq \omega$, let $\mathfrak{M}_\alpha^T[X]$ denote the object which \textit{would} be obtained were we to set $|T|(w_\alpha) = X$. $\mathfrak{M}_\alpha^T[X]$ is not necessarily an $\mathcal{L}_T$-model, since we need not have $X \subseteq |T|(w_\beta)$ for all $\beta < \alpha$. Nevertheless, we can still define satisfaction on $\mathfrak{M}^T_\alpha[X]$ in the same way as a real $\mathcal{L}_T$-model. Let $\Phi_\alpha(X) = \{\phi: \mathfrak{M}^T_\alpha[X], w_\alpha \Vdash \phi\}$. 

\begin{lemma}[Monotonicity] If $X \subseteq Y$ then $\Phi_\alpha(X) \subseteq \Phi_\alpha(Y)$.
\end{lemma}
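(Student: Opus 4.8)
The plan is to prove the statement directly by induction on the complexity of $\mathcal{L}_T$-formulas $\phi(\overline{a})$ (with parameters $\overline{a}$ drawn from the domain $\omega$), showing that $\mathfrak{M}^T_\alpha[X], w_\alpha \Vdash \phi(\overline{a})$ implies $\mathfrak{M}^T_\alpha[Y], w_\alpha \Vdash \phi(\overline{a})$ whenever $X \subseteq Y$; specializing to parameter-free $\phi$ then yields $\Phi_\alpha(X) \subseteq \Phi_\alpha(Y)$. Before the induction, I would isolate the single observation that carries the argument: for every $\beta < \alpha$ and every formula $\theta(\overline{a})$, the truth value of $\theta(\overline{a})$ at $w_\beta$ is the same in $\mathfrak{M}^T_\alpha[X]$ and $\mathfrak{M}^T_\alpha[Y]$. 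This holds because the $\prec$-successors of $w_\beta$ are precisely the worlds $w_\gamma$ with $\gamma < \beta$, none of which is $w_\alpha$, so a routine side induction on $\theta$ shows that satisfaction at $w_\beta$ only ever consults the interpretations $|T|(w_\gamma)$ for $\gamma < \alpha$ fixed at earlier stages, never the variable set $|T|(w_\alpha)$. In short, replacing $X$ by $Y$ can affect satisfaction only at $w_\alpha$ itself.

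With this in hand the base case is immediate: the only $T$-sensitive atoms are of the form $T(n)$, and $w_\alpha \Vdash T(n)$ in $\mathfrak{M}^T_\alpha[X]$ means $n \in X$, whence $n \in Y$ since $X \subseteq Y$, so $w_\alpha \Vdash T(n)$ in $\mathfrak{M}^T_\alpha[Y]$; every other atom (the arithmetic and identity atoms, together with $\top$ and $\bot$) has a $T$-independent truth value at $w_\alpha$. The cases for $\wedge$, $\vee$, $\forall$ and $\exists$ then follow straight from the induction hypothesis, using that the domain is $\omega$ so the quantifier clauses reduce to numeral instances $\phi(\overline{a}, b)$ of strictly lower complexity.

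The case that looks delicate, and which is the real crux, is $\rightarrow$. Here I would use that the $\prec$-successors of $w_\alpha$ are exactly the worlds $w_\beta$ with $\beta < \alpha$ (in particular $w_\alpha$ is not among them), so that $w_\alpha \Vdash (\phi \rightarrow \psi)(\overline{a})$ holds iff for every $\beta < \alpha$, $w_\beta \Vdash \phi(\overline{a})$ implies $w_\beta \Vdash \psi(\overline{a})$. By the preliminary observation each clause of this condition is independent of whether we use $X$ or $Y$, so the conditional has the \emph{very same} truth value in $\mathfrak{M}^T_\alpha[X]$ and $\mathfrak{M}^T_\alpha[Y]$, and monotonicity is trivially preserved.

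The main obstacle is precisely to recognize why monotonicity does not fail at this step. For intuitionistic-style conditionals evaluated at a \emph{reflexive} point, enlarging $X$ can make a new antecedent true at that point and thereby falsify a previously-true conditional, so monotonicity is not automatic. The argument genuinely relies on the fact that $w_\alpha$ sits at the bottom of the chain and is not $\prec$-related to itself, which guarantees that the truth value of any conditional at $w_\alpha$ is pinned down entirely by the worlds strictly above it, all of whose interpretations were fixed before stage $\alpha$. Making this dependence on the irreflexivity of $w_\alpha$ explicit is the only subtle point; the remainder is bookkeeping.
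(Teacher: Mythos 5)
Your proof is correct and follows essentially the same route as the paper: an induction on formula complexity whose only non-routine step is $\rightarrow$, which goes through because $w_\alpha$ is irreflexive, so every $\prec$-successor of $w_\alpha$ is some $w_\beta$ with $\beta<\alpha$ whose $T$-interpretation is already fixed independently of $X$ versus $Y$. The paper states this in one line; you have simply spelled out the details (correctly), including the preliminary observation that satisfaction at the worlds above $w_\alpha$ never consults $|T|(w_\alpha)$.
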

\begin{proof} Suppose $X \subseteq Y$. We show by induction on the construction of $\mathcal{L}_T$-formulas that $\mathfrak{M}_\alpha^T[X], w_\alpha \Vdash \phi(\overline{n})$ only if $\mathfrak{M}_\alpha^T[Y], w_\alpha \Vdash \phi(\overline{n})$.

\underline{Base Cases} The claim holds trivially for atomic $\phi \neq T(t)$. For $\phi = T(t)$ we have
\begin{align*}
\mathfrak{M}_\alpha^T[X], w_\alpha \Vdash T(t)(\overline{n}) &\implies |t|(\overline{n}) \in X \\
&\implies |t|(\overline{n}) \in Y \\
&\implies \mathfrak{M}_\alpha^T[Y], w_\alpha \Vdash T(t)(\overline{n}).
\end{align*}
\underline{Induction Steps} The induction steps are standard except for $\rightarrow$, which holds due to the fact that $w_\alpha$ is irreflexive.
\end{proof}

\noindent We can now inductively define a sequence of increasingly better extensions for $T$ at $w_\alpha$ in the style of Kripke (1975):
\begin{align*}
X_\alpha(0) &= \emptyset \\
X_\alpha(\beta + 1) &= \Phi_\alpha(X_\alpha(\beta)) \\
X_\alpha(\gamma) &= \bigcup_{\beta < \gamma} X_\alpha(\beta) \hspace{5mm} \text{for } \gamma \text{ a limit}.
\end{align*}

\begin{lemma}[Locally Increasing] If $\beta \leq \beta'$ then $X_\alpha(\beta) \subseteq X_\alpha(\beta')$.
\end{lemma}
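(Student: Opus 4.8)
The plan is to recognize $\{X_\alpha(\beta)\}_\beta$ as the iteration sequence of the operator $\Phi_\alpha$, which the Monotonicity lemma shows to be monotone, and then to invoke the standard fact that the iterates of a monotone operator form an increasing chain. I would carry this out by transfinite induction, with the previous lemma (Monotonicity) as the only external input.

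First I would isolate the one-step inclusion $X_\alpha(\beta) \subseteq X_\alpha(\beta+1)$ as the crux, since the full statement follows from it by a short secondary induction on $\beta'$. When $\beta'$ is a successor $\gamma+1$, I would combine the inductive inclusion $X_\alpha(\beta) \subseteq X_\alpha(\gamma)$ (for $\beta \le \gamma$) with the one-step inclusion $X_\alpha(\gamma) \subseteq X_\alpha(\gamma+1)$; when $\beta'$ is a limit, the inclusion $X_\alpha(\beta) \subseteq X_\alpha(\beta')$ is immediate from the definition $X_\alpha(\beta') = \bigcup_{\delta < \beta'} X_\alpha(\delta)$.

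Then I would prove the one-step inclusion by transfinite induction on $\beta$. The base case $\beta = 0$ is trivial because $X_\alpha(0) = \emptyset$. For a successor $\beta = \eta+1$, the inductive hypothesis $X_\alpha(\eta) \subseteq X_\alpha(\eta+1)$ together with Monotonicity gives $\Phi_\alpha(X_\alpha(\eta)) \subseteq \Phi_\alpha(X_\alpha(\eta+1))$, which is exactly $X_\alpha(\eta+1) \subseteq X_\alpha(\eta+2)$, the desired one-step inclusion at $\beta$.

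The main obstacle is the limit case $\beta = \gamma$, where I must show $X_\alpha(\gamma) \subseteq \Phi_\alpha(X_\alpha(\gamma)) = X_\alpha(\gamma+1)$. Here I would argue pointwise through the union: for each $\delta < \gamma$ the inductive hypothesis gives $X_\alpha(\delta) \subseteq X_\alpha(\delta+1) = \Phi_\alpha(X_\alpha(\delta))$, while $X_\alpha(\delta) \subseteq X_\alpha(\gamma)$ holds by the definition of the union, so Monotonicity yields $\Phi_\alpha(X_\alpha(\delta)) \subseteq \Phi_\alpha(X_\alpha(\gamma)) = X_\alpha(\gamma+1)$; chaining these gives $X_\alpha(\delta) \subseteq X_\alpha(\gamma+1)$, and taking the union over all $\delta < \gamma$ yields $X_\alpha(\gamma) \subseteq X_\alpha(\gamma+1)$. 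The only delicate point is threading Monotonicity through the union in the correct order, observing that the two facts $X_\alpha(\delta) \subseteq X_\alpha(\delta+1)$ and $X_\alpha(\delta) \subseteq X_\alpha(\gamma)$ have to be applied on opposite sides of the operator; everything else is routine ordinal bookkeeping.
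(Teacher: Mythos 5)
Your proof is correct, and it uses the same two ingredients as the paper's (transfinite induction plus Monotonicity), but it organizes them differently. The paper proves the two-variable statement directly by a single induction on the \emph{lower} index $\beta$: at a successor $\beta+1 \leq \beta'$ it splits on whether $\beta'$ is a successor (apply Monotonicity to the inductive inclusion $X_\alpha(\beta) \subseteq X_\alpha(\beta'-1)$) or a limit (immediate from the union), and the limit case for $\beta$ is trivial because any $n \in X_\alpha(\beta)$ already lies in some $X_\alpha(\beta_0)$ with $\beta_0 < \beta$. You instead factor the lemma through the one-step inclusion $X_\alpha(\beta) \subseteq X_\alpha(\beta+1)$ and then chain; this shifts all the work into the limit case of that single-variable induction, where you need the interleaving $X_\alpha(\delta) \subseteq \Phi_\alpha(X_\alpha(\delta)) \subseteq \Phi_\alpha(X_\alpha(\gamma))$ that you correctly identify as the delicate point. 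Neither route buys anything substantive over the other: yours is the textbook ``iterates of a monotone operator form a chain'' argument and isolates a reusable one-step fact, while the paper's choice of induction variable makes every limit case trivial at the cost of carrying the second parameter $\beta'$ through the induction. Both are complete as stated.
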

\begin{proof} By induction on $\beta$. The base case $\beta = 0$ holds trivially.

\underline{Successor Step} Suppose $\beta + 1 \leq \beta'$.  There are two cases.

\underline{Case 1} $\beta'$ is a successor. Then we have
\begin{align*}
\beta \leq \beta' - 1 &\implies X_\alpha(\beta) \subseteq X_\alpha(\beta' - 1) &&(\text{induction hypothesis}) \\
&\implies \Phi_\alpha(X_\alpha(\beta)) \subseteq \Phi_\alpha(X_\alpha(\beta' - 1)) &&(\text{Monotonicity}) \\
&\implies X_\alpha(\beta + 1) \subseteq X_\alpha(\beta').
\end{align*}

\underline{Case 2} $\beta'$ is a limit. Then, trivially, $X_\alpha(\beta + 1) \subseteq X_\alpha(\beta')$.

\underline{Limit Step} Suppose $\beta \leq \beta'$ for $\beta$ a limit. Suppose $n \in X_\alpha(\beta)$.  Then $n \in X_\alpha(\beta_0)$ for some $\beta_0 < \beta$.  By the induction hypothesis, $X_\alpha(\beta_0) \subseteq X_\alpha(\beta')$.  So $n \in X_\alpha(\beta')$. 
\end{proof}

\begin{lemma}[Locally Convergent] There exists $\beta$ such that $X_\alpha(\beta) = X_\alpha(\beta')$ for all $\beta' \geq \beta$.
\end{lemma}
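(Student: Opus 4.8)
The plan is a cardinality argument. The sequence $\langle X_\alpha(\beta)\rangle_\beta$ is, by Locally Increasing, a weakly increasing chain of subsets of the \emph{countable} set $\omega$, and such a chain cannot increase strictly uncountably often; hence it must eventually be constant. First I would record the immediate consequence of Locally Increasing that whenever $\beta < \beta'$ and $X_\alpha(\beta) \neq X_\alpha(\beta')$, the inclusion $X_\alpha(\beta) \subseteq X_\alpha(\beta')$ is in fact proper.

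Suppose for a reductio that the sequence never stabilizes, i.e.\ for every $\beta$ there is some $\beta' > \beta$ with $X_\alpha(\beta) \subsetneq X_\alpha(\beta')$. Then I would build, by recursion on $\xi < \omega_1$, a strictly increasing sequence of ordinals $\langle \beta_\xi \rangle_{\xi < \omega_1}$: set $\beta_0 = 0$; given $\beta_\xi$, let $\beta_{\xi + 1}$ be the least ordinal $> \beta_\xi$ with $X_\alpha(\beta_\xi) \subsetneq X_\alpha(\beta_{\xi + 1})$ (which exists by the reductio hypothesis); and at limits $\lambda$ set $\beta_\lambda = \sup_{\xi < \lambda} \beta_\xi$. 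By Locally Increasing, $\langle X_\alpha(\beta_\xi)\rangle_{\xi < \omega_1}$ is then strictly $\subseteq$-increasing.

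For each $\xi < \omega_1$ choose a witness $n_\xi \in X_\alpha(\beta_{\xi + 1}) \setminus X_\alpha(\beta_\xi)$. If $\xi < \xi'$, then $\beta_{\xi + 1} \leq \beta_{\xi'}$, so Locally Increasing gives $n_\xi \in X_\alpha(\beta_{\xi'})$, whereas $n_{\xi'} \notin X_\alpha(\beta_{\xi'})$; hence $n_\xi \neq n_{\xi'}$. Thus $\xi \mapsto n_\xi$ injects $\omega_1$ into $\omega$, contradicting the countability of $\omega$. Therefore the sequence stabilizes at some stage $\beta < \omega_1$, which is the desired $\beta$.

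The argument is essentially routine, so I do not anticipate a serious obstacle; the only point requiring care is the transfinite bookkeeping, in particular the verification that the witnesses $n_\xi$ are pairwise distinct, which is precisely where monotonicity (Locally Increasing) is invoked. One could alternatively phrase the conclusion as the statement that the least fixed point of the monotone operator $\Phi_\alpha$ on $\mathcal{P}(\omega)$ is reached by some stage below $\omega_1$, in the familiar style of Kripke's (1975) construction.
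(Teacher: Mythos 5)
Your proof is correct and takes essentially the same approach as the paper: a reductio assuming the monotone chain $\langle X_\alpha(\beta)\rangle_\beta$ never stabilizes, contradicted by the fact that a strictly increasing ordinal-indexed chain of subsets of a fixed set cannot go on forever. The only difference is that you instantiate the cardinality bound concretely (injecting $\omega_1$ into $\omega$, hence stabilization below $\omega_1$), whereas the paper's one-line argument appeals to the fact that $\bigcup_{\beta \in Ord} X_\alpha(\beta)$ is a set; your sharper bound is the one the paper itself invokes later when observing $\Theta < \omega_1$.
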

\begin{proof} Suppose not.  Then, by Locally Increasing, for every $\beta$ there exists $\beta' > \beta$ such that $X_\alpha(\beta) \subset X_\alpha(\beta')$, which contradicts the fact that $\bigcup_{\beta \in Ord} X_\alpha(\beta)$ is a set.
\end{proof}

\noindent We now define $|T|(w_\alpha) = X_\alpha(\alpha^+)$, where $\alpha^+$ is the least $\beta$ such that $X_\alpha(\beta) = X_\alpha(\beta')$ for all $\beta' \geq \beta$.  This completes the definition of $\mathfrak{M}_\alpha^T$. 

\begin{lemma}[Closure] $w_\alpha \Vdash \phi$ iff $\phi \in |T|(w_\alpha)$.
\end{lemma}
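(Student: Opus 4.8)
The plan is to exploit the fact that $|T|(w_\alpha)$ is, by construction, a fixed point of the operator $\Phi_\alpha$, so that satisfaction at $w_\alpha$ in the completed model $\mathfrak{M}_\alpha^T$ reads off exactly the extension assigned to $T$ there. Once this fixed-point property is in hand, the lemma reduces to unpacking the definition of $\alpha^+$ together with the definition of $\Phi_\alpha$, and the identification of $\phi$ with its G\"odel code (so that membership $\phi \in |T|(w_\alpha) \subseteq \omega$ is meaningful).

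First I would note that, by Locally Convergent, $\alpha^+$ is well-defined, and by the minimality clause in its definition we have $X_\alpha(\alpha^+) = X_\alpha(\alpha^+ + 1)$. Since $X_\alpha(\alpha^+ + 1) = \Phi_\alpha(X_\alpha(\alpha^+))$ by the successor clause of the defining recursion for $X_\alpha$, this shows that $\Phi_\alpha(X_\alpha(\alpha^+)) = X_\alpha(\alpha^+)$; that is, $X_\alpha(\alpha^+)$ is a fixed point of $\Phi_\alpha$. Next I would observe that, because we set $|T|(w_\alpha) = X_\alpha(\alpha^+)$, the hypothetical object $\mathfrak{M}_\alpha^T[X_\alpha(\alpha^+)]$ obtained by stipulating $|T|(w_\alpha) = X_\alpha(\alpha^+)$ is literally the completed model $\mathfrak{M}_\alpha^T$, so the two carry the same satisfaction relation at $w_\alpha$.

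Combining these observations with the definition $\Phi_\alpha(X) = \{\phi : \mathfrak{M}_\alpha^T[X], w_\alpha \Vdash \phi\}$ yields the chain of equivalences
\begin{align*}
w_\alpha \Vdash \phi &\iff \mathfrak{M}_\alpha^T[X_\alpha(\alpha^+)], w_\alpha \Vdash \phi \\
&\iff \phi \in \Phi_\alpha(X_\alpha(\alpha^+)) \\
&\iff \phi \in X_\alpha(\alpha^+) = |T|(w_\alpha),
\end{align*}
where the last step uses the fixed-point equation. This is exactly the desired biconditional.

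I do not expect any real obstacle: the entire content of the lemma is carried by the fixed-point property secured through Monotonicity, Locally Increasing and Locally Convergent, and the argument is just the familiar reasoning behind Kripke's (1975) least-fixed-point construction. The only point demanding a moment's care is the identification $\mathfrak{M}_\alpha^T = \mathfrak{M}_\alpha^T[X_\alpha(\alpha^+)]$, which holds precisely because $X_\alpha(\alpha^+)$ was chosen as the value of $|T|(w_\alpha)$, so that evaluating satisfaction in the hypothetical object $\mathfrak{M}_\alpha^T[X_\alpha(\alpha^+)]$ coincides with evaluating it in the genuine model.
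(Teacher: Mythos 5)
Your proof is correct and follows essentially the same route as the paper's: both arguments identify $\mathfrak{M}_\alpha^T$ with $\mathfrak{M}_\alpha^T[X_\alpha(\alpha^+)]$ and then run the chain $w_\alpha \Vdash \phi \iff \phi \in \Phi_\alpha(X_\alpha(\alpha^+)) = X_\alpha(\alpha^+ + 1) = X_\alpha(\alpha^+) = |T|(w_\alpha)$, using the stabilization of the sequence at $\alpha^+$. The only cosmetic quibble is that the equation $X_\alpha(\alpha^+) = X_\alpha(\alpha^+ + 1)$ comes from the stabilization property defining $\alpha^+$ (that $X_\alpha(\alpha^+) = X_\alpha(\beta')$ for all $\beta' \geq \alpha^+$), not from its minimality.
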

\begin{proof} We have
\begin{align*}
w_\alpha \Vdash \phi &\iff \phi \in \Phi_\alpha(|T|(w_\alpha)) \\
&\iff \phi \in \Phi_\alpha(X_\alpha(\alpha^+)) \\
&\iff \phi \in X_\alpha(\alpha^+ + 1) \\
&\iff \phi \in X_\alpha(\alpha^+) \\
&\iff \phi \in |T|(w_\alpha).
\end{align*}
\end{proof}

\begin{lemma}[Globally Decreasing] If $\alpha \leq \beta$ then $|T|(w_\beta) \subseteq |T|(w_\alpha)$.
\end{lemma}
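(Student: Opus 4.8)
The plan is to read the lemma as the statement that the construction respects the persistence constraint: since $w_\beta \prec w_\alpha$ exactly when $\alpha < \beta$, the inclusion $|T|(w_\beta)\subseteq|T|(w_\alpha)$ is just persistence for $T$ running from the lower world $w_\beta$ up to the higher world $w_\alpha$. I would prove it by a nested induction: an outer induction on $\beta$, and within the step for $\beta$ an inner induction on the stage ordinal $\gamma$ showing that $X_\beta(\gamma)\subseteq|T|(w_\alpha)$ holds \emph{simultaneously for every} $\alpha<\beta$. Keeping the inner claim uniform in $\alpha$ is what makes the induction close. Since $|T|(w_\beta)=X_\beta(\beta^+)$ and the case $\alpha=\beta$ is trivial, the lemma for $\beta$ follows once the inner claim is established.

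The base case $\gamma=0$ is immediate ($X_\beta(0)=\emptyset$) and the limit case is a union of subsets, so everything happens at a successor stage $\gamma=\delta+1$, where $X_\beta(\delta+1)=\Phi_\beta(X_\beta(\delta))$. Here I would first certify that $\mathfrak{M}_\beta^T[X_\beta(\delta)]$ is a genuine $\mathcal{L}_T$-model, i.e.\ that it meets the persistence constraint for every accessible pair $w_{\gamma_1}\prec w_{\gamma_2}$: for pairs with both indices below $\beta$ this is the outer induction hypothesis, and for the pairs $w_\beta\prec w_{\gamma_2}$ (with $\gamma_2<\beta$) it is exactly the inner hypothesis $X_\beta(\delta)\subseteq|T|(w_{\gamma_2})$ at stage $\delta$. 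Given this, Persistence (Theorem 1) applies to $\mathfrak{M}_\beta^T[X_\beta(\delta)]$, so any $\phi$ with $w_\beta\Vdash\phi$ also satisfies $w_\alpha\Vdash\phi$ for each $\alpha<\beta$.

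Finally I would transfer this back to $\mathfrak{M}_\alpha^T$ and apply Closure. The key observation is that $w_\alpha$ does not see $w_\beta$ (its accessible worlds are the $w_\gamma$ with $\gamma<\alpha$), so the set $\{w_\gamma:\gamma\le\alpha\}$ is closed under accessibility and carries exactly the $T$-extensions it has in $\mathfrak{M}_\alpha^T$; hence $\mathfrak{M}_\alpha^T$ is the submodel of $\mathfrak{M}_\beta^T[X_\beta(\delta)]$ generated by $w_\alpha$, and truth at $w_\alpha$ is insensitive to the value of $T$ at $w_\beta$. Thus $w_\alpha\Vdash\phi$ in $\mathfrak{M}_\alpha^T$, and Closure gives $\phi\in|T|(w_\alpha)$. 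Letting $\phi$ range over $\Phi_\beta(X_\beta(\delta))$ yields $X_\beta(\delta+1)\subseteq|T|(w_\alpha)$ for every $\alpha<\beta$, completing the successor step. I expect the main obstacle to be exactly this certification of the persistence constraint for $\mathfrak{M}_\beta^T[X_\beta(\delta)]$: it is what couples the two inductions (one cannot invoke Persistence at $w_\beta$ without already knowing $X_\beta(\delta)$ sits inside every higher $|T|(w_{\gamma_2})$), and it is why the inner statement must be phrased uniformly over all $\alpha<\beta$ rather than for a single fixed $\alpha$.
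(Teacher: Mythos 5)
Your proof is correct, but it takes a genuinely different route from the paper's. The paper proves the auxiliary Subclaim that $X_\beta(\xi) \subseteq X_\alpha(\xi)$ for \emph{every} stage $\xi$, i.e.\ it runs the two Kripke-jump iterations at $w_\beta$ and $w_\alpha$ in lockstep; the successor step is handled by a direct induction on formula complexity (``a similar argument to Monotonicity''), using the observation that every conditional satisfied at $w_\beta$ is satisfied at $w_\alpha$ because $w_\beta$ sees every world $w_\alpha$ sees. It then finishes with a case split on whether $\beta^+ \geq \alpha^+$ or $\alpha^+ \geq \beta^+$, exploiting the fact that both sequences have stabilized at their respective closure ordinals. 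Crucially, the paper's argument never needs the objects $\mathfrak{M}^T_\gamma[X]$ to be genuine $\mathcal{L}_T$-models --- satisfaction is defined on them regardless --- so Theorem 1 is never invoked, and indeed the paper derives the persistence constraint for $T$ only as a \emph{consequence} of the lemma. You instead compare $X_\beta(\gamma)$ directly against the already-completed fixed point $|T|(w_\alpha)$, which lets you delegate the formula induction to Theorem 1 and the fixed-point bookkeeping to Closure, and dispenses with the $\alpha^+$ versus $\beta^+$ case analysis. The price is exactly the coupling you identify: you must certify mid-induction that $\mathfrak{M}_\beta^T[X_\beta(\delta)]$ satisfies the persistence constraint (outer hypothesis for pairs below $w_\beta$, inner hypothesis for pairs involving $w_\beta$), and you need the additional (easy, but unstated in the paper) generated-submodel fact that truth at $w_\alpha$ is insensitive to the interpretation of $T$ at $w_\beta$. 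Both arguments are sound; yours reuses more of the paper's general machinery, while the paper's is more self-contained within the appendix and makes the stage-by-stage containment $X_\beta(\xi) \subseteq X_\alpha(\xi)$ explicit, which is slightly more informative than the containment of $X_\beta(\gamma)$ in the final fixed point alone.
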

\begin{proof} Suppose $\alpha \leq \beta$.  

\begin{subclaim} For all $\xi: X_\beta(\xi) \subseteq X_\alpha(\xi)$. 
\end{subclaim}
\begin{subproof}
By induction on $\xi$. The base case $\xi = 0$ holds trivially.

\underline{Successor Step} Suppose $X_\beta(\xi) \subseteq X_\alpha(\xi)$. Since $\alpha \leq \beta$, every conditional satisfied at $w_\beta$ is also satisfied at $w_\alpha$. So, by a similar argument to Monotonicity: $\mathfrak{M}_\beta^T[X_\beta(\xi)], w_\beta \Vdash \phi(\overline{n})$ only if $\mathfrak{M}_\alpha^T[X_\alpha(\xi)], w_\alpha \Vdash \phi(\overline{n})$. Therefore $X_\beta(\xi + 1) \subseteq X_\alpha(\xi + 1)$.

\underline{Limit Step} Let $n \in X_\beta(\gamma)$ for $\gamma$ a limit.  Then $n \in X_\beta(\xi)$ for some $\xi < \gamma$.  By the induction hypothesis, $X_\beta(\xi) \subseteq X_\alpha(\xi)$.  So $n \in X_\alpha(\xi) \subseteq X_\alpha(\gamma)$.  
\end{subproof}

\noindent There are now two cases to consider.

\underline{Case 1} $\beta^+ = \alpha^+ + \xi$ for some ordinal $\xi$.  Then 
\begin{align*}
|T|(w_\beta) &= X_\beta(\beta^+) \\
&= X_\beta(\alpha^+ + \xi) \\
&\subseteq X_\alpha(\alpha^+ + \xi) &&(\text{Subclaim 1}) \\
&= X_\alpha(\alpha^+) \\
&= |T|(w_\alpha).
\end{align*}

\underline{Case 2} $\alpha^+ = \beta^+ + \xi$ for some ordinal $\xi$. Then
\begin{align*}
|T|(w_\beta) &= X_\beta(\beta^+) \\
&= X_\beta(\beta^+ + \xi) \\
&= X_\beta(\alpha^+) \\
&\subseteq X_\alpha(\alpha^+) &&(\text{Subclaim 1}) \\
&= |T|(w_\alpha).
\end{align*}
\end{proof}

\noindent It follows from Globally Decreasing that $\mathfrak{M}_\alpha^T$ is in fact an $\mathcal{L}_T$-model. Accordingly, $\mathfrak{M}_\alpha^T$ satisfies Persistence, which allows us to prove that we eventually reach an ordinal $\alpha$ such that for all $\beta \geq \alpha$: $w_\beta \Vdash \phi(\overline{n})$ iff $w_\alpha \Vdash \phi(\overline{n})$. Let $S(\alpha) = \{\langle \phi(\overline{v}), \langle \overline{v} \rangle, \langle \overline{n} \rangle \rangle: w_\alpha \Vdash \phi(\overline{n})\}$.

\begin{lemma}[Globally Convergent]  There exists $\alpha$ such that for all $\beta \geq \alpha: S(\beta) = S(\alpha)$.
\end{lemma}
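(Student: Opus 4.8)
The plan is to show that $\alpha \mapsto S(\alpha)$ is $\subseteq$-decreasing along the chain and then close with a set-size argument, exactly paralleling Globally Decreasing and Locally Convergent. The one genuinely new ingredient is the monotonicity of $S$, for which I would lean on Persistence. As a preliminary I would record that $S(\alpha)$ does not depend on the ambient model: since $w_\alpha$ is irreflexive, the truth value of any $\phi(\overline{n})$ at $w_\alpha$ is determined entirely by the worlds $\{w_\xi : \xi < \alpha\}$ that $w_\alpha$ accesses, together with the (fixed) local data at $w_\alpha$ itself, all of which agree in every $\mathfrak{M}_\beta^T$ with $\beta \geq \alpha$. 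Consequently, for $\alpha \leq \beta$ I may compute the truth values at both $w_\alpha$ and $w_\beta$ inside the single model $\mathfrak{M}_\beta^T$, in which $w_\beta \prec w_\alpha$.

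The key step is then that $S(\beta) \subseteq S(\alpha)$ whenever $\alpha \leq \beta$. Working in $\mathfrak{M}_\beta^T$ (a genuine $\mathcal{L}_T$-model, by Globally Decreasing, so that Persistence is available) and applying Persistence to $w_\beta \prec w_\alpha$ gives $w_\beta \Vdash \phi(\overline{n}) \implies w_\alpha \Vdash \phi(\overline{n})$ for every $\phi(\overline{n})$; hence $S(\beta) \subseteq S(\alpha)$. It is instructive to note that the conditional clause is precisely what forces the irreflexivity observation: the strict successors of $w_\alpha$ form a subset of those of $w_\beta$, so no new potential counterexample to $\phi \rightarrow \psi$ appears as we ascend from $w_\beta$ to $w_\alpha$.

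Finally, for the stabilization I would argue formula-instance by formula-instance. For each $x = \langle \phi(\overline{v}), \langle \overline{v} \rangle, \langle \overline{n} \rangle \rangle$, the decreasing property makes the class $\{\alpha : x \in S(\alpha)\}$ downward closed, hence either all of $Ord$ or an initial segment; in the latter case let $\gamma_x$ be its least non-member, i.e.\ the least $\alpha$ with $x \not\in S(\alpha)$. The defined $\gamma_x$ form the image of a subset of the set of all formula-instances under a function, hence a set of ordinals, bounded by some $\delta$. For all $\beta \geq \alpha \geq \delta$ and every $x$ we then have $x \in S(\alpha) \iff x \in S(\beta)$ (both hold iff $x$ lies in every $S(\xi)$), so $S(\alpha) = S(\beta)$, establishing the lemma with stabilization point $\delta$. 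Alternatively one can mimic Locally Convergent directly: were $S$ never eventually constant, the decreasing property would yield a strictly $\subseteq$-decreasing chain indexed by a proper class of ordinals, contradicting that its values all lie in the set of subsets of the set of formula-instances. I expect the main obstacle to be the monotonicity step, and within it the justification that $S(\alpha)$ is model-independent so that Persistence can legitimately be applied in a common model; the closing set-theoretic argument is routine.
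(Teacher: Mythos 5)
Your proposal is correct and follows essentially the same route as the paper: Persistence (available because Globally Decreasing makes each $\mathfrak{M}_\beta^T$ a genuine $\mathcal{L}_T$-model) yields $S(\beta) \subseteq S(\alpha)$ for $\alpha \leq \beta$, and a set-size argument on the decreasing chain forces stabilization, which is exactly the paper's one-line reductio. Your explicit justification that $S(\alpha)$ is model-independent, and your per-instance Replacement argument for the bound $\delta$, are slightly more detailed than the paper's treatment but not substantively different.
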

\begin{proof} Suppose not.  Then, by Persistence, for every $\alpha$ there exists $\beta > \alpha$ such that $S(\beta) \subset S(\alpha)$, which contradicts the fact that $S(0)$ is a set.
\end{proof}

\noindent Let $\Theta$ denote the least $\alpha$ such that $S(\alpha) = S(\beta)$ for all $\beta \geq \alpha$. We define $\mathbb{N}_T$ to be the $\mathcal{L}_T$-model obtained from $\mathfrak{M}_\Theta^T$ by adding a loop at $\Theta$:
$$
\begin{tikzpicture}[modal,node distance=1cm,world/.append style={minimum
size=1cm}]
\node[point] (root) [label=right: $w_0$] {};

\node[point] (w0) [below of=root, label=right: $w_1$] {};

\node[point] (w1) [below of=w0, label=right: $w_2$] {};

\node[point] (w2) [below of=w1, label=right: $w_\Theta$] {};

\path[->] (w0) edge (root);

\path[->] (w1) edge (w0);

\path[-] (w2) edge[dotted] (w1);

\path[->] (w2) edge[reflexive below] (w2);
\end{tikzpicture}
$$

\noindent Note that for all $n < \omega$, $w_n \Vdash \Box^{n + 1} \bot$ but $w_{n + 1} \not \Vdash \Box^{n + 1} \bot$. Thus, $\Theta \geq \omega$. In fact, if we define $\Box^{<\omega} \phi = \exists xT$\d{$\Box$}$^x\ulcorner \phi \urcorner$ then we have $w_{\omega + n} \Vdash \Box^{n + 1}\Box^{<\omega}\bot$ but $w_{\omega + n + 1} \not \Vdash \Box^{n + 1}\Box^{<\omega} \bot$.\begin{footnote}{We follow the notational conventions of Halbach (2014), where the result of placing a dot beneath a sentential operator $O$ abbreviates the function symbol for the p.r.\ function corresponding to $O$.}\end{footnote}  Next we can define $\Box^{<\omega 2} \phi = \Box^{<\omega} \Box^{<\omega} \phi$ and so on up to $\omega^2$, where we can define $\Box^{<\omega^2}\phi = \exists x T($\d{$\Box^{<\omega}$}$)^x \ulcorner \phi \urcorner$ and keep going. So $\Theta$ will be located a decent way out into the ordinals, although $\Theta < \omega_1$ since $S(0)$ is countable.\begin{footnote}{$\Theta$ can equivalently be characterized as the least $\alpha$ such that $S(\alpha) = S(\alpha + 1)$.}\end{footnote} 

\begin{lemma}[Standard Model] $\mathbb{N}_T, w_\Theta \Vdash \textsf{NT}$.
\end{lemma}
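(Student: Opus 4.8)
The plan is to isolate the one genuinely semantic fact that the construction was engineered to deliver --- that adding the loop at $w_\Theta$ disturbs nothing --- and then to read off every axiom of $\textsf{NT}$ from standardness, real identity, the Closure lemma and the special form of the induction schema. Throughout I would write $\mathfrak{M}_\Theta^T, w\Vdash\phi$ and $\mathbb{N}_T, w\Vdash\phi$ to distinguish satisfaction before and after the loop is added.

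\textbf{Step 1 (loop-invariance).} First I would show, by induction on $\phi$, that $\mathbb{N}_T, w\Vdash\phi(\overline n)$ iff $\mathfrak{M}_\Theta^T, w\Vdash\phi(\overline n)$ for every world $w$ of $\mathbb{N}_T$. For $w=w_\beta$ with $\beta<\Theta$ this is immediate, since going upward from $w_\beta$ the loop is unreachable, so the two models have the same accessible worlds there. All connectives and quantifiers are handled by the induction hypothesis, since they are evaluated locally; the only case requiring the convergence of the construction is a conditional $\psi\rightarrow\chi$ at $w_\Theta$. Here the key observation is that the set of worlds accessible from $w_\Theta$ in $\mathbb{N}_T$, namely $\{w_\beta:\beta<\Theta\}\cup\{w_\Theta\}$, is exactly the set accessible from $w_{\Theta+1}$ in $\mathfrak{M}_{\Theta+1}^T$; and by the induction hypothesis, together with the fact that adding a world below $w_\Theta$ does not alter $w_\Theta$'s theory (as $w_\Theta \not\prec w_{\Theta+1}$), the subformulas $\psi$ and $\chi$ receive the same truth values on these worlds in both models. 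Hence $\mathbb{N}_T, w_\Theta\Vdash(\psi\rightarrow\chi)$ iff $\mathfrak{M}_{\Theta+1}^T, w_{\Theta+1}\Vdash(\psi\rightarrow\chi)$. By Globally Convergent and the definition of $\Theta$ we have $S(\Theta)=S(\Theta+1)$, so $w_\Theta$ and $w_{\Theta+1}$ satisfy the same sentences; combined with the invariance of $w_\Theta$'s theory under passage from $\mathfrak{M}_\Theta^T$ to $\mathfrak{M}_{\Theta+1}^T$, this gives $\mathfrak{M}_{\Theta+1}^T, w_{\Theta+1}\Vdash(\psi\rightarrow\chi)$ iff $\mathfrak{M}_\Theta^T, w_\Theta\Vdash(\psi\rightarrow\chi)$, closing the induction.

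\textbf{Step 2 (the axioms).} With Step 1 in hand, the Closure lemma transfers to $\mathbb{N}_T$: $\mathbb{N}_T, w_\Theta\Vdash\phi$ iff $\phi\in|T|(w_\Theta)$, and likewise at each $w_\beta$. This yields \textsf{TB} at once, for any $u\succ w_\Theta$ we have $u\Vdash T\ulcorner\phi\urcorner$ iff the code of $\phi$ lies in $|T|(u)$ iff $u\Vdash\phi$ by Closure, so both $T\ulcorner\phi\urcorner\rightarrow\phi$ and $\phi\rightarrow T\ulcorner\phi\urcorner$ hold at $w_\Theta$. The axioms A1--A3, A5, A6, Ie and De are built from atomic $\mathcal{L}_\mathbb{N}$-formulas, whose truth values are world-independent because $|=|(u)$ is real identity and the function symbols are interpreted standardly at every $u$; hence each conditional among them collapses to classical material implication and each axiom reduces to a truth of the classical standard model $\mathbb{N}$. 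Axiom A4 additionally involves $T$ but holds for the same reason: if $m=k$ then $T(m)$ and $T(k)$ have the same truth value at every world, and if $m\neq k$ the antecedent fails everywhere. Finally I would verify Ind directly from its conditional form: fixing parameters $\overline a$ and a world $u\succ w_\Theta$ with $u\Vdash\forall y(\phi(y,\overline a)\rightarrow\phi(s(y),\overline a))$, any $z\succ u$ with $z\Vdash\phi(0,\overline a)$ satisfies $z\Vdash\phi(m,\overline a)\Rightarrow z\Vdash\phi(m+1,\overline a)$ for every $m$ (this is the premise read off at $z$), so a metalevel induction on $m$ gives $z\Vdash\phi(m,\overline a)$ for all $m$, i.e. $z\Vdash\forall y\phi(y,\overline a)$; hence $u\Vdash\phi(0,\overline a)\rightarrow\forall y\phi(y,\overline a)$, and the instance holds at $w_\Theta$. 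This is precisely where the non-standard shape of Ind matters: the usual formulation bundles base case and step into a single antecedent, which the Kripke clause for $\rightarrow$ will not let us discharge world-by-world.

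The main obstacle is Step 1, and within it the conditional case. Everything else is either bookkeeping against standardness and real identity or the short metalevel induction for Ind; what carries real content is the claim that the now-reflexive $w_\Theta$ introduces no counterexample to modus ponens --- that whenever $\psi\rightarrow\chi$ holds at $w_\Theta$ in $\mathfrak{M}_\Theta^T$ it continues to hold once $w_\Theta$ must also check itself. I expect the cleanest route to be the identification of the up-set of $w_\Theta$ in $\mathbb{N}_T$ with the up-set of $w_{\Theta+1}$ in $\mathfrak{M}_{\Theta+1}^T$, so that the convergence fact $S(\Theta)=S(\Theta+1)$ does the remaining work.
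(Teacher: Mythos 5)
Your proposal is correct and follows essentially the same route as the paper: both reduce the Tarski biconditionals via Closure to the loop-invariance claim that $\mathfrak{M}_\Theta^T, w_\Theta \Vdash \phi(\overline{n})$ iff $\mathbb{N}_T, w_\Theta \Vdash \phi(\overline{n})$, proved by induction on formulas with the conditional case discharged by the convergence fact $S(\Theta) = S(\Theta + 1)$. The only differences are presentational: you argue the conditional case as a direct biconditional by identifying the up-set of $w_\Theta$ in $\mathbb{N}_T$ with that of $w_{\Theta+1}$ in $\mathfrak{M}_{\Theta+1}^T$, whereas the paper argues the non-trivial direction by contraposition through $w_{\Theta+1}$, and you spell out the remaining axioms (including the metalevel induction for Ind), which the paper dismisses as trivial.
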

\begin{proof} The only non-trivial axioms are the Tarski biconditionals.\begin{footnote}{Note, however, that although $(\mathbb{N}_T, w_\Theta)$ validates the induction schema in \textsf{NT}, $(\mathbb{N}_T, w_\Theta)$ does not validate the more usual formulation $\forall \overline{x}[\phi(0, \overline{x}) \wedge \forall y(\phi(y, \overline{x}) \rightarrow \phi(s(y), \overline{x})) \rightarrow \forall y\phi(y, \overline{x})]$.}\end{footnote} We need to verify that $\phi \in |T|(w_\Theta)$ iff $\mathbb{N}_T, w_\Theta \Vdash \phi$. By Closure, it suffices to show that $\mathfrak{M}_\Theta^T, w_\Theta \Vdash \phi(\overline{n})$ iff $\mathbb{N}_T, w_\Theta \Vdash \phi(\overline{n})$, which we prove by induction on the construction of $\mathcal{L}_T$-formulas. The base case is easy. The induction steps are also easy except for $\rightarrow$.

\underline{$\rightarrow$} $\impliedby$ This direction is trivial.

$\implies$ Suppose $\mathbb{N}_T, w_\Theta \not \Vdash (\phi \rightarrow \psi)(\overline{n})$.  Then $\mathbb{N}_T, w_\alpha \Vdash \phi(\overline{n})$ and $\mathbb{N}_T, w_\alpha \not \Vdash \psi(\overline{n})$ for some $\alpha \leq \Theta$.  If $\alpha < \Theta$ then we're done. Suppose $\alpha = \Theta$.  Then, by the induction hypothesis: $\mathfrak{M}_\Theta^T, w_\Theta \Vdash \phi(\overline{n})$ and $\mathfrak{M}_\Theta^T, w_\Theta \not \Vdash \psi(\overline{n})$.  So $\mathfrak{M}_{\Theta + 1}^T, w_{\Theta + 1} \not \Vdash (\phi \rightarrow \psi)(\overline{n})$. But then $\mathfrak{M}_\Theta^T, w_\Theta \not \Vdash (\phi \rightarrow \psi)(\overline{n})$.
\end{proof}

\noindent Note that $(\mathbb{N}_T, w_\Theta)$ does not validate all sentences which are true on the classical standard model of arithmetic. For example, due to the fact that $w_0$ is a dead-end, $\mathbb{N}_T, w_\Theta \not \Vdash (0 = 0 \rightarrow \bot) \rightarrow \bot$.

\begin{theorem}[$\omega$-Consistency] (1) If $\textsf{NT} \vdash \phi(\dot{n})$ for all $n$ then $\textsf{NT} \cup \{\forall v \phi\} \not \vdash \bot$, (2) if $\textsf{NT} \vdash \exists v \phi$ then $\textsf{NT} \cup \{\phi(\dot{n})\} \not \vdash \bot$ for some $n$.
\end{theorem}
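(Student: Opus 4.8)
The plan is to leverage the standard model $\mathbb{N}_T$ constructed in the preceding lemmas, which satisfies $\mathbb{N}_T, w_\Theta \Vdash \textsf{NT}$ at the reflexive world $w_\Theta$. The key observation is that $\omega$-consistency, as formulated in the two clauses, is precisely a statement about failure of explosion, which by Soundness reduces to the existence of a reflexive world satisfying the relevant set of sentences. So the whole theorem follows by transferring the syntactic premises to semantic facts about $\mathbb{N}_T$, using the fact that the domain of $\mathbb{N}_T$ is $\omega$ with each $n$ named by its numeral $\dot{n}$.

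First I would handle clause (1). Suppose $\textsf{NT} \vdash \phi(\dot n)$ for every $n$. By Soundness, $\textsf{NT} \models \phi(\dot n)$ for every $n$, and since $\mathbb{N}_T, w_\Theta \Vdash \textsf{NT}$ with $w_\Theta$ reflexive, we get $\mathbb{N}_T, w_\Theta \Vdash \phi(\dot n)$ for every $n$. Because $dom(\mathbb{N}_T) = \omega$ and every element $n$ is denoted by the numeral $\dot n$, this means $\mathbb{N}_T, w_\Theta \Vdash \phi(a)$ for every $a$ in the domain, so by the satisfaction clause for $\forall$, $\mathbb{N}_T, w_\Theta \Vdash \forall v \phi$. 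Hence $\mathbb{N}_T, w_\Theta \Vdash \textsf{NT} \cup \{\forall v \phi\}$ at a reflexive world, while $\mathbb{N}_T, w_\Theta \not\Vdash \bot$. By Soundness (contrapositive), $\textsf{NT} \cup \{\forall v \phi\} \not\vdash \bot$.

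Next I would handle clause (2), which is the slightly more delicate direction. Suppose $\textsf{NT} \vdash \exists v \phi$. By Soundness, $\mathbb{N}_T, w_\Theta \Vdash \exists v \phi$, so by the satisfaction clause for $\exists$ there is some $n \in \omega$ with $\mathbb{N}_T, w_\Theta \Vdash \phi(n)$, i.e.\ $\mathbb{N}_T, w_\Theta \Vdash \phi(\dot n)$ for this particular witness $n$. Then $\mathbb{N}_T, w_\Theta \Vdash \textsf{NT} \cup \{\phi(\dot n)\}$ at the reflexive world $w_\Theta$, and again $\mathbb{N}_T, w_\Theta \not\Vdash \bot$, so by Soundness $\textsf{NT} \cup \{\phi(\dot n)\} \not\vdash \bot$. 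The main subtlety throughout is ensuring the numeral-to-element correspondence is airtight: one must invoke the standardness of $\mathbb{N}_T$ (that its $\mathcal{L}_\mathbb{N}$-reduct has domain $\omega$ with $|0| = 0$ and successor interpreted correctly) so that $|\dot n| = n$, which licenses moving between the quantifier clauses over $\omega$ and the syntactic provability of instances. No genuine obstacle remains beyond this bookkeeping, since all the heavy lifting has already been done in constructing $\mathbb{N}_T$ and verifying it models $\textsf{NT}$.
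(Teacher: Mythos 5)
Your proof is correct and is exactly the argument the paper compresses into ``Immediate from soundness and Standard Model'': you apply Soundness in both directions around the reflexive world $w_\Theta$ of $\mathbb{N}_T$, using standardness to identify numerals with domain elements. No differences of substance.
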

\begin{proof} Immediate from soundness and Standard Model.
\end{proof}

\begin{corollary}[Non-Triviality] $\textsf{NT} \not \vdash \bot$.
\end{corollary}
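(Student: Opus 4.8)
The plan is to read off non-triviality directly from the Standard Model lemma together with soundness, so that essentially no new work is required. First I would recall that the Standard Model lemma supplies an explicit $\mathcal{L}_T$-model $\mathbb{N}_T$ together with the world $w_\Theta$ such that $\mathbb{N}_T, w_\Theta \Vdash \textsf{NT}$. The crucial point to flag is that $w_\Theta$ is \emph{reflexive}: by construction, $\mathbb{N}_T$ is obtained from $\mathfrak{M}_\Theta^T$ by adding a loop at $w_\Theta$, so $w_\Theta \prec w_\Theta$ and the world is therefore an admissible counterexample site for the consequence relation $\models$.

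With this in hand, I would argue by contradiction. Suppose $\textsf{NT} \vdash \bot$. By the Soundness corollary, $\textsf{NT} \models \bot$, which by definition means that at every reflexive world of every $\mathcal{L}$-model, forcing $\textsf{NT}$ forces $\bot$. Instantiating this at the reflexive world $w_\Theta$ of $\mathbb{N}_T$, and using $\mathbb{N}_T, w_\Theta \Vdash \textsf{NT}$, we obtain $\mathbb{N}_T, w_\Theta \Vdash \bot$. But the satisfaction clause for $\bot$ stipulates $w \not\Vdash \bot(\overline{a})$ at every world of every model, so in particular $\mathbb{N}_T, w_\Theta \not\Vdash \bot$. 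This contradiction shows $\textsf{NT} \not\vdash \bot$.

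There is no real obstacle here, since all the substantive content has already been established: the transfinite positive Brady construction of $\mathbb{N}_T$, the verification (via Closure and the irreflexivity arguments of Monotonicity and Globally Decreasing) that $\mathbb{N}_T, w_\Theta \Vdash \textsf{NT}$, and the soundness of $\mathcal{N}\textsf{BQL}_\textsf{CD}^r$. The only point demanding a moment's care is confirming that the looped world $w_\Theta$ qualifies as a reflexive world for $\models$, which is exactly why the loop was added at the end of the construction. Alternatively, one could derive the corollary from clause (2) of the $\omega$-Consistency theorem by noting $\textsf{NT} \vdash \exists v\, v = v$ (via A1, $\forall$-Elim and $\exists$-Int) and observing that $\textsf{NT} \vdash \bot$ would force $\textsf{NT} \cup \{\dot n = \dot n\} \vdash \bot$ for every $n$, contradicting (2); but the direct appeal to Standard Model and soundness is cleaner and I would present that.
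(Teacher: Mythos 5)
Your argument is correct and is exactly the route the paper intends: the corollary is read off from Soundness together with the Standard Model lemma (the paper proves the adjacent $\omega$-Consistency theorem as ``immediate from soundness and Standard Model'' and leaves Non-Triviality to the same observation), and your attention to the reflexivity of $w_\Theta$ is the one point that genuinely needs checking. Nothing further is required.
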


\noindent It also follows from the existence of the standard model that (i) the theory obtained by simultaneously closing $\textsf{NT}$ under both $\textsf{BQL}_\textsf{CD}^r$ and the $\omega$-rule is non-trivial and (ii) if $\textsf{NT} \vdash \phi(\dot{n})$ for all $n$ then $\textsf{NT} \not \vdash \exists v (\phi \rightarrow \bot)$ (the latter property is called ``strong $\omega$-consistency" in Bacon (2013b)).
\end{document}